\documentclass[letterpaper,11pt]{article}
\usepackage[utf8]{inputenc}
\usepackage[in]{fullpage}
\usepackage{amsmath,amsthm,thmtools,amssymb}
\usepackage{color}
\usepackage{hyperref}
\usepackage{cleveref}
\usepackage{graphicx}
\usepackage{xspace}
\usepackage{comment}

\graphicspath{{././figure/}}%

\declaretheorem[style=plain,numberwithin=section,name=Theorem]{theorem}
\declaretheorem[style=plain,sibling=theorem,name=Lemma]{lemma}

\declaretheorem[style=plain,sibling=theorem,name=Corollary]{corollary}
\declaretheorem[style=plain,numbered=no,name=Claim]{claim}
\declaretheorem[style=plain,numbered=no,name=Subclaim]{subclaim}

\declaretheorem[style=definition,sibling=theorem,name=Example,qed=$\blacksquare$]{example}

\declaretheorem[style=definition,sibling=theorem,name=Remark,qed=$\blacksquare$]{remark}

\newcommand{\Ker}{\operatorname{Ker}}
\renewcommand{\Im}{\operatorname{Im}}
\newcommand{\Lk}{\operatorname{Lk}}
\newcommand{\St}{\operatorname{St}}

\newcommand{\NS}{\operatorname{NS}}
\newcommand{\vol}{\operatorname{vol}}

\newcommand{\Z}{\mathbb{Z}}
\newcommand{\R}{\mathcal{R}}
\newcommand{\C}{\mathcal{C}}
\newcommand{\calL}{\mathcal{L}}
\newcommand{\F}{\mathcal{F}}

\newcommand{\kcolpath}{{\sc $k$-Recoloring}\xspace}

\newcommand{\fourrecolor}{{\sc $4$-Recoloring}\xspace}
\newcommand{\fiverecolor}{{\sc $5$-Recoloring}\xspace}
\newcommand{\sixrecolor}{{\sc $6$-Recoloring}\xspace}
\newcommand{\krecolor}{{\sc $k$-Recoloring}\xspace}
\newcommand{\kplusrecolor}{{\sc $(k+1)$-Recoloring}\xspace}

\newcommand{\Lrecolor}{{\sc List-Recoloring}\xspace}
\newcommand{\conn}{{\sc Connectedness of $k$-Coloring Reconfiguration Graph}\xspace}
\newcommand{\fourconn}{{\sc Connectedness of $4$-Coloring Reconfiguration Graph}\xspace}

\title{Reconfiguration of colorings in triangulations of the sphere}
\author{
Takehiro Ito%
\thanks{Graduate School of Information Sciences, Tohoku University, Japan. {\tt takehiro@tohoku.ac.jp}}
\and 
Yuni Iwamasa%
\thanks{Graduate School of Informatics, Kyoto University, Japan. {\tt iwamasa@i.kyoto-u.ac.jp}}
\and
Yusuke Kobayashi%
\thanks{Research Institute for Mathematical Sciences, Kyoto University, Japan. {\tt yusuke@kurims.kyoto-u.ac.jp}}
\and 
Shun-ichi Maezawa%
\thanks{Department of Mathematics, Tokyo University of Science, Japan. {\tt maezawa.mw@gmail.com}}
\and
Yuta Nozaki%
\thanks{Graduate School of Advanced Science and Engineering, Hiroshima University, Japan. {\tt nozakiy@hiroshima-u.ac.jp}}
\and 
Yoshio Okamoto%
\thanks{Graduate School of Informatics and Engineering, The University of Electro-Communications, Japan. {\tt okamotoy@uec.ac.jp}}
\and
Kenta Ozeki%
\thanks{Faculty of Environment and Information Sciences, Yokohama National University, Japan. {\tt ozeki-kenta-xr@ynu.ac.jp}}
}
\date{\today}

\begin{document}

\maketitle

\begin{abstract}
In 1973, Fisk proved that
any $4$-coloring of a $3$-colorable triangulation of the $2$-sphere
can be obtained from any $3$-coloring by a sequence of Kempe-changes.
On the other hand,
in the case where we are only allowed to recolor a single vertex in each step,
which is a special case of a Kempe-change,
there exists a $4$-coloring that cannot be obtained from any $3$-coloring.
In this paper,
we present a
characterization of a $4$-coloring of a $3$-colorable triangulation of the $2$-sphere
that can be obtained from a $3$-coloring by a sequence of recoloring operations at single vertices,
and a
criterion for a $3$-colorable triangulation
of the $2$-sphere that all $4$-colorings can be obtained from a $3$-coloring by such a sequence.
Moreover, our first result can be generalized to a high-dimensional case,
in which ``$4$-coloring,'' ``$3$-colorable,'' and ``$2$-sphere'' above
are replaced with ``$k$-coloring,'' ``$(k-1)$-colorable,'' and ``$(k-2)$-sphere'' for $k \geq 4$,
respectively.
In addition, we show that the problem of deciding whether, for given two $(k+1)$-colorings,
one can be obtained from the other by such a sequence
is PSPACE-complete for any fixed $k \geq 4$.
Our results above can be rephrased as new results on the computational problems named {\sc $k$-Recoloring} and {\sc Connectedness of $k$-Coloring Reconfiguration Graph},
which are fundamental problems in the field of combinatorial reconfiguration.
\end{abstract}

\section{Introduction}\label{sec:intro}
In 1973, Fisk~\cite{Fis73I} proved that
all $4$-colorings of a $3$-colorable triangulation of the $2$-sphere are \emph{Kempe-equivalent},
that is,
for
any two $4$-colorings of the graph,
one is obtained from the other by a sequence of \emph{Kempe-changes}.
The method of Kempe-changes is known as a powerful tool 
for coloring of graphs (see e.g.,~\cite{Heawood,CR15}),
and has been intensively studied in graph theory (see e.g.,~\cite{Meyniel,LM,Mohar,MoharSalas2009, FJP,BBFJ,Feghali,BDL}).
In particular,
Mohar~\cite{Mohar} proved that
all $4$-colorings of a $3$-colorable planar graph are Kempe-equivalent
using Fisk's result,
and then Feghali~\cite{Feghali} improved this 
for $4$-critical planar graphs.
Mohar and Salas \cite{MoharSalas2009} extended Fisk's result
to toroidal triangulations.
As in those researches, 
Fisk's result is a fundamental one
in this context.

The formal definitions of Kempe-change and Kempe-equivalence are given as follows.
Let $\alpha \colon V(G) \to \{0,1,\dots,k-1\}$ be a $k$-coloring of a graph $G$,
let $a, b$ be two distinct colors in $\{0,1,\dots,k-1\}$,
and let $C$ be a connected component of 
the subgraph of $G$ induced by the vertices
colored with either $a$ or $b$.
Then, a \emph{Kempe-change} of $\alpha$ (at $C$)
is an operation 
to give rise to a new $k$-coloring
by exchanging the colors $a$ and $b$ on all vertices in $C$.
In particular, if $C$ consists of a single vertex,
then we refer to such a Kempe-change at $C$ as a \emph{single-change}.
Two $k$-colorings
of $G$
are \emph{Kempe-equivalent}
if one is obtained from the other 
by a sequence of Kempe-changes,
and \emph{single-equivalent}
if one is obtained from the other 
by a sequence of single-changes.

Let us return to Fisk's result for the Kempe-equivalence.
Let $G$ be a $3$-colorable triangulation of the 2-sphere.
The proof consists of the following two statements: All $3$-colorings of $G$ are Kempe-equivalent
under $4$-colorings,
and any two $4$-coloring of $G$ is Kempe-equivalent to a $3$-coloring.
Here, a $3$-coloring means that a coloring uses only three colors in $\{0,1,2,3\}$.
The first statement, which is a folklore, can be easily obtained as follows.
Since $G$ is a $3$-colorable triangulation of the 2-sphere,
for any two $3$-colorings $\alpha, \beta$ of $G$
there uniquely exists a permutation $\pi$ on $\{0,1,2,3\}$ such that $\beta = \pi \circ \alpha$.
Then, according to $\pi$, we can obtain $\beta$ from $\alpha$ by a sequence of Kempe-changes (under $4$-colorings) each of which changes a color at only one vertex, namely, a sequence of single-changes, by using the fourth color not appearing in $\alpha$.
Therefore, the nontrivial and crucial part in Fisk's result is to show the second statement.

The above observation for the first statement says that
all $3$-colorings of $G$ are single-equivalent
under $4$-colorings.
On the other hand, in general, some $4$-coloring is not single-equivalent to any of $3$-colorings; see Figure~\ref{fig:unbalanced 4-coloring} for example.
\begin{figure}
\centering
\includegraphics{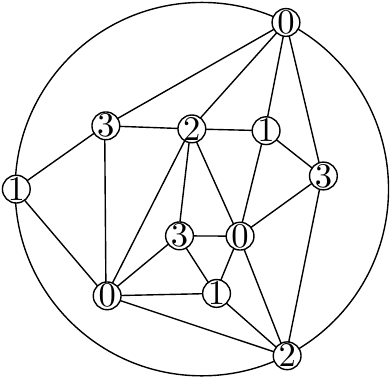}
\caption{A $4$-coloring of a $3$-colorable triangulation of the 2-sphere
such that it is not single-equivalent to any of $3$-colorings; no vertex can be recolored by a single-change.}
\label{fig:unbalanced 4-coloring}
\end{figure}
Here natural questions arise: \emph{What $4$-colorings are single-equivalent to some $3$-coloring?}~and \emph{in what $3$-colorable triangulation of the $2$-sphere all $4$-colorings are single-equivalent?}

In this paper, we resolve these questions in the following sense.
\begin{enumerate}
    \item We present a characterization
    for a $4$-coloring of $G$
to be single-equivalent to some $3$-coloring (\Cref{thm:main}).
In addition, we show that, for any $4$-colorings $\alpha, \beta$ of $G$ single-equivalent to some $3$-coloring,
there exists a sequence of single-changes of length $O(\#V(G)^2)$ from $\alpha$ to $\beta$ (\Cref{thm:diameter}).
    \item We provide a criterion
for a $3$-colorable triangulation of the $2$-sphere
that all $4$-colorings are single-equivalent (\Cref{thm:conn}).
\end{enumerate}
Furthermore, we consider a triangulation of a high-dimensional sphere.
Let $G$ be a $(k-1)$-colorable triangulation of the $(k-2)$-sphere for some positive integer $k \geq 4$.
Then, by the same argument as in the case of $k = 4$ above,
all $(k-1)$-colorings of $G$ are single-equivalent under $k$-colorings.
The following is a generalization of our first results (\Cref{thm:main} and \Cref{thm:diameter}):
\begin{enumerate}
\setcounter{enumi}{2}
    \item
    We present a characterization
for a $k$-coloring of a $(k-1)$-colorable triangulation $G$ of the $(k-2)$-sphere
to be single-equivalent to some $(k-1)$-coloring (\Cref{thm:highdim_case}).
In addition, we show that, for any $k$-colorings $\alpha, \beta$ of $G$ single-equivalent to some $(k-1)$-coloring,
there exists a sequence of single-changes of length $O(\#V(G)^{2\lfloor (k-1)/2 \rfloor})$ from $\alpha$ to $\beta$ (\Cref{thm:diam_high}).
\end{enumerate}
In fact, the third result can be further generalized to $(k-1)$-colorable triangulations of connected closed $(k-2)$-manifolds satisfying a certain condition.

Our results are deeply related to the computational problems named \krecolor and \conn,
which ask the connectedness of a $k$-coloring reconfiguration graph.
Here, the \emph{$k$-coloring reconfiguration graph} of a $k$-colorable graph $G$,
denoted by $\R_k(G)$, is a graph such that
its vertex set consists of all $k$-colorings of $G$
and there is an edge between two $k$-colorings $\alpha$ and $\beta$ of $G$
if and only if $\beta$ is obtained from $\alpha$ by recoloring only a single vertex in $G$, i.e., by a single-change.
Thus,
two $k$-colorings of $G$ are single-equivalent
if and only if
they are connected in $\R_k(G)$.
Then \krecolor and \conn are defined as follows.
\begin{description}
        \item[\underline{\krecolor}]
    	\item[Input:] A $k$-colorable graph $G$ and $k$-colorings $\alpha$ and $\beta$ of $G$.
    	\item[Output:] YES if $\alpha$ and $\beta$ are connected in $\R_k(G)$, and NO otherwise.
\end{description}
\begin{description}
        \item[\underline{\conn}]
    	\item[Input:] A $k$-colorable graph $G$.
    	\item[Output:] YES if $\R_k(G)$ is connected, and NO otherwise.
\end{description}

The problems \krecolor and \conn are fundamental in the recently emerging field of \emph{combinatorial reconfiguration} (see \cite{Heuvel,Nishimura} for surveys),
which are extensively studied.
It is shown that
\kcolpath is polynomial-time solvable if $k \leq 3$ \cite{CHJ11},
while PSPACE-complete if $k \geq 4$ \cite{CHJ}.
According to \cite[Section 3.2]{Heuvel},
the situation is very different from that for Kempe-equivalence,
whose complexity is widely open.
Bonsma and Cereceda~\cite{bonsma} considered \krecolor for (bipartite) planar graphs;
\krecolor for planar graphs is PSPACE-complete if $4 \leq k \leq 6$
and that for bipartite planar graphs is PSPACE-complete if $k = 4$.
Cereceda, van den Heuvel, and Johnson~\cite{CHJ} showed that $\mathcal{R}_k(G)$ is connected
for any $(k-2)$-degenerate graph~\cite{CHJ}.
By combining it with the fact that any planar graph is $5$-degenerate
and any bipartite planar graph is $3$-degenerate,
we see that 
\kcolpath and \conn are in P (all instances are YES-instances) for any planar graph with $k \geq 7$
and for any bipartite planar graph with $k \geq 5$.
In another paper~\cite{CHJ09}, Cereceda, van den Heuvel, and Johnson also showed that
{\sc Connectedness of $3$-Coloring Reconfiguration Graph} is coNP-complete in general
and is in P for bipartite planar graphs.

The problem \conn is also fundamental in the studies of the Glauber dynamics (a class of Markov chains) for $k$-colorings of a graph, which are used for random sampling and approximate counting.
In each step of the Glauber dynamics of $k$-colorings, we are given a $k$-coloring of a graph.
Then, we pick a vertex $v$ and a color $c$ uniformly at random, and change the color of $v$ to $c$ when the neighbors of $v$ are not colored by $c$.
Hence, one step of this Markov chain is exactly a single-exchange as long as we move to another coloring, and the state space is identical to the $k$-coloring reconfiguration graph.
The connectedness of the $k$-coloring reconfiguration graph ensures the Markov chain to be irreducible.
For the Glauber dynamics, the mixing property is one of the main concerns.
It is an open question whether the Glauber dynamics of $k$-colorings has polynomial mixing time when $k \geq \Delta + 2$, where $\Delta$ is the maximum degree of a graph \cite{DBLP:journals/rsa/Jerrum95}.
From continuing work in the literature, we know that the Glauber dynamics mixes fast when $k > 2\Delta$ \cite{DBLP:journals/rsa/Jerrum95}, 
$k > \frac{6}{11}\Delta$ \cite{doi:10.1063/1.533196}, and finally $k > (\frac{6}{11}-\varepsilon)\Delta$ for a small absolute constant $\varepsilon > 0$ \cite{DBLP:conf/soda/ChenDMPP19}.
Results on restricted classes of graphs have also been known.
For example, Hayes, Vera and Vigoda~\cite{DBLP:journals/rsa/HayesVV15} proved that the Glauber dynamics mixes fast for planar graphs when $k = \Omega(\Delta/\log \Delta)$.

Our proofs provide algorithms for special cases of \krecolor and \conn.
Here, we are supposed to be given a simplical complex $K$ whose geometric realization is homeomorphic to the $(k-2)$-sphere such that its $1$-skeleton $G$ is $(k-1)$-colorable.
As we have seen, all $(k-1)$-colorings of $G$ belong to the same connected component of $\mathcal{R}_k(G)$;
we refer to it as the \emph{$(k-1)$-coloring component} of $\mathcal{R}_k(G)$.
Our third result (including the first) implies that,
provided one of the input $k$-colorings $\alpha$ and $\beta$ belongs to the $(k-1)$-coloring component of $\mathcal{R}_k(G)$,
the problem \krecolor for $G$
can be solved in linear time in the size $\# K$ of the input simplical complex $K$.
In particular, if $k$ is fixed,
then our result says that it can be solved in polynomial time in $\# V(G)$.
Our second result implies that
\fourconn for a $3$-colorable triangulation of the $2$-sphere
can be solved in linear time in $\# V(G)$.

We further investigate the computational complexity of the recoloring problem for a $(k-1)$-colorable triangulation $G$ of the $(k-2)$-sphere.
It is still open whether \krecolor for $G$ can be solved in polynomial time,
although we prove the polynomial-time solvability of the special case where one of the input $k$-colorings $\alpha$ and $\beta$ belongs to the $(k-1)$-coloring component of $\mathcal{R}_k(G)$.
In this paper, we additionally show that, if the number of colors which we can use increases by one,
then it is difficult to check the single-equivalence between given two colorings:
\begin{enumerate}
\setcounter{enumi}{3}
\item For any fixed $k \geq 4$, the problem \kplusrecolor is PSPACE-complete
for $(k-1)$-colorable triangulations of the $(k-2)$-sphere (\Cref{thm:pspace}).
\end{enumerate}
In the case of $k=4$,
our result is stronger than
the PSPACE-completeness of \fiverecolor for planar graphs, which is known in the literature~\cite{bonsma}.

We here emphasize that, for our algorithmic results, we are given a triangulation of a sphere, but not only its $1$-skeleton.
This assumption is justified by the following reasons.
For each fixed $d \geq 5$, the sphere recognition problem is undecidable \cite{VKF,CHERNAVSKY2006325}: Namely it is undecidable whether a given simplicial complex is a triangulation of the $d$-sphere.
This implies that it is also undecidable whether a given graph is the $1$-skeleton of some triangulation of the $d$-sphere.
When $d=3$, the sphere recognition is decidable \cite{10.1007/978-3-0348-9078-6_54,thompson}, but not known to be solved in polynomial time (while it is known to be in NP \cite{schleimer});
the decidability is open when $d=4$.
Therefore, when $d \geq 3$, to filter out the intrinsic intractability of sphere recognition, we assume a triangulation is also given along with a graph. 
On the other hand, when $d=2$, we can decide whether a graph is the $1$-skeleton of some triangulation in linear time~\cite{DBLP:journals/jacm/HopcroftT74}.
In this case, the size of a triangulation is the same as the size of its $1$-skeleton in the order of magnitude by Euler's formula, and therefore, the assumption that a triangulation is also given is not relevant.

\paragraph{Organization.}
This paper is organized as follows.
In \Cref{sec:preliminaries},
we give several notations.
We provide a characterization on the $3$-coloring component of a $3$-colorable triangulation of the $2$-sphere in \Cref{subsec:2dim_case},
which answers the first question.
\Cref{subsec:highdim_case} deals with its high-dimensional generalization.
\Cref{sec:connR} is devoted to resolving the second question:
We present a criterion
for a $3$-colorable triangulation of the $2$-sphere
that any two $4$-colorings are single-equivalent in \Cref{sec:connR}.
In \Cref{sec:pspace}, we show the PSPACE-completeness of \kplusrecolor
for $(k-1)$-colorable triangulations of the $(k-2)$-sphere for $k \geq 4$.
\Cref{sec:conclusion} concludes this paper with several open questions.

\section{Preliminaries}
\label{sec:preliminaries}

For a set $A$,
we denote by $\# A$ the cardinality of $A$.

For a graph $G$,
its vertex set and edge set are denoted by $V(G)$ and $E(G)$, respectively.
For $v \in V(G)$,
we denote by $N_G(v)$ the set of neighbors of $v$
and by $\delta_G(v)$ the set of edges incident to $v$;
we simply write $N(v)$ and $\delta(v)$ if $G$ is clear from the context.
A map $\alpha \colon V(G) \to \{0,1,\dots,k-1\}$ is called a \emph{$k$-coloring}
if $\alpha(u) \neq \alpha(v)$ for each edge $\{u,v\} \in E(G)$.
A vertex $v \in V(G)$ is said to be \emph{recolorable} with respect to a $k$-coloring $\alpha$
if there is a $k$-coloring $\alpha'$ such that $\alpha'(u) = \alpha(u)$ for $u \in V \setminus \{v\}$ and $\alpha'(v) \neq \alpha(v)$, i.e.,
we can change the color $\alpha(v)$ of $v$.

Let $S^d$ denote the $d$-sphere.
A \emph{triangulation} of $S^d$ is a pair of a simplicial complex $K$ and a homeomorphism $h\colon |K|\to S^d$, where $|K|$ denotes the geometric realization of $K$.
See, for instance, Munkres~\cite{Mun84} for fundamental terminology in simplicial complexes.
Throughout this paper, we identify $|K|$ with $S^d$ and omit to write $h$.
For a simplex $\sigma \in K$, its \emph{star complex} $\St_K(\sigma)$ and \emph{link complex} $\Lk_K(\sigma)$ are defined by
\begin{align*}
    \St_K(\sigma)&:=\{\tau\in K\mid \text{$\sigma$ and $\tau$ are faces of a common simplex in $K$}\}, \\
    \Lk_K(\sigma)&:=\{\tau\in K\mid \sigma\cap\tau=\emptyset,\ \sigma\ast\tau \in K\},
\end{align*}
where $\sigma\ast\tau$ denotes the join of $\sigma$ and $\tau$ (see \cite[Section~62]{Mun84}).
Figure~\ref{fig:starlink_example1} shows examples.
\begin{figure}
\centering
\includegraphics[width=\textwidth]{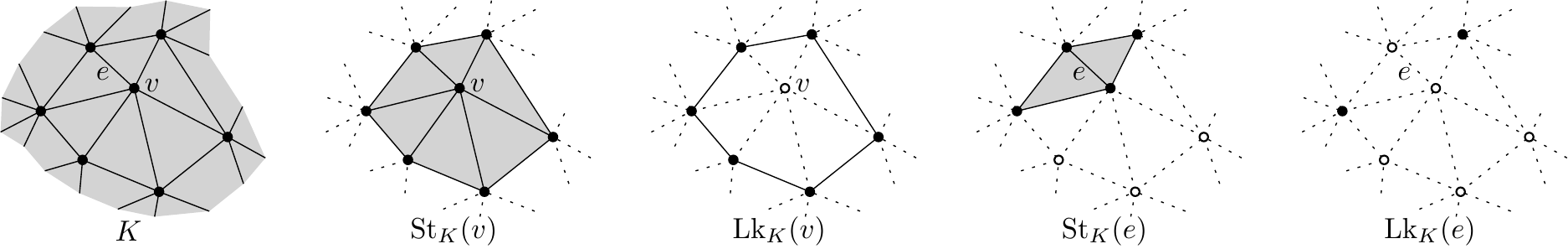}
\caption{An example of the star complexes and the link complexes of a $2$-dimensional simplicial complex $K$.}
\label{fig:starlink_example1}
\end{figure}
Also, let $\St^d_K(\sigma)$ denote the $d$-simplices in $\St_K(\sigma)$.
For a subset $K' \subseteq K$, we define $|K'|\subseteq S^d$ by $|K'|:=\bigcup_{\sigma\in K'}\sigma$.
For instance, if $v$ is a vertex of a triangulation of a surface without boundary, then $|\St_K(v)|$ and $|\Lk_K(v)|$ are homeomorphic to a closed disk and a circle, respectively.
In this paper, we specify a triangulation by an embedded graph $G$ in $S^d$, which is actually the $1$-skeleton of a triangulation $K$.
Also, we suppose that the input of \krecolor and \conn is the simplical complex $K$;
for example, we are given the set of faces of a triangulation of the $2$-sphere.
We use $\St_G(\sigma)$ instead of $\St_K(\sigma)$ by abuse of notation.
For example, $\St^0_G(v)\setminus\{v\}=N_G(v)$ and $\St^1_G(v)\setminus\Lk_G(v)=\delta_G(v)$.
Also, we simply write $\St(\sigma)$ and $\Lk(\sigma)$ if $G$ or $K$ is clear from the context.

It is well-known that 
a triangulation of the $2$-sphere
is $3$-colorable if and only if 
every vertex has an even degree (i.e., Eulerian).
In this sense, a $3$-colorable triangulation is said to be \emph{even}.
More generally, a triangulation $K$ of a closed $d$-manifold is \emph{even} if $\#\St^{d}(\sigma^{d-2})$ is even for every $(d-2)$-simplex $\sigma^{d-2}\in K$, where $d\geq 2$.
If the $1$-skeleton of $K$ is $(d+1)$-colorable, then $K$ is even.
By \cite[Sections~I.4 and VI.2]{Fis77}, the converse is also true for $S^{d}$, more generally, for simply-connected manifolds.
Hence, it is easy to check whether a given triangulation of $S^{d}$ is $(d+1)$-colorable.

\section{A characterization on the $(k-1)$-coloring component}
\label{sec:characterization}
In this section,
we resolve (a generalization of) the first question posed in Introduction: \emph{In a $(k-1)$-colorable triangulation $G$ of the $(k-2)$-sphere, what $k$-colorings are single-equivalent to some $(k-1)$-coloring?}
In \Cref{subsec:2dim_case},
we consider the two-dimensional case, i.e., $k = 4$;
we present a characterization for a $4$-coloring of a $3$-colorable triangulation $G$ of the $2$-sphere
to be single-equivalent to some $3$-coloring.
A characterization for high-dimensional cases ($k \geq 4$) can be obtained by a similar argument,
which is given in \Cref{subsec:highdim_case}.

Recall that all $(k-1)$-colorings of a $(k-1)$-colorable triangulation $G$ of the $(k-2)$-sphere belong to the same connected component of $\mathcal{R}_k(G)$;
we refer to it as the $(k-1)$-coloring component of $\mathcal{R}_k(G)$.
In this section, for a graph $G$,
its vertex set and edge set are simply denoted by $V$ and $E$, respectively.
For sets $A$ and $B$,
let $A \bigtriangleup B$ denote the symmetric difference $(A \setminus B) \cup (B \setminus A)$ of $A$ and $B$.
A set family $\F \subseteq 2^A$ is said to be \emph{laminar}
if, for any $X, Y \in \F$,
we have $X \subseteq Y$, $X \supseteq Y$, or $X \cap Y = \emptyset$.

\subsection{Two-dimensional case}
\label{subsec:2dim_case}
Let $G$ be a $3$-colorable triangulation of the $2$-sphere
and $F$ the set of faces of $G$.
We first define the signature on a face in $F$ with respect to a $4$-coloring of $G$
and its related concepts,
which were originally introduced in \cite{heawood1898four} (see also \cite[Section 8 of Chapter 2]{Saaty72}).
These play an important role in our characterization.

Let $\alpha \colon V \to \{0,1,2,3\}$ be a $4$-coloring of
$G$.
We assign a signature $+1$/$-1$ to each face $f \in F$ so that, for every pair of adjacent faces $f, f'$ with $f = \{ u, v, w \}$ and $f' = \{u', v, w\}$,
they have the same signature if and only if $\alpha(u) \neq \alpha(u')$,
where two faces $f, f' \in F$ are said to be \emph{adjacent}
if $f$ and $f'$ share an edge, i.e., $\#(f \cap f') = 2$.
Such an assignment can be obtained as follows.
For each face $f = \{u,v,w\} \in F$,
we denote by $[\alpha(f)]$ the cyclically ordered set $[\alpha(u)\alpha(v)\alpha(w)]$ on $\{\alpha(u), \alpha(v), \alpha(w)\}$,
where $u, v, w$ are arranged in counterclockwise order in $G$ if we see it from the outside of the $2$-sphere.
We define $\varepsilon_\alpha \colon F \to \{+1, -1\}$ by
\begin{align*}
    \varepsilon_\alpha(f) :=
    \begin{cases}
        +1 & \text{if $[\alpha(f)] \in \{ [123], - [023], [013], - [012]\}$},\\
        -1 & \text{if $[\alpha(f)] \in \{ -[123], [023], -[013], [012]\}$},
    \end{cases}
\end{align*}
where the minus sign $-$ indicates the opposite order, that is, $-[ijk]=[jik]$.
We note here that when we regard $[123], - [023], [013], - [012]$ as oriented $2$-simplices, they appear in the boundary of an oriented $3$-simplex $[0123]$: $\partial [0123] = [123] \cup - [023] \cup [013] \cup -[012]$.
A face $f \in F$ with $\varepsilon_\alpha(f) = +1$ (resp.\  $\varepsilon_\alpha(f) = -1$)
is called a \emph{$+$-face} (resp.\  \emph{$-$-face}) with respect to $\alpha$.
Figure~\ref{fig:signassignment_example1} shows an example.
\begin{figure}
\centering
\includegraphics{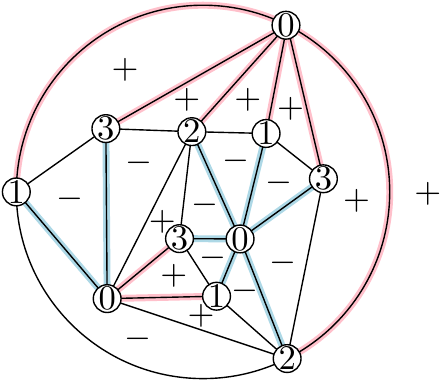}
\caption{An example of the signature assignment to the faces. Red edges depict the $+$-nonsingular edges and blue edges depict the $-$-nonsingular edges.}
\label{fig:signassignment_example1}
\end{figure}
Recall that, for $v \in V$, the set of faces containing $v$ is denoted as $\St^2(v)$.
For a $4$-coloring $\alpha$, let $\varepsilon_{\alpha, v}^{-1}(+1)$ (resp.\ $\varepsilon_{\alpha, v}^{-1}(-1)$) denote the set of $+$-faces (resp.\ $-$-faces) in $\St^2(v)$.

An edge $e \in E$ is said to be \emph{singular} with respect to $\alpha$
if the two adjacent faces $f, f' \in F$ sharing $e$ have different signatures, i.e.,
$\varepsilon_\alpha(f) \neq \varepsilon_\alpha(f')$,
and to be \emph{nonsingular}
if it is not singular.
A nonsingular edge is particularly said to be \emph{$+$-nonsingular} (resp.\ \emph{$-$-nonsingular})
if $\varepsilon_\alpha(f) = \varepsilon_\alpha(f') = +1$ (resp.\ $\varepsilon_\alpha(f) = \varepsilon_\alpha(f') = -1$).
Figure~\ref{fig:signassignment_example1} also illustrates the $+$- and $-$-nonsingular edges.
For $v \in V$,
we denote by $\NS_\alpha(v)$, $\NS_\alpha^+(v)$, and $\NS_\alpha^-(v)$ the set of nonsingular, $+$-nonsingular, and $-$-nonsingular edges incident to $v$,
respectively.
Also the set of nonsingular edges is denoted as $\NS_\alpha$.
The following are obtained by direct observations.
\begin{lemma}
\label{lemma:3-coloring}
Let $\alpha$ be any $4$-coloring of a $3$-colorable triangulation $G$ of the $2$-sphere.
\begin{itemize}
    \item[{\rm (1)}]
    A vertex $v \in V$ is recolorable with respect to $\alpha$ if and only if
all edges incident to $v$ are singular, i.e., $\NS_\alpha(v) = \emptyset$.
    \item[{\rm (2)}]
    The coloring $\alpha$ is a $3$-coloring if and only if all edges are singular, i.e., $\NS_\alpha = \emptyset$.
\end{itemize}
\end{lemma}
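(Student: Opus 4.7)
The key preliminary step will be to isolate a single reformulation of singularity: for any two adjacent faces $f = \{u,v,w\}$ and $f' = \{u',v,w\}$, a direct verification using the explicit formula for $\varepsilon_\alpha$ shows that $\varepsilon_\alpha(f) = \varepsilon_\alpha(f')$ if and only if $\alpha(u) \neq \alpha(u')$; equivalently, the shared edge $\{v,w\}$ is singular if and only if the two ``opposite'' vertices $u$ and $u'$ carry the same color. This check is carried out by going through the four possibilities for the color pair $\{\alpha(v),\alpha(w)\}$ and using that $-[ijk]=[jik]$. Once it is established, both parts of the lemma follow by purely local arguments.

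For (1), the link $\Lk(v)$ is a cycle on vertices $u_1, u_2, \dots, u_m$ in cyclic order, and the reformulation above says that the incident edge $\{v,u_i\}$ is singular iff $\alpha(u_{i-1}) = \alpha(u_{i+1})$ (indices modulo $m$). Hence $\NS_\alpha(v) = \emptyset$ iff the sequence of colors around the link has period dividing $2$. Combined with the fact that consecutive neighbors must receive different colors and that $m$ is even (as $G$ is $3$-colorable, every vertex has even degree), this in turn is equivalent to $N(v)$ being properly colored by strictly alternating two colors from $\{0,1,2,3\} \setminus \{\alpha(v)\}$. Such an alternation leaves at least one color of $\{0,1,2,3\}$ unused on $\{v\} \cup N(v)$, so $v$ is recolorable. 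Conversely, if $v$ is recolorable then $N(v)$ uses at most two colors, and being a proper coloring of an even cycle with at most two colors it must alternate, forcing every edge incident to $v$ to be singular.

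For (2), the direction ``$\alpha$ is a $3$-coloring $\Rightarrow$ all edges are singular'' is immediate: each face then uses the entire three-color palette, so for any shared edge $\{v,w\}$ the two opposite vertices must both take the unique palette color missing from $\{\alpha(v),\alpha(w)\}$, whence $\alpha(u) = \alpha(u')$. For the converse, I would fix any face $f_0$, let $c_0 \in \{0,1,2,3\}$ be a color absent from $f_0$, and propagate: by the reformulation, any face sharing an edge with $f_0$ uses the same three colors as $f_0$, and in particular misses $c_0$. Because the dual graph of a triangulation of $S^2$ is connected, induction along it shows that no face uses $c_0$, so $\alpha$ is a $3$-coloring.

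The only step that I expect to require genuine care is the initial reformulation of singularity via the explicit formula for $\varepsilon_\alpha$; once that identity is in hand, (1) reduces to a short analysis on the even cycle $\Lk(v)$ and (2) reduces to a connectedness argument on the dual graph of the triangulation.
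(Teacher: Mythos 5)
Your proposal is correct. The paper gives no proof of this lemma, asserting only that it follows from ``direct observations,'' and your argument is precisely the natural expansion of that assertion: reduce singularity of a shared edge to equality of the colors at the two opposite vertices, then in~(1) walk around the even cycle $\Lk(v)$ and in~(2) propagate along the connected dual graph. One small remark: the equivalence ``$\varepsilon_\alpha(f) = \varepsilon_\alpha(f')$ iff $\alpha(u) \neq \alpha(u')$'' that you propose to derive from the explicit formula is in fact how the paper \emph{defines} the sign assignment (the formula is then exhibited as a realization of it), so that step can be taken for free; your derivation from the formula is harmless but not needed.
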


We can derive a necessary condition for a $4$-coloring $\alpha$ of $G$ to belong to the $3$-coloring component of $\mathcal{R}_4(G)$ as follows.
Let $\alpha'$ be a $4$-coloring obtained from $\alpha$ by changing the color of $v$, i.e., $\alpha'(v) \neq \alpha(v)$ and $\alpha'(u) = \alpha(u)$ for all $u \in V \setminus \{v\}$.
Then the signatures of all faces in $\St^2(v)$ are inverted (see also Figure~\ref{fig:recolor}):
\begin{figure}
\centering
\includegraphics{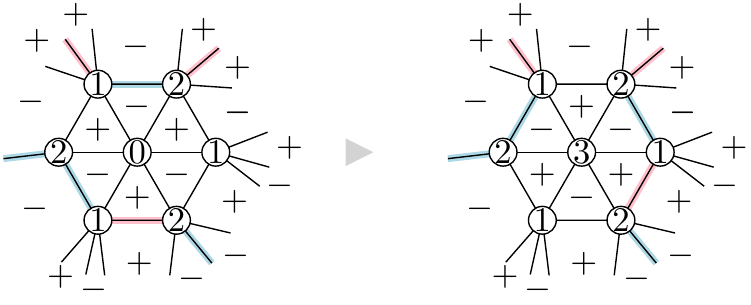}
\caption{An example of the change of the signatures by a single-change.
As in Figure~\ref{fig:signassignment_example1}, red and blue edges depict the $+$- and $-$-nonsingular edges, respectively.}
\label{fig:recolor}
\end{figure}
\begin{align}\label{eq:epsilon}
    \varepsilon_{\alpha'}(f) =
    \begin{cases}
        -\varepsilon_\alpha(f) & \text{if $f \in \St^2(v)$},\\
        \varepsilon_\alpha(f) & \text{if $f \notin \St^2(v)$}.
    \end{cases}
\end{align}
This implies that,
if $\alpha$ and $\alpha'$ belong to the same connected component in $\mathcal{R}_4(G)$,
then we have
\begin{align}\label{eq:necessary}
    \#\varepsilon_{\alpha, v}^{-1}(+1) = \#\varepsilon_{\alpha', v}^{-1}(+1) \quad
    \text{and} \quad
    \#\varepsilon_{\alpha, v}^{-1}(-1) = \#\varepsilon_{\alpha', v}^{-1}(-1)
\end{align}
for all $v \in V$.
Furthermore,
\Cref{lemma:3-coloring}~(2) implies that,
if $\alpha$ is a $3$-coloring of $G$,
then we have $\varepsilon_\alpha(f) \neq \varepsilon_\alpha(f')$ for each $v \in V$ and every adjacent $f, f' \in \St^2(v)$.
Therefore,
it follows from the equation~\eqref{eq:necessary} and \Cref{lemma:3-coloring}~(2) that the following \emph{balanced condition} holds
if $\alpha$ belongs to the $3$-coloring component of $\R_4(G)$:
\begin{description}
    \item[{\rm (B)}]
    For each $v \in V$,
    \begin{align}\label{eq:iff-condition}
        \#\varepsilon_{\alpha, v}^{-1}(+1) = \#\varepsilon_{\alpha, v}^{-1}(-1).
    \end{align}
\end{description}

Our main result in this subsection is showing that the balanced condition (B) is also a sufficient condition,
that is, the condition (B) characterizes the $3$-coloring component of $\mathcal{R}_4(G)$.
\begin{theorem}
\label{thm:main}
Let $\alpha \colon V \to \{0,1,2,3\}$ be a $4$-coloring of a $3$-colorable triangulation $G$ of the $2$-sphere.
Then, $\alpha$ belongs to the $3$-coloring component of $\mathcal{R}_4(G)$ if and only if
it satisfies
the balanced condition {\rm (B)}.
\end{theorem}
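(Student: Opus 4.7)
The necessity direction is already established in the discussion preceding the theorem: equation~\eqref{eq:epsilon} implies~\eqref{eq:necessary}, and any $3$-coloring of $G$ satisfies~\eqref{eq:iff-condition} by Lemma~\ref{lemma:3-coloring}(2) together with the evenness of $G$. The plan is therefore to prove the sufficiency direction.

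I would argue by induction on $\#\NS_\alpha$. The base case $\#\NS_\alpha=0$ is immediate from Lemma~\ref{lemma:3-coloring}(2): such an $\alpha$ is already a $3$-coloring. In the inductive step, with $\alpha$ satisfying condition~(B) and $\#\NS_\alpha>0$, the goal is to produce a sequence of single-changes leading to some $\alpha'$ with $\#\NS_{\alpha'}<\#\NS_\alpha$; note that condition~(B) is automatically preserved along the way by the necessity direction, so the inductive hypothesis then yields a sequence from $\alpha'$ to some $3$-coloring.

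The central local fact I would use is that a single-change at a recolorable vertex $v$ (which, by Lemma~\ref{lemma:3-coloring}(1), requires $\NS_\alpha(v)=\emptyset$) leaves every edge in $\St^1(v)\setminus\Lk(v)$ singular, toggles the singular/nonsingular status of each edge in $\Lk(v)$, and affects no edge outside $\St(v)$. Viewing $\varepsilon_\alpha$ as a $\pm 1$-labeling of the $2$-faces, the singular edges form a disjoint union of closed curves on $S^2$ separating the $+$-faces from the $-$-faces; by the Jordan curve theorem these curves bound a laminar family of disks in $S^2$. I would pick an innermost such disk $R$, whose interior is monochromatic (say, entirely $+$) and is bounded by a single singular cycle. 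Using condition~(B) at the vertices of $\partial R$ to constrain the cyclic sign pattern there, I would then produce -- possibly after a few preparatory single-changes that leave $\#\NS_\alpha$ unchanged -- a recolorable vertex $v$ on or inside $R$ whose link $\Lk(v)$ contains strictly more nonsingular than singular edges, so that recoloring $v$ strictly decreases $\#\NS_\alpha$.

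The main obstacle will be the last step: showing that in every balanced configuration with $\#\NS_\alpha>0$ one can actually find such a reducing sequence confined to a neighborhood of an innermost region. This demands a careful combinatorial analysis of the cyclic sequence of face signs around each vertex on $\partial R$, combined with a topological argument that exploits both the simple connectivity of $S^2$ and the innermost choice of $R$ to exclude ``stuck'' configurations in which every locally available single-change merely shuffles nonsingular edges without reducing their number.
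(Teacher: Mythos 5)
Your treatment of necessity is correct and matches the paper. For sufficiency, however, there is a genuine gap at precisely the point you flag as ``the main obstacle,'' and the difficulty is not merely technical: the potential function $\#\NS_\alpha$ that you propose to decrease does \emph{not} decrease under the natural single-change. To see this, note that by Lemma~\ref{lem:nonsingular} a single-change at a recolorable vertex $v$ replaces $\NS_\alpha$ by $\NS_\alpha \bigtriangleup \Lk(v)$, so $\#\NS_\alpha$ drops exactly when more than half of the edges of $\Lk(v)$ are nonsingular. But a recolorable vertex $v$ has $\NS_\alpha(v)=\emptyset$, i.e., the face signs alternate around $v$; in the typical ``stuck'' configuration such a vertex sits in a region where almost all link edges are singular and only a few are nonsingular, and recoloring it \emph{increases} $\#\NS_\alpha$. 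Indeed, the vertex $v_0$ that the paper's proof selects (inside an innermost closed trail of nonsingular edges) generally has mostly singular edges in $\Lk(v_0)$, so the paper's own move would increase your potential. Your innermost monochromatic region $R$ compounds the problem: every interior vertex of $R$ has all incident edges nonsingular (hence is \emph{not} recolorable), and boundary vertices of $R$ typically have nonsingular edges pointing into $R$'s interior, so they are not recolorable either. Locating a recolorable vertex ``on or inside $R$'' is not the easy part you hope it is; it is essentially the whole theorem.

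The paper circumvents exactly this obstruction by not working with $\#\NS_\alpha$ and not working with monochromatic regions. Instead it uses the balanced condition~(B) to construct an \emph{admissible NS-pairing} $\pi$, which decomposes the nonsingular edges into noncrossing closed trails $\C_\pi$ on which $\alpha$ restricts to a $2$-coloring (Lemma~\ref{lem:admissible}). The potential is then $\vol(\C_\pi)$, the total number of faces enclosed by these trails. Picking an \emph{innermost} trail $C$ in the laminar family $\calL_\pi$ guarantees that the triangle inside $C$ across any edge of $C$ has its third vertex $v_0$ lying strictly inside $C$ with $\NS_\alpha(v_0)=\emptyset$, hence recolorable, and the recoloring produces a new admissible pairing with $\vol$ reduced by exactly $\#\St^2(v_0)$. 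This potential decreases monotonically even when $\#\NS_\alpha$ increases, and it sidesteps the ``shuffling without reducing'' problem that your argument would need to rule out. In short, your plan has the right flavor (potential function plus innermost structure), but the specific potential and the specific geometric object are the wrong ones, and the missing key claim is precisely what the paper's admissible NS-pairing machinery was invented to prove.
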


For the proof of \Cref{thm:main},
we observe the behavior of $\NS_{\alpha}$ when we recolor a vertex
from a $4$-coloring $\alpha$.
If we change the color $\alpha(v)$ of a vertex $v$,
then 
it follows from the equation~\eqref{eq:epsilon} that
all singular edges in $\Lk(v)$ will be nonsingular
and vice versa (see Figure~\ref{fig:recolor}).
Thus, the following holds.
\begin{lemma}\label{lem:nonsingular}
Let 
$\alpha$ be a $4$-coloring of a $3$-colorable triangulation $G$ of the $2$-sphere and $\alpha'$ a $4$-coloring obtained from $\alpha$ by changing the color of a vertex $v$.
Then
\begin{align*}
    \NS_{\alpha'}(u) =
    \begin{cases}
        \NS_\alpha(u) & \text{if $u \notin N(v)$},\\
        \NS_\alpha(u) \bigtriangleup \left(\Lk(v) \cap \delta(u)\right) & \text{if $u \in N(v)$}.
    \end{cases}
\end{align*}
In particular, $\NS_{\alpha'} = \NS_{\alpha} \bigtriangleup \Lk(v)$.
\end{lemma}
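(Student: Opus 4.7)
The plan is to first prove the global statement $\NS_{\alpha'} = \NS_\alpha \bigtriangleup \Lk(v)$ and then deduce the local statement by intersecting both sides with the edge set $\delta(u)$ of each vertex $u$.

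For the global statement, I would fix an edge $e \in E$ together with its two incident faces $f, f' \in F$, and classify $e$ according to how many of these two faces lie in $\St^2(v)$. If neither $f$ nor $f'$ contains $v$, then by \eqref{eq:epsilon} we have $\varepsilon_{\alpha'}(f) = \varepsilon_\alpha(f)$ and $\varepsilon_{\alpha'}(f') = \varepsilon_\alpha(f')$, so the singularity status of $e$ is preserved. If exactly one of $f, f'$ contains $v$, then exactly one signature flips and the status inverts; this case occurs precisely when $e \in \Lk(v)$, because $e$ lies in $\Lk(v)$ if and only if $e$ together with $v$ spans a face of $G$. Finally, if both $f$ and $f'$ contain $v$---equivalently, $e \in \delta(v)$---then both signatures flip and the status is again preserved. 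Thus, the edges whose singular/nonsingular status toggles under the single-change at $v$ are exactly the edges of $\Lk(v)$, which yields $\NS_{\alpha'} = \NS_\alpha \bigtriangleup \Lk(v)$.

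The local statement then follows by intersecting both sides with $\delta(u)$. When $u \in N(v)$, this directly gives $\NS_{\alpha'}(u) = \NS_\alpha(u) \bigtriangleup (\Lk(v) \cap \delta(u))$. When $u \notin N(v)$ (which subsumes the case $u = v$, since $G$ is loopless), every edge of $\Lk(v)$ has both endpoints in $N(v)$, so $\Lk(v) \cap \delta(u) = \emptyset$ and we recover $\NS_{\alpha'}(u) = \NS_\alpha(u)$.

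I do not anticipate any serious obstacle, since the lemma is essentially bookkeeping for \eqref{eq:epsilon}. The only mild subtlety is ensuring that the three cases in the global argument exhaust all edges and correctly single out $\Lk(v)$ as the toggling set; this relies on the fact that $G$ is a triangulation of a closed $2$-manifold, so every edge is shared by exactly two faces.
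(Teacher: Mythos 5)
Your proposal is correct and follows essentially the same approach as the paper: the paper simply observes that by \eqref{eq:epsilon} the signatures of faces in $\St^2(v)$ flip while all others are preserved, and therefore the singular/nonsingular status of an edge toggles precisely when that edge lies in $\Lk(v)$. Your three-way case analysis on how many of the two faces incident to an edge contain $v$, together with the observation that intersection with $\delta(u)$ distributes over symmetric difference, is just a more explicit spelling-out of this same bookkeeping.
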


In our proof, the set $\NS_\alpha$ of nonsigular edges is viewed as the disjoint union of closed trails in $G$.
Here, a \emph{closed trail} is a closed walk such that all edges are distinct.
For a closed trail $C$ of $G$ and a vertex $v \in V$,
we denote by $C_v$ the set of subpaths of $C$ obtained from the restriction of $C$ to $\delta(v)$, i.e.,
$C_v := \{ \{e, e'\} \mid \text{$\{e, e'\}$ is a subpath of $C$ such that $e,e' \in \delta(v)$} \}$.
A closed trail $C$ of $G$ is said to be \emph{noncrossing}
if for any vertex $v$, no pair of subpaths $P, P' \in C_v$ crosses in $S^2$, i.e.,
$P'$ is contained in the closure of a connected component of $|\St(v)| \setminus P$ in $S^2$,
where $P$ is viewed as a curve in $S^2$.
For a noncrossing closed trail $C$ with fixed orientation,
we define ${\rm L}_C$ by the union of connected components of $S^2 \setminus C$ such that it lies in the left side of some edge in $C$.
Similarly, we define ${\rm R}_C$ by the union of connected components of $S^2 \setminus C$ such that it lies in the right side of some edge in $C$.
Since $C$ is noncrossing,
the family $\{ {\rm L}_C, {\rm R}_C \}$ forms a bipartition of $S^2 \setminus C$.

We fix a certain face $f_{\rm out} \in F$ as the \emph{outer face} of $G$.
We say that one of ${\rm L}_C$ and ${\rm R}_C$ is the \emph{outside} of $C$
if it contains the outer face $f_{\rm out}$.
The other is called the \emph{inside} of $C$.
Let $F_C \subseteq F$ denote the set of faces in the inside of $C$.
For a set $\C$ of noncrossing closed trails in $G$,
we define the \emph{volume} of $\C$, denoted as $\vol(\C)$, by
the sum of the number of faces contained in the inside of $C$ over all $C \in \C$, i.e.,
\begin{align*}
    \vol(\C) := \sum_{C \in \mathcal{C}}\# F_C.
\end{align*}

It is known that,
for any $4$-coloring $\alpha$ of $G$ and $v \in V$,
the number $\# \NS_\alpha(v)$ of nonsingular edges incident to $v$ is even (see e.g., \cite[Lemma 5]{Fis73I}).
For $v \in V$,
let $\pi_v$ be a partition of $\NS_\alpha(v)$ such that each member of $\pi_v$ is of size two (such a partition exists since $\# \NS_\alpha(v)$ is even),
and define $\pi := \bigcup_{v \in V} \pi_v$.
We refer to $\pi$ as an \emph{NS-pairing} (with respect to $\alpha$).
An NS-pairing $\pi = \bigcup_{v \in V} \pi_v$
uniquely determines a family $\C_\pi$ of closed trails in $G$
satisfying that all closed trails in $\C_\pi$ are disjoint
and $\pi_v = \bigcup_{C \in \C_\pi} C_v$ for all $v \in V$.
Note that $\NS_\alpha$ equals the disjoint union of all closed trails $C \in \C_\pi$.
\figurename~\ref{fig:ns_pairing1} provides an example of the set of closed trails induced by an NS-pairing.

\begin{figure}
    \centering
    \includegraphics[width=0.8\textwidth,page=2]{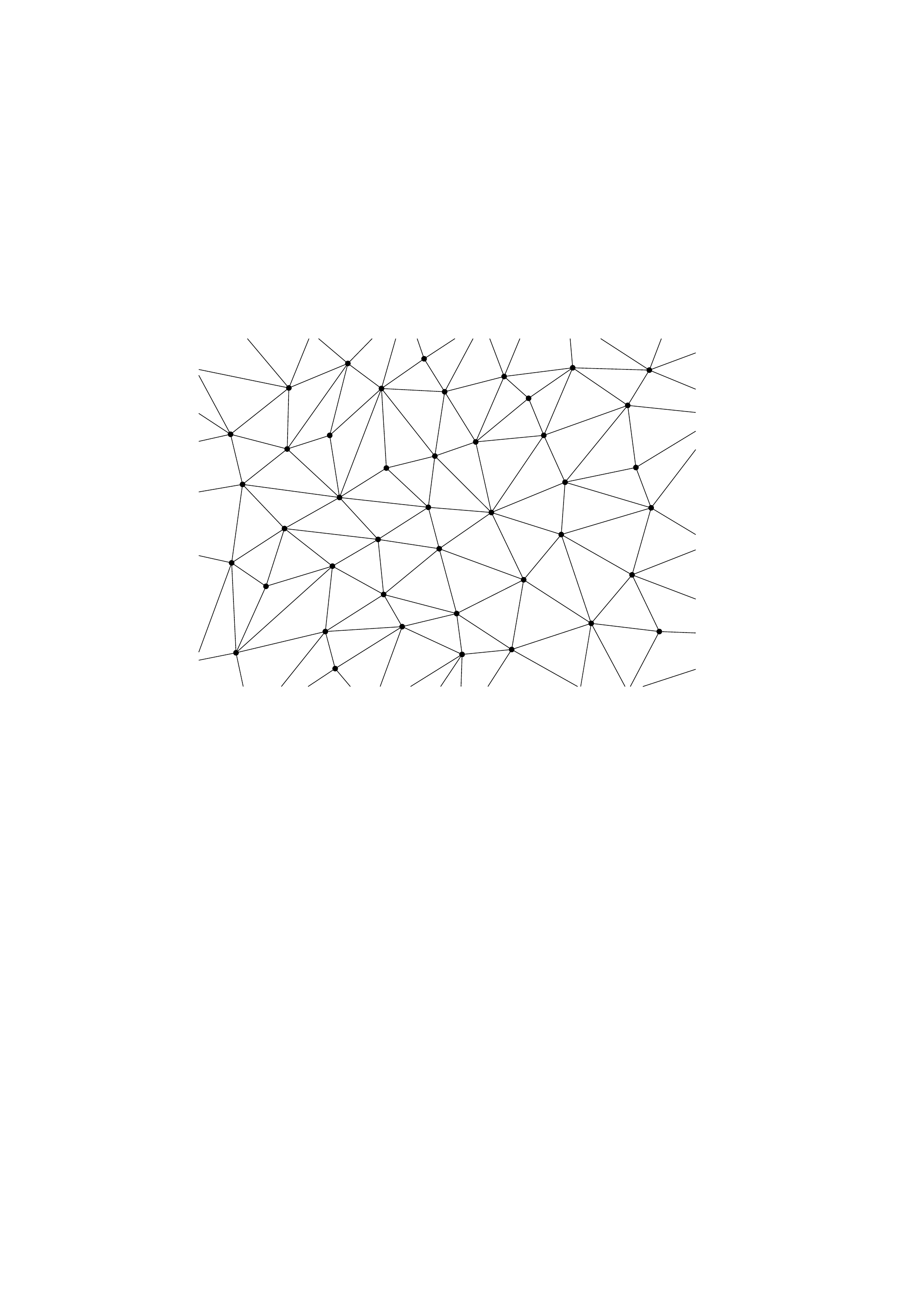}
    \caption{An example of NS-pairings. Colors show closed trails. Note that this NS-pairing is admissible. The gray areas show innermost closed trails.}
    \label{fig:ns_pairing1}
\end{figure}

An NS-pairing $\pi = \bigcup_{v \in V} \pi_v$ is said to be \emph{admissible}
if the following hold for any $v \in V$:
\begin{description}
    \item[{\rm (A1)}] All members of $\pi_v$ consist of one $+$-nonsingular edge and one $-$-nonsingular edge;
    \item[{\rm (A2)}] No pair $P, P' \in \pi_v$ crosses in $|\St(v)| \subseteq S^2$.
\end{description}
Let $\pi$ be an admissible NS-pairing.
Since each $C \in \C_\pi$ is noncrossing by (A2),
the inside of $C$, and hence $F_C$, are well-defined.
We define the face set family $\calL_\pi \subseteq 2^F$ by $\calL_\pi := \{ F_C \mid C \in \C_\pi \}$.

The admissibility of $\pi$ induces interesting properties on $\C_\pi$ and $\calL_\pi$ as follows.

\begin{lemma}\label{lem:admissible}
Let $\pi$ be an admissible NS-pairing with respect to a $4$-coloring $\alpha$.
\begin{itemize}
    \item[{\rm (1)}]
    The restriction of $\alpha$ to $C$ is a $2$-coloring.
    \item[{\rm (2)}]
    The family $\calL_\pi$ is laminar.
\end{itemize}
\end{lemma}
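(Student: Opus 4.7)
The plan is as follows. For part (1), fix $v\in V(C)$ and a pair $(k,\ell)\in\pi_v$ corresponding to the two edges of $C$ meeting at $v$, and aim to prove $\alpha(u_k)=\alpha(u_\ell)$, where $u_1,\ldots,u_p$ are the neighbors of $v$ in cyclic order along $\Lk(v)$. Granting this, traversing $C$ alternates between the color $\alpha(v)$ and this common neighbor-color at each vertex, so $\alpha|_{V(C)}$ uses only two colors. Set $c_i:=\alpha(u_i)$, an element of the three-element set $\{0,1,2,3\}\setminus\{\alpha(v)\}$, and endow this set with a cyclic order so that each step $c_{i-1}\mapsto c_i$ is either $+1$ or $-1$; record this as $D_i\in\{+1,-1\}$. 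Two immediate observations: position $i$ is nonsingular iff $D_i=D_{i+1}$, and at a nonsingular position the signature is determined by the common value of $D_i=D_{i+1}$, with one value giving a $+$-nonsingular edge and the other a $-$-nonsingular edge. Since $c_\ell-c_k\equiv\sum_{i=k+1}^{\ell}D_i\pmod 3$, the claim $c_k=c_\ell$ reduces to showing that this integer sum equals $0$.

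I would prove the sum identity by induction on $\ell-k$. For the base case $\ell-k=2$, condition (A2) forces the unique intermediate position $k+1$ to be singular, since otherwise its partner would lie inside $(k,\ell)$ and cross the pair $(k,\ell)$; hence $D_{k+2}=-D_{k+1}$ and the two-term sum vanishes. For the inductive step, (A2) forces the nonsingular positions strictly inside $(k,\ell)$ to pair among themselves non-crossingly; let $(j_1,j_1'),\ldots,(j_s,j_s')$ be the resulting outermost inner pairs, listed left to right. By the induction hypothesis each inner sum $\sum_{i=j_a+1}^{j_a'}D_i$ vanishes, so the total reduces to the sum of $D_i$ over the gap step-intervals $[k+1,j_1]$, $[j_1'+1,j_2],\ldots,[j_s'+1,\ell]$. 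Within each gap the strictly interior positions are all singular, so $D$ alternates there; a gap of length $L$ contributes $0$ if $L$ is even and $\pm 1$ (its starting $D$-value) if $L$ is odd. The opposite-signature condition (A1) applied at $(k,\ell)$ and at each inner pair constrains the arc-signs at $k,j_1,j_1',\ldots,j_s,j_s',\ell$ to satisfy a compatibility relation; combined with the gap parities, this relation forces both the number of odd-length gaps to be even and their $\pm 1$ contributions to cancel telescopingly. Therefore $\sum_{i=k+1}^{\ell}D_i=0$, giving $c_k=c_\ell$.

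For part (2), condition (A2) applies across all pairs of $\pi$, not only within a single trail, so the closed trails in $\mathcal{C}_\pi$ are mutually non-crossing as closed curves on $S^2$, even where they share vertices, since no two subpaths $P,P'\in\pi_v$ at any shared vertex $v$ cross in $|\St(v)|$. The standard Jordan curve argument for a family of mutually non-crossing closed curves on $S^2$ then yields, for any $C,C'\in\mathcal{C}_\pi$, the trichotomy $F_{C'}\subseteq F_C$, $F_C\subseteq F_{C'}$, or $F_C\cap F_{C'}=\emptyset$, which is precisely the laminarity of $\mathcal{L}_\pi$.

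The main obstacle is the vanishing of $\sum_{i=k+1}^{\ell}D_i$ for admissible pairs in part (1); the base case and the purely topological argument for part (2) are routine, but the inductive step requires carefully translating the opposite-signature condition (A1) into the arc-sign compatibility that makes the odd-gap contributions cancel. Once this sum identity is established, the two-color property of $C$ and the laminarity of $\mathcal{L}_\pi$ both follow cleanly.
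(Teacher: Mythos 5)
Your proposal is correct, and part (2) essentially matches the paper's argument. For part (1), however, you take a genuinely different route. The paper argues directly: it sets $n_\pm$ to be the number of $\pm$-faces in $\St^2(v)\cap F_C$, notes that $\alpha(w)\equiv\alpha(u)\pm(n_+-n_-)\pmod 3$, and shows $n_+=n_-$ by observing that (A2) makes the nonsingular edges of $\delta(v)$ strictly inside $C$ a union of $\pi_v$-members and hence $+/-$-balanced by (A1); this balance, together with (A1) applied to the outermost pair $\{e_k,e_\ell\}$, gives $n_+=n_-$ by a short double-count over $(\text{face},\text{edge})$-incidences around $v$. You instead encode the same quantity as $\sum_{i=k+1}^{\ell}D_i$ and prove it vanishes by induction on nesting depth: (A2) yields the outermost-inner-pair decomposition, the induction hypothesis annihilates the inner intervals, and (A1) applied at $(k,\ell)$ and at each inner pair produces the sign-flip relation between successive gap-starting values (the start of gap $a{+}1$ is minus the end of gap $a$). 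As you anticipate, combined with gap parities this forces the number of odd-length gaps to be even and makes their $\pm 1$ contributions alternate in sign, so they cancel in pairs. The induction is well-founded because each inner pair $(j_a,j_a')$ has $j_a'-j_a\geq 2$, since $j_a'-j_a=1$ would force the single shared face to carry both $+$ and $-$ signature, contradicting (A1). What each approach buys: the paper's double-count is shorter and hides the combinatorics in one line, whereas your induction makes the role of noncrossingness (A2) and the nested structure explicit, at the cost of substantially more bookkeeping in the inductive step, which you correctly flag as the part still to be written out in full.
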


\begin{proof}
(1).
Take any member $\{\{u, v\}, \{v, w\}\}$ of $\pi_v$, which forms a subpath of some $C \in \C_\pi$.
It suffices to show that $\alpha(u) = \alpha(w)$.
We may assume that $\alpha(v) = 3$.

Let $n_+$ (resp. $n_-$) denote the number of $+$-faces (resp. $-$-faces) in $\St^2(v) \cap F_C$.
By the definition of the signature map $\varepsilon_\alpha$, we have
\begin{align*}
    \alpha(w) \equiv \alpha(u) + (n_+ - n_-) \pmod 3 \qquad \text{or} \qquad
    \alpha(w) \equiv \alpha(u) - (n_+ - n_-) \pmod 3.
\end{align*}
By the noncrossingness of $\pi_v$,
the set of nonsingular edges incident to $v$ in the inside of $C$ is of the form of the union of a subset of $\pi_v$.
Moreover, since all members of $\pi_v$ consist of one $+$-nonsingular edge and one $-$-nonsingular edge,
the number of $+$-nonsingular edges incident to $v$ in the inside of $C$
equals that of $-$-nonsingular edges.
This implies that $n_+ = n_-$.
Thus $\alpha(u) = \alpha(w)$ follows, as required.

(2).
Take any two closed trails $C, C' \in \C_\pi$.
Since $\pi$ is admissible, in particular, no pair of members in $\pi_v$ crosses in $|\St(v)|$ for any $v \in V$,
the closed trail $C'$ is contained in either the inside or the outside of $C$.
Thus, in the former case we have $F_{C'} \subseteq F_C$, and in the latter case we have $F_C \subseteq F_{C'}$ or $F_C \cap F_{C'} = \emptyset$,
which implies that $\calL_\pi$ is laminar.
\end{proof}

\Cref{lem:admissible}~(2) implies that $\C_\pi$ has an innermost closed trail in $S^2$, which corresponds to a minimal set in $\calL_\pi$.

We are ready to prove \Cref{thm:main}.
\begin{proof}[Proof of \Cref{thm:main}]
We have already seen the only-if part.
In the following, we show the if part.
Let $\alpha \colon V \to \{0,1,2,3\}$ be a $4$-coloring of $G$ satisfying the balanced condition (B) but not a $3$-coloring,
i.e., $\NS_\alpha \neq \emptyset$
by \Cref{lemma:3-coloring}~(2).

We first see that $\alpha$ has an admissible NS-pairing.
Since
\begin{align*}
    2 \cdot \#\varepsilon_{\alpha, v}^{-1}(+1) &= 2 \cdot \#\NS_\alpha^+(v) + \#(\delta(v) \setminus \NS_\alpha(v)) \quad \text{and}\\
    2 \cdot \#\varepsilon_{\alpha, v}^{-1}(-1) &= 2 \cdot \#\NS_\alpha^-(v) + \#(\delta(v) \setminus \NS_\alpha(v)),
\end{align*}
we have 
\[
    \# \NS_\alpha^+(v) = \# \NS_\alpha^-(v)
\]
by (B).
We construct an admissible NS-pairing as follows.
For $v \in V$,
let $\pi' := \emptyset$, $N_v^+ := \NS_\alpha^+(v)$, and $N_v^- := \NS_\alpha^-(v)$.
While $N_v^+ \neq \emptyset$ and $N_v^- \neq \emptyset$,
we take $e^+ \in N_v^+$ and $e^- \in N_v^-$ such that one of the connected components of $|\St(v)| \setminus \{ e^+, e^- \}$ contains no edges in $N_v^+ \cup N_v^-$
(such a pair $(e^+, e^-)$ always exists)
and update $\pi' \leftarrow \pi' \cup \{\{ e^+, e^- \}\}$, $N_v^+ \leftarrow N_v^+ \setminus \{ e^+ \}$,
and $N_v^- \leftarrow N_v^- \setminus \{ e^- \}$.
After the above procedure stops,
we define $\pi_v$ as the resulting $\pi'$.
Then, we can see that $\pi_v$ is satisfies (A1) and (A2).
Therefore, $\pi := \bigcup_{v \in V} \pi_v$ is an admissible NS-pairing.

The following claim is crucial for the proof of \Cref{thm:main}.
\begin{claim}
There exists a recolorable vertex $v_0 \in V$
such that the $4$-coloring $\alpha'$ obtained from $\alpha$ by recoloring $v_0$ has an admissible NS-pairing $\pi'$ satisfying $\vol(\C_{\pi'}) < \vol(\C_\pi)$.
\end{claim}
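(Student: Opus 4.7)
My plan is to pick an \emph{innermost} closed trail $C \in \C_\pi$ (which exists by \Cref{lem:admissible}(2) since $\calL_\pi$ is laminar) and recolor a strategically chosen interior vertex. By \Cref{lem:admissible}(1), $\alpha|_{V(C)}$ uses only two colors, say $\{a,b\}$. The first step is to observe that every edge of $G$ strictly inside $F_C$ is singular with respect to $\alpha$: any nonsingular edge strictly inside $F_C$ would belong to some $C'' \in \C_\pi$ with $C'' \neq C$, and laminarity then forces $F_{C''} \subseteq F_C$, giving $F_{C''} \subsetneq F_C$ by edge-disjointness of $\C_\pi$, which contradicts minimality of $F_C$. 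In particular, every strictly interior vertex of $F_C$ is recolorable by \Cref{lemma:3-coloring}(1).

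Next, fix any edge $\{u,w\}$ of $C$ and let $v_0$ be the third vertex of the unique face of $F_C$ incident to $\{u,w\}$. Because $\alpha|_{V(C)}$ is a $2$-coloring, three vertices of $C$ cannot span a face, so $v_0$ is strictly interior; thus $v_0$ is recolorable and $\{u,w\} \in \Lk(v_0) \cap C$. Using that every edge incident to $v_0$ is singular, a signature-alternation computation around $\St^2(v_0)$ shows that $\Lk(v_0)$ is alternately $2$-colored, and since $\alpha(u)=a$ and $\alpha(w)=b$ already appear, the two colors of $\Lk(v_0)$ are exactly $\{a,b\}$. Consequently $\alpha(v_0) \in \{c,d\}:=\{0,1,2,3\}\setminus\{a,b\}$, and recoloring $v_0$ to the unique element of $\{c,d\}\setminus\{\alpha(v_0)\}$ is proper, because every neighbor of $v_0$ is colored in $\{a,b\}$.

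Finally, I will construct $\pi'$ from $\pi$ by a local modification: keep $\pi_v$ at each $v \notin N(v_0) \cup \{v_0\}$, leave $v_0$ unpaired (all of $\delta(v_0)$ remains singular under $\alpha'$), and at each $u \in N(v_0)$ replace every pair-element in $\Lk(v_0)\cap C\cap\delta(u)$ by the other edge of $\Lk(v_0)\cap\delta(u)$, which has become nonsingular under $\alpha'$. The balanced condition (B) is preserved at every vertex because $v_0$ is recolorable; the signature flip at the two faces of $\St^2(v_0)$ meeting at $\{v_0,u\}$ exchanges the $+/-$ roles of the two edges of $\Lk(v_0)\cap\delta(u)$, so (A1) survives the swap, and (A2) is preserved since the swap takes place in a small neighborhood of $\St(v_0)\cap\St(u)$ in $S^2$. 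By \Cref{lem:nonsingular}, $\NS_{\alpha'} = \NS_\alpha \bigtriangleup \Lk(v_0)$, so the new closed trails in $\C_{\pi'}$ together enclose $F_C$ with the fan of faces $\{v_0,y,y'\}$ for $\{y,y'\}\in \Lk(v_0)\cap C$ removed; since this fan contains $\{v_0,u,w\}$, we obtain $\vol(\C_{\pi'}) \le \vol(\C_\pi)-1$. The main obstacle I foresee is the admissibility check for $\pi'$, which hinges on matching the signature flips on $\St^2(v_0)$ to the local swap and on ruling out new crossings; a secondary issue is the case analysis of how $C \bigtriangleup \Lk(v_0)$ decomposes into one or more new closed trails when $\Lk(v_0)\cap C$ consists of several arcs, but in every case the removed fan guarantees a strict drop in volume.
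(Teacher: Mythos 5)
Your overall approach coincides with the paper's: take an innermost trail $C\in\C_\pi$, let $v_0$ be the third vertex of a face of $F_C$ bordering an edge of $C$, observe that $v_0\notin V(C)$ because $\alpha|_{V(C)}$ is a $2$-coloring, recolor $v_0$, update the NS-pairing locally, and argue the volume strictly drops. Several of your preliminary observations (every edge strictly inside $F_C$ is singular; $\Lk(v_0)$ is alternately $2$-colored with colors $\{a,b\}$ because the opposite-vertex characterization of singularity forces the two $\Lk(v_0)$-neighbors of each $x\in\Lk(v_0)$ to share a color) are correct and pleasant, and slightly more explicit than what the paper records. Your volume estimate also reaches the required strict inequality, though the removed ``fan'' is all of $\St^2(v_0)$, not only the faces $\{v_0,y,y'\}$ with $\{y,y'\}\in\Lk(v_0)\cap C$; this is harmless since you only claim $\le \vol(\C_\pi)-1$.

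The genuine gap is in the construction of $\pi'$. Your rule at a vertex $u\in N(v_0)$ — ``replace every pair-element in $\Lk(v_0)\cap C\cap\delta(u)$ by the other edge of $\Lk(v_0)\cap\delta(u)$'' — only produces a valid NS-pairing when exactly one of the two edges in $P_u:=\Lk(v_0)\cap\delta(u)$ lies on $C$. If neither edge of $P_u$ lies on $C$ (which occurs whenever $u$ is strictly interior to $C$, or more generally whenever $\Lk(v_0)\not\subseteq C$), then both edges of $P_u$ were singular and become nonsingular under $\alpha'$, so $\pi'_u$ must gain the new pair $\{P_u\}$; your rule leaves $\pi'_u=\pi_u$, and the result is not an NS-pairing. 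Symmetrically, if both edges of $P_u$ lie on $C$, both become singular under $\alpha'$ and must be \emph{removed} from $\pi'_u$, not swapped with each other. The paper's proof handles exactly these three possibilities by a case split ($\NS_\alpha(v)\cap\Lk(v_0)=\emptyset$; $\pi_v$ contains $P$ with $|P\cap P_v|=1$; $\pi_v$ contains $P_v$), and then verifies (A1) via the signature flip $\varepsilon_{\alpha'}(\{u,v,v_0\})=\varepsilon_\alpha(\{w,v,v_0\})$ and (A2) via the fact that $\{v_0,v\}$ is still singular. You flag the admissibility check and the decomposition of $C\bigtriangleup\Lk(v_0)$ as foreseeable obstacles, but these are exactly the steps that need to be carried out, and the single replacement rule you state does not cover them.
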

If this claim is true, then by recoloring such $v_0$ repeatedly,
we finally obtain a $4$-coloring $\alpha^*$ and an admissible NS-pairing $\pi^*$ with respect to $\alpha^*$ such that $\vol(\C_{\pi^*}) = 0$.
The equality $\vol(\C_{\pi^*}) = 0$ implies $\NS_{\alpha^*} = \emptyset$,
i.e., $\alpha^*$ is actually a $3$-coloring by \Cref{lemma:3-coloring}~(2).
Therefore,
$\alpha$ belongs to the $3$-coloring component of $\R_4(G)$,
as required.

In the following, we show the claim.
Take an arbitrary innermost closed trail $C \in \C_\pi$,
the existence of which is verified by \Cref{lem:admissible}~(2), and an edge $e = \{v_1, v_2\} \in C$.
Let $\{v_0, v_1, v_2\}$ be the face in the inside of $C$, or in $F_C$.
Since $\alpha$ is a $4$-coloring,
the color $\alpha(v_0)$ is different from both $\alpha(v_1)$ and $\alpha(v_2)$.
Therefore
$v_0$ does not belong to $C$ by \Cref{lem:admissible}~(1),
implying that $\St^2(v_0) \subseteq F_C$.
Since $C$ is an innermost closed trail, no edge incident to the vertex $v_0$ is nonsingular with respect to $\alpha$.
Thus, by \Cref{lemma:3-coloring}~(1), we can change the color of $v_0$.

Let $\alpha'$ be the $4$-coloring obtained from $\alpha$ by changing the color of $v_0$.
For each $v \in N(v_0)$,
we have $\# \left(\delta(v) \cap \Lk(v_0)\right) = 2$,
and denote $\delta(v) \cap \Lk(v_0)$ by $P_v$.
We define $\pi' = \bigcup_{v \in V} \pi_v'$ by
\begin{align*}
    \pi_v' :=
    \begin{cases}
        \pi_v & \text{if $v \notin N(v_0)$},\\
        \pi_v \cup \{ P_v \} & \text{if $v \in N(v_0)$ and $\NS_\alpha(v) \cap \Lk(v_0) = \emptyset$},\\
        (\pi_v \setminus \{P\}) \cup \{ P \bigtriangleup P_v \} & \text{if $v \in N(v_0)$ and $\pi_v$ contains $P$ with $|P \cap P_v| = 1$},\\
        \pi_v \setminus \{ P_v \} & \text{if $v \in N(v_0)$ and $\pi_v$ contains $P_v$}.
    \end{cases}
\end{align*}
See also Figure~\ref{fig:recolor}.
Then
$\pi'$ is an NS-pairing with respect to $\alpha'$ by \Cref{lem:nonsingular}.

Moreover, we can see that $\pi'$ is admissible as follows.
It is clear that
$\pi_v' \cap \pi_v$ satisfies (A1) and (A2) for each $v \in V$,
implying that $\pi_v'$ satisfies (A1) and (A2) if $v \notin N(v_0)$, or $v \in N(v_0)$ and $\pi_v$ contains $P_v$.
Since the edge $\{v_0, v\}$ is singular with respect to $\alpha'$,
the path $P_v$ (resp.\ $P \bigtriangleup P_v$)
does not cross any $P' \in \pi_v$ (resp.\ $P' \in \pi_v \setminus \{P\}$); $\pi_v'$ satisfies (A2) even for other $v$.
Suppose that $P_v = \{ \{u, v\}, \{v, w\} \}$.
Then, we have
$\varepsilon_{\alpha'}(\{u,v,v_0\}) = \varepsilon_{\alpha}(\{w,v,v_0\})$
and $\varepsilon_{\alpha'}(\{w,v,v_0\}) = \varepsilon_{\alpha}(\{u,v,v_0\})$.
This implies that,
if $v \in N(v_0)$ and $\pi_v$ contains $P$ with $|P \cap P_v| = 1$, 
then
$P \bigtriangleup P_v$ consists of one $+$-nonsingular edge and one $-$-nonsingular edge with respect to $\alpha'$,
and
if $v \in N(v_0)$ and $\NS_\alpha(v) \cap \Lk(v_0) = \emptyset$, 
then
$P_v$ consists of one $+$-nonsingular edge and one $-$-nonsingular edge with respect to $\alpha'$.
Thus $\pi_v'$ satisfies (A1) for other $v$.

Let $\C'$ be the set of the closed trails in $\C_{\pi'}$ containing some $e \in \Lk(v_0) \cap \NS_{\alpha'}$.
Then, we have $F_C = \bigcup_{C' \in \C'} F_{C'} \cup \St^2(v_0)$
and
$\C_{\pi'} = \C_\pi \setminus \{C\} \cup \C'$.
Therefore, we obtain $\vol(\C_{\pi'}) = \vol(\C_\pi) - \#\St^2(v_0) < \vol(\C_\pi)$;
see also Figure~\ref{fig:volume}
\begin{figure}
\centering
\includegraphics{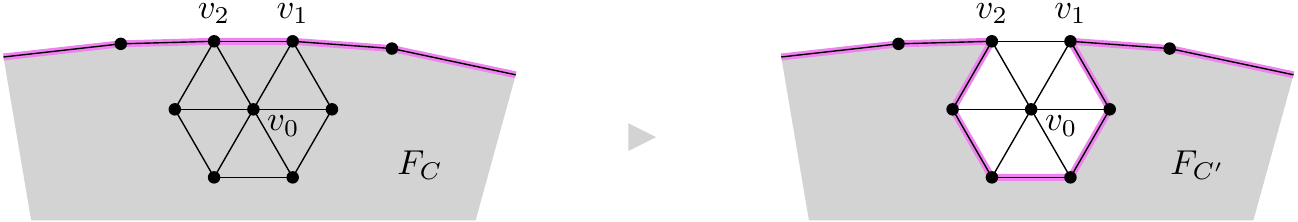}
\caption{Reducing the volume.}
\label{fig:volume}
\end{figure}

This completes the proof of the claim (and hence that of \Cref{thm:main}).
\end{proof}

Our proof of \Cref{thm:main} is constructive;
for a $4$-coloring $\alpha$ satisfying the balanced condition~(B),
we explicitly
construct
a sequence of single-changes from $\alpha$ to a certain $3$-coloring $\alpha^*$.
This leads to the following.
\begin{theorem}\label{thm:diameter}
Let $G$ be a $3$-colorable triangulation of the $2$-sphere.
For any $\alpha$ and $\beta$ belonging to the $3$-coloring component of $G$,
we can obtain in $O({\# V}^2)$ time a sequence of single-changes of length $O({\# V}^2)$ from $\alpha$ to $\beta$.
In particular,
the diameter of the $3$-coloring component of $G$ is $O({\# V}^2)$.
\end{theorem}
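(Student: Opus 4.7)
The plan is to invoke the proof of Theorem~\ref{thm:main} as a subroutine, apply it once starting from $\alpha$ and once starting from $\beta$ to obtain respective $3$-colorings $\alpha^{\ast}$ and $\beta^{\ast}$, and then bridge $\alpha^{\ast}$ to $\beta^{\ast}$ by a short explicit sequence of single-changes between $3$-colorings. Concatenating the sequence $\alpha \to \alpha^{\ast}$, the bridge $\alpha^{\ast} \to \beta^{\ast}$, and the reverse of the sequence $\beta \to \beta^{\ast}$ will produce the desired sequence from $\alpha$ to $\beta$.

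The key quantitative step is to bound the initial value of $\vol(\C_\pi)$ for the admissible NS-pairing constructed in the proof of Theorem~\ref{thm:main}. By Euler's formula on the $2$-sphere, $\#F = 2\#V - 4 = O(\#V)$, and every closed trail in $\C_\pi$ contains at least one edge, so $\#\C_\pi = O(\#V)$. Since each $F_C \subseteq F$,
\[
\vol(\C_\pi) \;=\; \sum_{C \in \C_\pi} \#F_C \;\le\; \#\C_\pi \cdot \#F \;=\; O(\#V^2).
\]
Each iteration of the subroutine strictly decreases $\vol(\C_\pi)$ by $\#\St^2(v_0) \ge 1$, so it terminates after $O(\#V^2)$ single-changes with a $3$-coloring in hand.

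For the bridge between two $3$-colorings $\alpha^{\ast}$ and $\beta^{\ast}$, note that they differ by a permutation $\sigma$ of $\{0,1,2,3\}$, which is a product of at most three transpositions. I will realize a transposition $(a,b)$ by the standard three-phase swap using the color $c$ not appearing in the current $3$-coloring: first recolor every $a$-vertex to $c$, then every $b$-vertex to $a$, and finally every $c$-vertex (originally colored $a$) to $b$. In each phase the vertices to be recolored form an independent set and the target color is absent from each such vertex's current neighborhood, so every single-change is valid. Each transposition thus costs $O(\#V)$ single-changes, the bridge has length $O(\#V)$, and the total concatenated sequence has length $O(\#V^2)$.

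For the $O(\#V^2)$-time implementation I would maintain the current admissible NS-pairing as a family of doubly-linked cyclic edge-lists (one per trail in $\C_\pi$), together with the laminar family $\calL_\pi$ via parent pointers and a queue of its leaves so that an innermost trail can be located in $O(1)$ time. Each single-change modifies only edges in $\bigcup_{u \in \{v_0\}\cup N(v_0)} \delta(u)$, and the local updates prescribed in the proof of Theorem~\ref{thm:main} (the case analysis defining $\pi'_v$, together with the splitting/merging of the trail containing the innermost $C$) can be carried out in $O(\#\St^2(v_0))$ time. The main obstacle will be precisely this data-structural bookkeeping, in particular the maintenance of $\calL_\pi$ under local changes at $v_0$; once it is in place, the amortized identity
\[
\sum_{\text{iterations}} \#\St^2(v_0) \;\le\; \vol(\C_\pi)_{\text{initial}} \;=\; O(\#V^2)
\]
yields both the length bound and the total running time.
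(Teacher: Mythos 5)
Your proposal is correct and follows essentially the same route as the paper: construct an admissible NS-pairing for each of $\alpha$ and $\beta$, run the volume-decreasing recoloring subroutine from the proof of Theorem~\ref{thm:main} to reach $3$-colorings $\alpha^{\ast}$ and $\beta^{\ast}$, and bridge them by realizing the connecting color permutation via single-changes through the absent fourth color. The only cosmetic difference is in bounding the initial volume: you use $\#\C_\pi \le \#E = O(\#V)$ (trail edge-disjointness), whereas the paper invokes the laminar bound $\#\calL_\pi = O(\#F)$; both yield $\vol(\C_\pi) = O(\#V^2)$ and are equivalent in substance.
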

\begin{proof}
Let $\alpha$ and $\beta$ be $4$-colorings belonging to the $3$-coloring component of $G$.
It suffices to show that
there exists a sequence of single-changes from $\alpha$ to $\beta$ whose length is $O({\# V}^2)$,
and that it can be obtained in $O({\# V}^2)$ time.

For $\alpha$,
we can construct an admissible NS-pairing $\pi$
and the laminar family $\calL_\pi \subseteq 2^F$ in $O({\# V}^2)$ time.
By $\# \calL_\pi = O({\# F})$ (see e.g.,~\cite[Theorem 3.5]{Sch03}),
we have $\vol(\C_\pi) = \sum_{F_C \in \calL_\pi} {\# F_C} = O({\# F}^2)$.
Since $\vol(\C_\pi)$ strictly decreases for each single-change,
the length of a sequence of single-changes to obtain a $3$-coloring $\alpha^*$ from $\alpha$
is $O({\# F}^2)$,
which implies that it takes $O({\# F}^2)$ time to obtain $\alpha^*$ from $\alpha$.
Similarly,
we can obtain a $3$-coloring $\beta^*$ from $\beta$ in $O({\# V}^2)$ time by a sequence of single-changes, whose length is $O({\# V}^2)$.
Furthermore, the number of single-changes required to obtain the $3$-coloring $\beta^*$ from the $3$-coloring $\alpha^*$ is $O({\# V})$ (see Introduction for an actual sequence of single-changes to obtain $\beta^*$ from $\alpha^*$).
Hence, by concatenating the sequence from $\alpha$ to $\alpha^*$,
that from $\alpha^*$ to $\beta^*$, and the inverse of that from $\beta$ to $\beta^*$,
we obtain in $O({\# V}^2)$ time a sequence from $\alpha$ to $\beta$,
whose length is $O({\# V}^2)$.
\end{proof}

\Cref{thm:main,thm:diameter} immediately imply the polynomial-time solvability of \fourrecolor for $G$
if given $\alpha$ or $\beta$ belongs to the $3$-coloring component.
We here note that, for a $4$-coloring $\alpha$ of $G$,
we can check if it satisfies the balanced condition (B) in $O(\#F) = O(\#V)$ time.
\begin{corollary}
\label{cor:4Recoloring}
Let $G$ be a $3$-colorable triangulation of the $2$-sphere.
\fourrecolor for $G$ can be solved in $O(\#V)$ time,
provided one of the input $4$-colorings $\alpha$ and $\beta$ belongs to the $3$-coloring component of $\mathcal{R}_4(G)$.
In addition, if both $\alpha$ and $\beta$ belong to the $3$-coloring component,
then we can obtain a reconfiguration sequence from $\alpha$ to $\beta$ in $O({\# V}^2)$ time.
\end{corollary}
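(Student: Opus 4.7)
The plan is essentially to reduce the corollary to two tools that are already in hand: Theorem~\ref{thm:main}, which gives an effectively checkable characterization of membership in the $3$-coloring component via the balanced condition~(B), and Theorem~\ref{thm:diameter}, which produces an explicit reconfiguration sequence between any two colorings in that component.

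First I would argue that the balanced condition (B) can be verified in $O(\#V)$ time for any given $4$-coloring $\gamma$ of $G$. The key observation is that by Euler's formula for a triangulation of $S^2$ we have $\#F = O(\#V)$ and $\sum_{v \in V} \#\St^2(v) = 3 \#F = O(\#V)$. Thus one can compute $\varepsilon_\gamma(f) \in \{+1,-1\}$ for every face $f \in F$ in total $O(\#F)$ time, and then, for each $v \in V$, accumulate the signed count $\#\varepsilon_{\gamma,v}^{-1}(+1) - \#\varepsilon_{\gamma,v}^{-1}(-1)$ by iterating once through the incidences; the whole routine runs in $O(\#V)$ time and returns YES iff every accumulator is zero.

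Next, for the decision problem, assume without loss of generality that $\alpha$ is promised to lie in the $3$-coloring component of $\mathcal{R}_4(G)$. By Theorem~\ref{thm:main}, the answer to \fourrecolor is YES iff $\beta$ also lies in the $3$-coloring component, which by the same theorem is equivalent to $\beta$ satisfying (B). Hence one simply runs the $O(\#V)$-time check above on $\beta$ (one may optionally also verify (B) for $\alpha$, which certifies the promise) and answers accordingly; total running time $O(\#V)$.

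For the constructive part, when both $\alpha$ and $\beta$ belong to the $3$-coloring component, Theorem~\ref{thm:diameter} already states that a reconfiguration sequence of length $O(\#V^2)$ from $\alpha$ to $\beta$ can be produced in $O(\#V^2)$ time; I would simply invoke this theorem. There is essentially no obstacle beyond bookkeeping: the only thing to be careful about is confirming that computing signatures and checking (B) is genuinely linear, which follows from the bounded incidence structure of a $2$-sphere triangulation via Euler's formula as noted above.
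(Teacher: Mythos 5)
Your proposal matches the paper's intended argument: it reduces the decision to checking the balanced condition~(B) for the non-promised coloring using Theorem~\ref{thm:main}, justifies linearity of that check via Euler's formula, and invokes Theorem~\ref{thm:diameter} for the constructive $O(\#V^2)$ sequence. The paper states this corollary as an immediate consequence with exactly that accounting (noting $O(\#F)=O(\#V)$ for the check), so your reasoning is correct and there is no meaningful difference in approach.
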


\subsection{High-dimensional case}
\label{subsec:highdim_case}
This subsection is devoted to extending the results in Section~\ref{subsec:2dim_case} to the case of triangulations of the $d$-sphere $S^d$ for $d=k-2\geq 3$.
In fact, we deal with not only $S^d$ but also connected closed $d$-manifolds $M$ with $H_{d-1}(M;\Z/2\Z)=\{0\}$;
see \Cref{sec:homology} for the definition of the homology groups with $\Z/2\Z$-coefficients.
By Poincar\'e duality and the universal coefficient theorem (\cite[Theorems~65.1 and 53.5]{Mun84}), $H_{d-1}(M;\Z/2\Z)=\{0\}$ is equivalent to $H_{1}(M;\Z/2\Z)=\{0\}$.
It is worth mentioning that there are infinitely many such manifolds other than $S^d$ when $d\geq 3$ (see e.g., \cite{Sav02}; \Cref{sec:homology} provides an example of such a $3$-manifold).
Note that the terminologies introduced in Section~\ref{sec:preliminaries} are valid for such manifolds.

We first review notations in high-dimensional cases. 
Let $[01\cdots d]$ denote the oriented $d$-simplex.
Then, its boundary is expressed as follows:
\[
\bigcup_{i=0}^d(-1)^i[01\cdots i-1\,i+1\cdots d],
\]
where the minus sign indicates the opposite orientation.
For instance, $-[0124]=[1024]$ as an  oriented $3$-simplex.
We now consider a triangulation $K$ of $M$ and let $\alpha\colon V \to \{0,1,\dots,d+1\}$ be a $(d+2)$-coloring of the $1$-skeleton of $K$.
For each $(d-1)$-simplex $\sigma^{d-1}=[v_0v_1\cdots v_{d-1}]$, define $\varepsilon_\alpha(\sigma^{d-1}):=+1$ if $[\alpha(v_0)\alpha(v_1)\cdots \alpha(v_{d-1})]$ appears in the above union, and $\varepsilon_\alpha(\sigma^{d-1}):=-1$ otherwise.
For instance, if a $3$-simplex $\sigma^{3}$ is expressed as $[1024]$ under $\alpha$, then $\varepsilon_\alpha(\sigma^{3})=-1$.

\begin{remark}
In this paper, a triangulation means a simplicial triangulation which is not necessarily combinatorial (or piecewise linear), that is, we do not require that the link $\Lk(\sigma^q)$ of each $q$-simplex $\sigma^q$ is PL-homeomorphic to the $(d-q-1)$-sphere.
On the other hand, the link $\Lk(\sigma^{d-2})$ is always PL-homeomorphic to the circle.
Note that there is no difference between simplicial and combinatorial triangulations in dimension less than or equal to four.
For more details we refer the reader to \cite[Section~1]{Man18}.
\end{remark}

The next theorem is an extension of Theorem~\ref{thm:main} to high dimensions.

\begin{theorem}
\label{thm:highdim_case}
Let $M$ be a connected closed $d$-manifold with $H_{d-1}(M;\Z/2\Z)=\{0\}$ admitting a triangulation $K$ whose $1$-skeleton $G$ is $(d+1)$-colorable.
Let $\alpha\colon V \to \{0,1,\dots,d+1\}$ be a $(d+2)$-coloring of $G$.
Then $\alpha$ belongs to the $(d+1)$-coloring component of $\R_{d+2}(G)$ if and only if
\begin{align}\label{eq:highdim B}
\#\{\sigma^d \in \St^d(\sigma^{d-2}) \mid \varepsilon_\alpha(\sigma^d) = +1 \}
= \#\{\sigma^d \in \St^d(\sigma^{d-2}) \mid \varepsilon_\alpha(\sigma^d) = -1 \}
\end{align}
for each $(d-2)$-simplex $\sigma^{d-2}$ in $K$.
\end{theorem}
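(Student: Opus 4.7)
My plan is to mirror the structure of the proof of \Cref{thm:main} (the two-dimensional case), with the topological hypothesis $H_{d-1}(M;\Z/2\Z)=\{0\}$ playing the role that the vanishing $H_1(S^2;\Z/2\Z)=\{0\}$ implicitly plays there.

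For the necessary direction, I would first verify that any $(d+1)$-coloring satisfies \eqref{eq:highdim B}: the $d-1$ vertices of $\sigma^{d-2}$ use $d-1$ distinct colors, so $\Lk(\sigma^{d-2})$ (a cycle) must be $2$-colored in an alternating pattern by the two remaining colors, forcing the signs $\varepsilon_\alpha$ of the $d$-simplices in $\St^d(\sigma^{d-2})$ to alternate around the cycle. Next, I would show that \eqref{eq:highdim B} is preserved by a single-change at any recolorable vertex $v$, via a case split on the $(d-2)$-simplex $\sigma^{d-2}$. If $v\in\sigma^{d-2}$ every $d$-simplex in $\St^d(\sigma^{d-2})$ contains $v$, so all signs flip and the balance is preserved; if $v\notin\sigma^{d-2}$ and $v$ is not a vertex of $\Lk(\sigma^{d-2})$, nothing changes; if $v$ is a vertex of $\Lk(\sigma^{d-2})$, then $v$ is incident to exactly two edges of the link cycle, so exactly two $d$-simplices in $\St^d(\sigma^{d-2})$ have their signs flipped. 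These two $d$-simplices share the $(d-1)$-face $\sigma^{d-2}\cup\{v\}$, and a high-dimensional extension of \Cref{lemma:3-coloring}(1) (namely: a vertex is recolorable if and only if every incident $(d-1)$-simplex is singular) guarantees that they already have opposite signs, so the flip preserves balance.

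For the sufficient direction, I would develop high-dimensional analogues of the objects in Section~\ref{subsec:2dim_case}. Call a $(d-1)$-simplex singular if the two $d$-simplices bounding it have opposite signs and nonsingular otherwise, and let $\NS_\alpha$ denote the set of nonsingular $(d-1)$-simplices. A parity argument on each cycle $\Lk(\sigma^{d-2})$, counting sign-changes in the induced coloring of the link, shows that the number of nonsingular $(d-1)$-simplices in $\St^{d-1}(\sigma^{d-2})$ is always even, so $\NS_\alpha$ is a $(d-1)$-cycle in $C_{d-1}(K;\Z/2\Z)$. By the hypothesis $H_{d-1}(M;\Z/2\Z)=\{0\}$, this cycle bounds a $d$-chain $D$; its support plays the role of the ``inside'' of a closed trail in the two-dimensional proof, and the volume is defined as the number of $d$-simplices in a suitably chosen minimal such $D$. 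The analogue of \Cref{lem:nonsingular} reads $\NS_{\alpha'}=\NS_\alpha\bigtriangleup(\partial\St^d(v)\setminus\St^{d-1}(v))$ for a single-change at $v$, so the bounding chain can be updated consistently, and an innermost-type argument produces a recolorable vertex strictly inside $D$ whose recoloring strictly decreases the volume. Iterating drives $\NS_\alpha$ to $\emptyset$, which by the analogue of \Cref{lemma:3-coloring}(2) means $\alpha$ has been reduced to a $(d+1)$-coloring.

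The main obstacle will be the high-dimensional analogue of the admissible NS-pairing. In dimension two, the laminar family $\calL_\pi$ produced by \Cref{lem:admissible} gave an unambiguous innermost closed trail and hence an obvious recolorable vertex. In dimension $d\geq 3$, the nonsingular cycle $\NS_\alpha$ need not decompose into disjoint nicely nested embedded $(d-1)$-spheres, so one must instead work with a carefully chosen bounding $d$-chain and use the vanishing of $H_{d-1}(M;\Z/2\Z)$ as the sole topological input. Making precise the statement ``there exists a recolorable vertex strictly inside $D$'' and verifying that its recoloring strictly decreases the volume, without the convenient planarity that drove the two-dimensional induction, is where the essential new work lies.
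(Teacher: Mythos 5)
Your overall skeleton matches the paper's: the same notion of singular/nonsingular $(d-1)$-simplices, the observation that $\NS_\alpha$ is a $(d-1)$-cycle, the use of $H_{d-1}(M;\Z/2\Z)=\{0\}$ to produce a checkerboard coloring (equivalently, a bounding $d$-chain and hence a notion of ``inside'' and volume), and a descent argument on the volume. The necessary direction is also handled the same way. However, you explicitly flag the high-dimensional admissible NS-pairing as the unresolved ``main obstacle'' and propose to abandon it in favor of working directly with a bounding $d$-chain; this is where your proposal stops short, and it is exactly where the paper's proof supplies the missing idea.

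The key observation you are missing is that the admissible NS-pairing goes through verbatim in every dimension, because the combinatorics lives entirely around $(d-2)$-simplices, and $\Lk(\sigma^{d-2})$ is always PL-homeomorphic to a circle. Concretely: for each $(d-2)$-simplex $\sigma^{d-2}$, every $d$-simplex of $\St^d(\sigma^{d-2})$ is uniquely of the form $\tau^1 \ast \sigma^{d-2}$ with $\tau^1$ an edge of the circle $\Lk(\sigma^{d-2})$, and likewise every $(d-1)$-simplex of $\St^{d-1}(\sigma^{d-2})$ not in $\Lk(\sigma^{d-2})$ is $w \ast \sigma^{d-2}$ for a vertex $w$ of that circle. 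Projecting $\St(\sigma^{d-2})$ onto the disk $\Lk(\sigma^{d-2}) \ast \{v\}$ (for a vertex $v$ of $\sigma^{d-2}$) turns the signature and nonsingularity data around $\sigma^{d-2}$ into precisely the two-dimensional picture around a vertex, so the balanced condition~\eqref{eq:highdim B} gives an equal number of $+$- and $-$-nonsingular $(d-1)$-simplices around $\sigma^{d-2}$, and the same noncrossing pairing construction from the proof of \Cref{thm:main} produces $\pi_{\sigma^{d-2}}$. Taking $\pi := \bigcup_{\sigma^{d-2} \in K}\pi_{\sigma^{d-2}}$ yields an admissible NS-pairing in any dimension. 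With this in hand, \Cref{lem:admissible} generalizes (the restriction of $\alpha$ to each $C \in \C_\pi$ is a $d$-coloring, and $\calL_\pi$ is laminar), so the innermost-cycle argument and the volume descent go through exactly as in two dimensions. In short, you do not need to replace the NS-pairing machinery with a direct argument on bounding chains; you need the single fact that links of $(d-2)$-simplices are circles, which reduces the pairing problem to the two-dimensional case. Without that, your proposal has a genuine gap at the crucial step of locating a recolorable vertex and certifying that recoloring it strictly reduces the volume.
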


\begin{remark}
Under the assumption of Theorem~\ref{thm:highdim_case}, the $(d+1)$-coloring component is nonempty (see \cite[Theorem~55(1) or Problem~2 in Section~VI.5]{Fis77}).
\end{remark}

\begin{example}
\label{ex:highdim}
For even integers $m,n\geq 3$, the join $C_m\ast C_n$ is $4$-colorable and can be realized by the $1$-skeleton of a triangulation of $S^3$.
Let $\alpha$ be a $5$-coloring.
Then, we may assume that $C_m$ and $C_n$ is colored with $\{0,1,2\}$ and $\{3,4\}$, respectively.
The coloring $\alpha$ induces a continuous map $C_m \to C_3$, and its ``degree'' is defined as discussed in \cite{Fis73I}.
Here, the balanced condition is equivalent to the degree being zero.
Thus, Theorem~\ref{thm:highdim_case} implies that a $5$-coloring $\alpha$ is single-equivalent to a $4$-coloring if and only if the degree is zero.
Note that this consequence can be directly confirmed without Theorem~\ref{thm:highdim_case}.

It is worth mentioning here that all $5$-colorings of $C_6\ast C_4$ are Kempe-equivalent.
Hence, this example shows a difference between the Kempe-equivalence and the reconfiguration discussed in this paper.
\end{example}

\begin{proof}[Proof of Theorem~\ref{thm:highdim_case}]
First note that a set $C$ of $(d-1)$-simplices can be identified with a $(d-1)$-chain with $\Z/2\Z$-coefficients and that if $C$ is a $(d-1)$-cycle then the hypothesis $H_{d-1}(M;\Z/2\Z)=\{0\}$ implies that the homology class $[C]$ is zero, that is, $M\setminus |C|$ admits a checkerboard coloring (see Appendix~\ref{sec:homology}).

If $\alpha$ lies in the $(d+1)$-coloring component, then the equality~\eqref{eq:highdim B} in the statement holds.
To show its converse, let $\alpha$ be a $(d+2)$-coloring satisfying the equality~\eqref{eq:highdim B}.
Fix a $d$-simplex $\sigma^d_{\mathrm{out}} \in K$.
For a $(d-1)$-cycle $C$ with $\Z/2\Z$-coefficients, we consider a checkerboard coloring with $\sigma^d_{\mathrm{out}}$ white and define $F_C$ to be the black $d$-simplices.
Also, for a set $\C$ of $(d-1)$-cycles, the volume $\vol(\C)$ is defined in the same way as in Section~\ref{subsec:2dim_case}.
Suppose that an admissible NS-pairing $\pi=\bigcup_{\sigma^{d-2}\in K}\pi_{\sigma^{d-2}}$ is given.
Then $\C_\pi$ and $\calL_\pi:=\{F_C \mid C \in \C_\pi\}$ are defined as well, and Lemma~\ref{lem:admissible} is generalized as follows: The restriction of $\alpha$ to $C$ is a $d$-coloring, and the family $\calL_\pi$ is laminar.
Now, in much the same way as Theorem~\ref{thm:main}, one can find a recolorable vertex $v \in K$ such that the $(d+2)$-coloring $\alpha'$ obtained from $\alpha$ by recoloring $v$ has an admissible NS-pairing $\pi'$ satisfying $\vol(\C_{\pi'})<\vol(\C_{\pi})$.

Let us show that there exists an admissible NS-pairing $\pi$.
Let $\sigma^{d-2} \in K$.
A $d$-simplex in $\St^d(\sigma^{d-2})$ is uniquely expressed as a join $\tau^1\ast\sigma^{d-2}$ for some $\tau^1 \in \Lk(\sigma^{d-2})$.
Hence, we have a projection from $\St(\sigma^{d-2})$ to the disk $\Lk(\sigma^{d-2})\ast\{v\}$ centered at $v$, which sends $\tau^1\ast\sigma^{d-2}$ to $\tau^1\ast\{v\}$.
Since the equality~\eqref{eq:highdim B} implies the balanced condition (B) around $v$ as illustrated in Figure~\ref{fig:projection}, we have an admissible NS-pairing $\pi$ of $(d-1)$-simplices around $\sigma^{d-2}$.
\end{proof}

\begin{figure}
\centering
\includegraphics{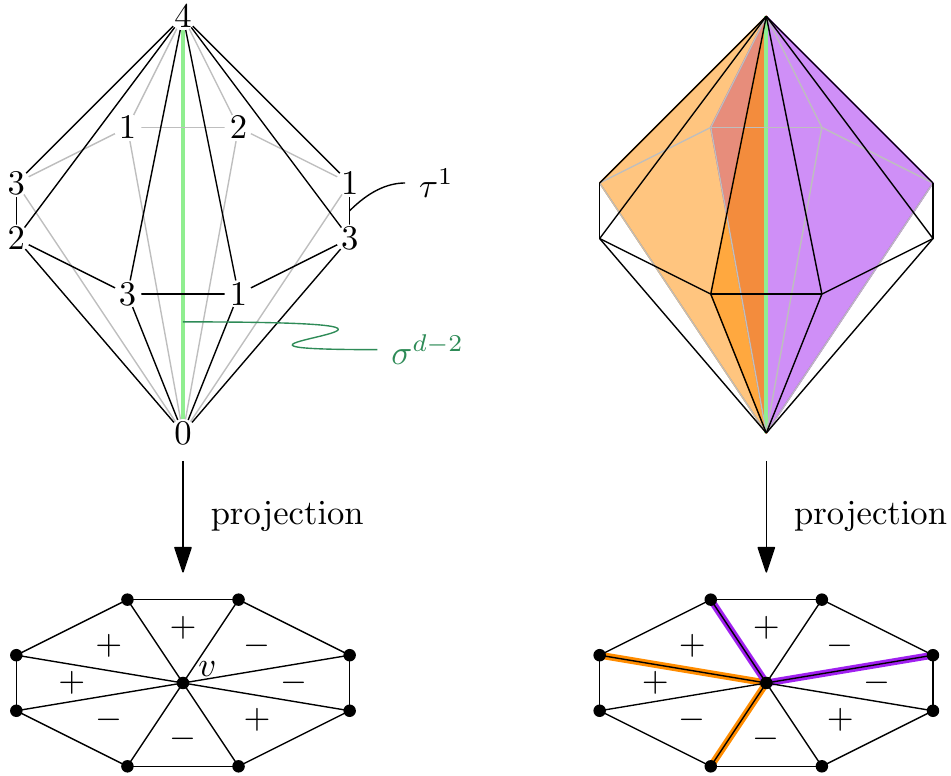}
\caption{An example of the projection when $d=3$. The figure on the right indicates an admissible NS-pairing.}
\label{fig:projection}
\end{figure}

We now have the following consequence.
Recall that $d=k-2$.

\begin{theorem}
\label{thm:diam_high}
Let $G$ be a $(k-1)$-colorable triangulation of the $(k-2)$-sphere for some $k \geq 4$.
For any $k$-colorings $\alpha, \beta$ of $G$ single-equivalent to some $(k-1)$-coloring, there is a sequence of single-changes of length $O(\#V^{2\lfloor (k-1)/2 \rfloor})$ which transforms $\alpha$ to $\beta$.
\end{theorem}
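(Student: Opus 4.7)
The plan is to carry the volume-decrement argument of \Cref{thm:diameter} into the general setting supplied by \Cref{thm:highdim_case}. Since $\alpha$ and $\beta$ both lie in the $(k-1)$-coloring component, each satisfies the balanced condition~\eqref{eq:highdim B}, and by the proof of \Cref{thm:highdim_case} each admits an admissible NS-pairing $\pi$ (respectively $\pi'$) together with an associated family $\C_\pi$ of $(d-1)$-cycles (where $d=k-2$) and an associated laminar family $\calL_\pi = \{F_C \mid C\in \C_\pi\}$ of sets of $d$-simplices. As shown there, a single-change at a suitable recolorable vertex $v_0$ lying strictly inside an innermost member of $\C_\pi$ produces a new admissible NS-pairing whose volume is $\vol(\C_\pi)-\#\St^d(v_0) < \vol(\C_\pi)$. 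Iterating, in at most $\vol(\C_\pi)$ single-changes we reach a $(k-1)$-coloring $\alpha^*$ from $\alpha$; analogously we reach some $(k-1)$-coloring $\beta^*$ from $\beta$.

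The main quantitative step is to bound $\vol(\C_\pi)$. Writing $f_d$ for the number of $d$-simplices of $K$, laminarity on this ground set gives $\#\calL_\pi = O(f_d)$, while trivially each $F_C$ satisfies $\#F_C \leq f_d$. Hence
\[
\vol(\C_\pi) \;=\; \sum_{F_C \in \calL_\pi} \#F_C \;\leq\; \#\calL_\pi \cdot f_d \;=\; O(f_d^{\,2}).
\]
Since $K$ is a simplicial triangulation of the $(k-2)$-sphere, the Upper Bound Theorem for simplicial spheres gives $f_d = O(\#V^{\lfloor (d+1)/2\rfloor}) = O(\#V^{\lfloor (k-1)/2\rfloor})$, so $\vol(\C_\pi) = O(\#V^{2\lfloor (k-1)/2\rfloor})$, and the same bound holds for $\vol(\C_{\pi'})$.

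Finally, between the two $(k-1)$-colorings $\alpha^*$ and $\beta^*$ the folklore argument recalled in the introduction yields a single-change sequence of length $O(\#V)$: one realises the color permutation relating $\alpha^*$ to $\beta^*$ one vertex at a time, using the $k$-th color that is temporarily unused. Concatenating the three pieces $\alpha \rightsquigarrow \alpha^* \rightsquigarrow \beta^* \rightsquigarrow \beta$ then produces a reconfiguration of total length $O(\#V^{2\lfloor (k-1)/2\rfloor})$, as required.

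The principal obstacle is the bound $f_d = O(\#V^{\lfloor (k-1)/2\rfloor})$, which requires invoking the Upper Bound Theorem in the simplicial (not only PL) category for spheres of arbitrary dimension; Stanley's proof via Cohen--Macaulay rings supplies exactly this. The remaining components are routine high-dimensional adaptations of the $d=2$ proof, though one should note that the product bound $\vol(\C_\pi) \leq \#\calL_\pi \cdot \max_{C\in\C_\pi} \#F_C$ is precisely what forces the exponent $2\lfloor (k-1)/2\rfloor$ rather than something smaller absent further structural input.
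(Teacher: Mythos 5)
Your proposal is correct and follows essentially the same approach as the paper, which simply invokes the Upper Bound Theorem to bound the number of top-dimensional simplices by $O(\#V^{\lfloor (k-1)/2 \rfloor})$ and then says the rest is ``much the same argument as Theorem~\ref{thm:diameter}.'' You have merely made explicit the volume-decrement iteration, the laminarity-based bound $\vol(\C_\pi) = O(f_d^2)$, and the concatenation $\alpha \rightsquigarrow \alpha^* \rightsquigarrow \beta^* \rightsquigarrow \beta$, which is exactly what that reference unpacks to; your aside that Stanley's proof covers the general simplicial (not merely PL) case is a correct and worthwhile observation, though the paper leaves it implicit.
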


\begin{proof}
It follows from the upper bound theorem \cite{stanley-ubc} that the number of $(k-2)$-simplices in the triangulation is $O(\#V^{\lfloor (k-1)/2 \rfloor})$.
Then, we obtain the upper bound $O(\#V^{2\lfloor (k-1)/2 \rfloor})$ in much the same argument as Theorem~\ref{thm:diameter}.
\end{proof}

As in \Cref{subsec:2dim_case},
\Cref{thm:highdim_case,thm:diam_high} immediately imply the following:
\begin{corollary}
\label{cor:kRecoloring}
Let $G$ be a $(k-1)$-colorable triangulation of the $(k-2)$-sphere for some $k \geq 4$.
\krecolor for $G$ can be solved
in $O(\# V^{\lfloor (k-1)/2 \rfloor})$ time,
provided one of the input $k$-colorings $\alpha$ and $\beta$ belongs to the $(k-1)$-coloring component of $\mathcal{R}_k(G)$.
In addition, if both $\alpha$ and $\beta$ belong to the $(k-1)$-coloring component,
then we can obtain a reconfiguration sequence from $\alpha$ to $\beta$
$O(\# V^{2\lfloor (k-1)/2 \rfloor})$ time.
\end{corollary}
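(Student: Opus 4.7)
The plan is to mirror the proof of Theorem~\ref{thm:diameter} within the high-dimensional NS-pairing framework already set up for Theorem~\ref{thm:highdim_case}. Given a $k$-coloring $\alpha$ lying in the $(k-1)$-coloring component, the construction carried out in the proof of Theorem~\ref{thm:highdim_case} produces an admissible NS-pairing $\pi$ together with a laminar family $\calL_\pi\subseteq 2^{\St^d(K)}$ (where $d=k-2$), and exhibits a sequence of single-changes, each of which strictly decreases the volume $\vol(\C_\pi)=\sum_{F_C\in\calL_\pi}\#F_C$, eventually reaching a $(k-1)$-coloring $\alpha^\ast$. I would apply this procedure to both $\alpha$ and $\beta$ to obtain $(k-1)$-colorings $\alpha^\ast$ and $\beta^\ast$, then bridge $\alpha^\ast$ to $\beta^\ast$ via the standard permutation argument recalled in the introduction, which realizes any permutation of the $k-1$ used colors through the unused color at a cost of $O(\#V)$ single-changes. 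Concatenating the two outer segments with this middle bridge (and reversing the $\beta\to\beta^\ast$ segment) yields a sequence from $\alpha$ to $\beta$ whose length is controlled by the maximum initial volume.

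Hence the length estimate reduces to bounding $\vol(\C_\pi)$. Laminarity of $\calL_\pi$ on the ground set $\St^d(K)$ gives the standard bound $\#\calL_\pi=O(\#\St^d(K))$ (see e.g.~\cite[Theorem~3.5]{Sch03}), and trivially $\#F_C\le\#\St^d(K)$, so
\begin{equation*}
\vol(\C_\pi)\;\le\;\#\calL_\pi\cdot\#\St^d(K)\;=\;O\bigl(\#\St^d(K)^2\bigr).
\end{equation*}
Stanley's upper bound theorem~\cite{stanley-ubc} for simplicial spheres then yields $\#\St^d(K)=O(\#V^{\lfloor (d+1)/2\rfloor})=O(\#V^{\lfloor (k-1)/2\rfloor})$, and therefore $\vol(\C_\pi)=O(\#V^{2\lfloor (k-1)/2\rfloor})$. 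Since each single-change in the outer segments drops the volume by at least one, each of those segments has length $O(\#V^{2\lfloor (k-1)/2\rfloor})$, which dominates the $O(\#V)$ middle bridge, and the total length bound follows.

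The main obstacle, compared with the two-dimensional case, is precisely this combinatorial control on the number of top-dimensional simplices: in dimension two, Euler's formula supplies $\#F=O(\#V)$ for free, whereas in higher dimensions the number of $d$-simplices can be genuinely polynomial in $\#V$, and the sharp bound is furnished by the upper bound theorem. Once this input is in place, the remaining steps are a direct transcription of the two-dimensional argument into the high-dimensional NS-pairing formalism developed in Section~\ref{subsec:highdim_case}, so no new structural idea beyond Theorem~\ref{thm:highdim_case} and Stanley's bound is required.
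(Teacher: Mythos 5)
Your proposal for the \emph{second} claim of the corollary (constructing a reconfiguration sequence when both $\alpha$ and $\beta$ lie in the $(k-1)$-coloring component) is correct and matches the paper's approach exactly: the paper's proof of \Cref{thm:diam_high} likewise invokes Stanley's upper bound theorem to bound the number of top-dimensional simplices by $O(\#V^{\lfloor(k-1)/2\rfloor})$ and then transcribes the volume-decreasing argument from \Cref{thm:diameter} into the high-dimensional NS-pairing framework, which is precisely what you do, including the standard laminar-family bound $\#\calL_\pi=O(\#\St^d(K))$.

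However, the corollary has a \emph{first} claim that your proposal does not address: that \krecolor for $G$ can be \emph{decided} in $O(\#V^{\lfloor(k-1)/2\rfloor})$ time, assuming one (not necessarily both) of $\alpha,\beta$ lies in the $(k-1)$-coloring component. This part does not require constructing any sequence; it suffices, by \Cref{thm:highdim_case}, to check whether the other coloring satisfies the balanced condition~\eqref{eq:highdim B}, that is, to verify for each $(d-2)$-simplex $\sigma^{d-2}$ that the $+$- and $-$-signed $d$-simplices in $\St^d(\sigma^{d-2})$ are equinumerous, which takes time linear in the number of $d$-simplices, hence $O(\#V^{\lfloor(k-1)/2\rfloor})$ by Stanley's bound. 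Your proposal jumps straight to the volume argument for building a path and never observes that the decision version has this faster, single-pass solution. A related but smaller issue: you carefully bound the \emph{length} of the sequence but say nothing about the \emph{running time} to compute each step; the corollary asserts a time bound, so one should at least remark, as the paper does for \Cref{thm:diameter}, that the NS-pairing, the laminar family, and each single-change with its bookkeeping can be maintained within the stated budget.
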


Note that the proofs of \Cref{thm:highdim_case,thm:diam_high} say that,
if we consider the input size as the size $\# K$ of the simplical complex $K$,
then we can solve \krecolor in linear time in the input size in our setting.

\section{Connectedness of the $4$-coloring reconfiguration graph}
\label{sec:connR}

In this section, we solve the second question posed in Introduction:
\emph{In what $3$-colorable triangulation of the $2$-sphere all
$4$-colorings are single-equivalent?}
To explain the answer, we introduce some notations.
Since we deal with only the case of the $2$-sphere in this section,
we simply use the term a \emph{triangulation}
instead of a triangulation of the $2$-sphere.

A \emph{separating triangle} in a triangulation is
a cycle of length $3$ that does not bound a face.
Note that a triangulation with at least five vertices is $4$-connected if and only if 
it has no separating triangles.
A triangulation with a separating triangle $C$ can be split into two triangulations,
the subgraph induced by the inside of $C$ and that by the outside of $C$,
respectively.
Note that they share $C$.
By iteratively applying this procedure to a triangulation $G$ with $k$ separating triangles,
we obtain a collection of $k+1$ triangulations without separating triangles.
We call the $k+1$ triangulations \emph{$4$-connected pieces} of $G$.
It is known~\cite{Cunningham} that the collection of the $4$-connected pieces are uniquely determined.
It is easy to see that 
$G$ is $3$-colorable
if and only if 
every $4$-connected piece of $G$ is $3$-colorable.

The \emph{octahedral graph} is the $1$-skeleton of the octahedron (\figurename~\ref{fig:octahedral_example1}),
which has six vertices, twelve edges, and eight faces,
and is $3$-colorable.
The following is the main theorem in this section.

\begin{figure}
    \centering
    \includegraphics{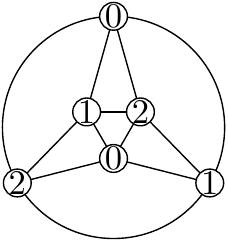}
    \caption{The octahedral graph with a $3$-coloring.}
    \label{fig:octahedral_example1}
\end{figure}

\begin{theorem}
\label{thm:conn}
Let $G$ be a $3$-colorable triangulation.
Then, $\mathcal{R}_4(G)$ is connected
if and only if 
every $4$-connected piece of $G$ is isomorphic to the octahedral graph.
\end{theorem}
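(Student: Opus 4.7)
The plan is to apply Theorem~\ref{thm:main}, which characterizes membership in the $3$-coloring component of $\mathcal{R}_4(G)$ via the balanced condition~(B); thus $\mathcal{R}_4(G)$ is connected if and only if every $4$-coloring of $G$ satisfies~(B). My approach relies on the fact that condition~(B) decomposes additively over the $4$-connected pieces of $G$.

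\textbf{Sufficiency.} Suppose every $4$-connected piece of $G$ is isomorphic to the octahedron. I would first verify by a direct case analysis that every $4$-coloring of the octahedron satisfies~(B): the $4$-colorings of $K_{2,2,2}$ split into $3$-colorings and those that assign two distinct colors to exactly one of the three antipodal pairs and a single color to each of the other two pairs, and in both cases a direct computation of signatures shows each vertex is incident to exactly two $+$-faces and two $-$-faces. Next, given a $4$-coloring $\alpha$ of $G$, I would show~(B) at each vertex $v \in V(G)$ by summing the piece-wise~(B) equations. The faces of $G$ incident to $v$ partition according to which piece contains them, while each cap triangle through $v$ (a separating triangle of $G$ containing $v$) appears as a face of exactly two pieces with opposite orientations, hence opposite signatures; these cancel in the sum. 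Applying Theorem~\ref{thm:main}, $\mathcal{R}_4(G)$ is connected.

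\textbf{Necessity.} Suppose some $4$-connected piece $P$ of $G$ is not the octahedron. I will build a $4$-coloring of $G$ violating~(B). Since the octahedron is the unique $3$-colorable triangulation of $S^2$ with six vertices, $|V(P)| \geq 7$. Euler's formula gives $|E(P)| = 3|V(P)| - 6$, so the average degree in $P$ exceeds $4$; since degrees are even, some vertex $v \in V(P)$ satisfies $\deg_P(v) \geq 6$. The key step is to construct a $4$-coloring $\alpha_P$ of $P$ with $\alpha_P(v) = 3$ such that the colors on $\Lk_P(v)$ realize a cyclic sequence in $\{0, 1, 2\}$ of nonzero winding number in $\mathbb{Z}/3\mathbb{Z}$ (e.g., the pattern $0, 1, 2, 0, 1, 2$ when $\deg_P(v) = 6$). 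Such a link coloring makes every face of $\St^2_P(v)$ carry the same signature, so~(B) fails at $v$ in $P$. I would then extend $\alpha_P$ to a $4$-coloring $\alpha$ of $G$ using the fact that the $4$-connected pieces of $G$ form a tree under the decomposition along separating triangles; on each other piece $Q$ I assign the unique $3$-coloring of $Q$ whose restriction to the shared cap matches the already-chosen colors (the three colors on a triangle face of a $3$-colorable triangulation determine its $3$-coloring). Since $3$-colorings satisfy~(B), the piece-wise decomposition of~(B) at $v$ in $G$ reduces to the~(B)-violating contribution from $P$; hence~(B) fails at $v$ in $G$, and by Theorem~\ref{thm:main}, $\alpha$ lies outside the $3$-coloring component.

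\textbf{Main obstacle.} The crux is constructing $\alpha_P$. Specifying $\alpha_P(v) = 3$ together with a prescribed cyclic coloring of $\Lk_P(v)$ yields only a partial coloring of $P$, and its extension to a valid $4$-coloring of the triangulated disk $P \setminus \St_P(v)$ is not automatic. My plan is to start from a $3$-coloring $\beta$ of $P$ (so $\beta(v) = 0$ forces $\Lk_P(v)$ to alternate between colors $1$ and $2$), modify $\beta$ through a carefully chosen sequence of Kempe-chain swaps to shift $\Lk_P(v)$ into the desired winding pattern while maintaining properness, and finally recolor $v$ to $3$. Justifying that such a sequence exists for every $4$-connected Eulerian triangulation $P \neq$ octahedron---perhaps through a structural or inductive analysis of this class, or by leveraging the freedom afforded by $4$-colorability of planar graphs---is the heart of the proof.
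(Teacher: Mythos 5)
Your plan for reducing to the $4$-connected pieces is essentially the paper's Lemma~\ref{lemma:conn2}, and your verification that the octahedron has no unbalanced $4$-coloring matches Lemma~\ref{lemma:small_tri}(1). One difference worth flagging: in the sufficiency direction you argue that the two cap faces $f_1\subseteq G_1$, $f_2\subseteq G_2$ bounded by a separating triangle carry opposite orientations, hence $\varepsilon_{\alpha_1}(f_1)+\varepsilon_{\alpha_2}(f_2)=0$. This is a clean observation if one is careful that both pieces inherit the same orientation of $S^2$ from $G$, but the paper prefers not to rely on it; instead it observes only that $\varepsilon_{\alpha_1}(f_1)+\varepsilon_{\alpha_2}(f_2)\in\{-2,0,2\}$ and forces this sum to be $0$ by invoking Fisk's mod-$3$ congruence $\#\varepsilon_{\alpha,v}^{-1}(+1)\equiv\#\varepsilon_{\alpha,v}^{-1}(-1)\pmod 3$ (Lemma~\ref{lem:FiskII}). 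Either route works for that direction.

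The genuine gap is in the necessity direction, and you have correctly identified it yourself. You want to exhibit, in every $4$-connected $3$-colorable triangulation $P$ other than the octahedron, a $4$-coloring for which the colors on the link of some vertex $v$ of degree $\geq 6$ wind nontrivially in $\mathbb{Z}/3\mathbb{Z}$; but you have no argument that such a coloring exists, only a speculative plan to Kempe-swap a $3$-coloring into that shape. This existence statement is exactly where the difficulty lives, and it cannot be waved away: prescribing a winding link coloring around $v$ and extending it over the disk $P\setminus\St(v)$ is a global constraint, and Kempe chains anchored far from $v$ need not produce the desired local pattern. The paper replaces this ad hoc search with a structural induction: the Matsumoto--Nakamoto generating theorem (Theorem~\ref{thm:Matsu}) reduces every $4$-connected $3$-colorable triangulation to the octahedron by $4$-contractions and twin-contractions, Lemma~\ref{lemma:small_tri}(2)--(4) pin down the penultimate step as the double wheel of order $8$ (which visibly admits an unbalanced $4$-coloring), and Lemma~\ref{lemma:splitting} shows that the property of admitting an unbalanced $4$-coloring propagates backwards along each $4$-contraction or twin-contraction. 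That triple of lemmas is the content your outline is missing; without the generating theorem or a substitute for it, your necessity argument does not close.
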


Let $G$ be a $3$-colorable triangulation.
We say a $4$-coloring $\alpha$ not belonging to
the $3$-coloring component $\mathcal{R}_4(G)$ to be \emph{unbalanced}
since it follows from Theorem \ref{thm:main} that
$\alpha$
violates
the balanced condition (B).
It is easy to see that 
$\mathcal{R}_4(G)$ is connected if and only if $G$ has no unbalanced $4$-coloring.
Thus, Theorem \ref{thm:conn} is equivalent to the following statement.
\begin{theorem}
\label{thm:conn2}
Let $G$ be a $3$-colorable triangulation.
Then, $G$ has no unbalanced $4$-coloring
if and only if 
every $4$-connected piece of $G$ is isomorphic to the octahedral graph.
\end{theorem}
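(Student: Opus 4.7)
I plan to prove the two directions separately, leveraging the characterization of the $3$-coloring component by the balanced condition~(B) from \Cref{thm:main} together with the additivity of signatures across a separating triangle.

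For the ($\Leftarrow$) direction I induct on the number $t$ of separating triangles of $G$. When $t=0$, $G$ itself is $4$-connected and so equals the octahedron by hypothesis; a direct case check on the $4$-colorings of $K_{2,2,2}$---which are, up to permutation of colors, either $3$-colorings or colorings in which exactly one antipodal pair of vertices carries two distinct colors while the other two pairs are monochromatic---verifies (B) at every vertex. For the inductive step, pick a separating triangle $T$ of $G$ and split $G$ along $T$ into two $3$-colorable triangulations $G_1, G_2$, each with strictly fewer separating triangles and still having every $4$-connected piece isomorphic to the octahedron. Given a $4$-coloring $\alpha$ of $G$, the restrictions $\alpha|_{G_i}$ are balanced by the inductive hypothesis; since $\St^2(v)$ decomposes as the disjoint union of the stars of $v$ within the $G_i$ containing it, the imbalance $\#\varepsilon_{\alpha,v}^{-1}(+1) - \#\varepsilon_{\alpha,v}^{-1}(-1)$ at any vertex $v$ equals the sum of the corresponding imbalances in $G_1$ and $G_2$, hence vanishes.

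For the ($\Rightarrow$) direction, suppose some $4$-connected piece $H$ of $G$ is not the octahedron. I will first construct an unbalanced $4$-coloring $\alpha_H$ of $H$, and then extend it to an unbalanced $4$-coloring of $G$. For the extension, along each separating triangle $T$ bounding $H$ and a neighboring piece $H'$, the restriction $\alpha_H|_T$ is a rainbow $3$-coloring; since $H'$ is $3$-colorable and its $3$-colorings are rigid once any face is fixed, this extends uniquely to a $3$-coloring of $H'$. Propagating along the tree of $4$-connected pieces produces a $4$-coloring $\alpha$ of $G$ in which every piece other than $H$ is $3$-colored and hence balanced. By the same additivity used in the ($\Leftarrow$) direction, the imbalance at any $v \in V(H)$ under $\alpha$ equals its imbalance under $\alpha_H$, so any vertex at which $\alpha_H$ is unbalanced witnesses that $\alpha$ is unbalanced in $G$.

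The main obstacle is the construction of $\alpha_H$. Since $H$ is a $3$-colorable (Eulerian) spherical triangulation with minimum degree $\geq 4$, and Euler's formula forces the unique $4$-regular example to be the octahedron, $H$ contains a vertex $v$ of degree $2k \geq 6$. At a vertex $v$ colored $0$, the imbalance $\#\varepsilon_{\alpha,v}^{-1}(+1) - \#\varepsilon_{\alpha,v}^{-1}(-1)$ equals, up to sign, three times the winding number of the coloring map $\Lk(v) \to C_3$ into the triangle of remaining colors $\{1,2,3\}$; on a cycle of even length $\geq 6$ windings $\pm 2$ are combinatorially realizable. My plan is therefore to produce a $4$-coloring of $H$ whose restriction to $\Lk(v)$ cycles through $\{1,2,3\}$ with winding $\pm 2$: start from a $3$-coloring $\alpha_0$ with $\alpha_0(v)=0$ and $\Lk(v)$ alternating between colors $1$ and $2$, and modify the colors on $\Lk(v)$ and inside the complementary disk $H \setminus \St(v)$ so that the new link coloring is a winding-$\pm 2$ cycle through $\{1,2,3\}$. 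The hexagonal bipyramid exhibits the prototypical realization: with $v$ an apex, the equatorial hexagon colored $1,2,3,1,2,3$, and the other apex kept at $0$, all six faces around $v$ share the same signature, so this $4$-coloring is manifestly unbalanced. The delicate step in full generality is verifying that a winding-$\pm 2$ coloring of the link always extends to a $4$-coloring of $H \setminus \St(v)$; this is where the bulk of the proof effort will go, and I plan to carry it out by exploiting that $H \setminus \St(v)$ is itself a $3$-colorable disk triangulation whose $3$-coloring admits enough Kempe-chain recolorings to rotate its boundary cycle into the required winding pattern.
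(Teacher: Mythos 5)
Your plan has the right global structure (use \Cref{thm:main} and reduce along separating triangles), but it contains two genuine gaps. In the $(\Leftarrow)$ direction your additivity claim is incorrect at vertices on the separating triangle $T$: a vertex $x \in V(T)$ lies in both $G_1$ and $G_2$, and $\St_G^2(x)$ is not the disjoint union of $\St_{G_1}^2(x)$ and $\St_{G_2}^2(x)$ because the two copies of $T$, namely the new faces $f_1$ of $G_1$ and $f_2$ of $G_2$, must be removed. The correct identity is
\[
\#\varepsilon_{\alpha,x}^{-1}(+1)-\#\varepsilon_{\alpha,x}^{-1}(-1)
=
\bigl(\#\varepsilon_{\alpha_1,x}^{-1}(+1)-\#\varepsilon_{\alpha_1,x}^{-1}(-1)\bigr)
+\bigl(\#\varepsilon_{\alpha_2,x}^{-1}(+1)-\#\varepsilon_{\alpha_2,x}^{-1}(-1)\bigr)
-\varepsilon_{\alpha_1}(f_1)-\varepsilon_{\alpha_2}(f_2),
\]
so even if both restrictions are balanced you are only left with a value in $\{-2,0,2\}$, not automatically $0$. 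Closing this requires Fisk's $\bmod\ 3$ lemma (the paper's \Cref{lem:FiskII}), which forces the value to be $0$; without invoking that (or a careful argument about opposite orientations of $f_1$ and $f_2$), the inductive step does not go through.

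In the $(\Rightarrow)$ direction your route is genuinely different from the paper's, and the crucial step is not carried out. You correctly observe that a non-octahedral $4$-connected piece has a vertex of degree $\geq 6$ and that winding $\pm 2$ at that vertex would witness an imbalance, and your hexagonal-bipyramid example is exactly the double wheel of order $8$ that the paper uses, but the ``delicate step'' you flag — that a winding-$\pm 2$ link coloring always extends to a $4$-coloring of $H\setminus\St(v)$ — is precisely the substance of the result, and nothing in the proposal resolves it. (It is far from clear that the Kempe-chain rotation you suggest exists in general; this is essentially Fisk's degree-reduction problem run in reverse, and it is not trivial.) The paper sidesteps this construction entirely: it invokes the Matsumoto--Nakamoto generating theorem to reduce any non-octahedral $4$-connected $3$-colorable triangulation to the double wheel of order $8$ via $4$-contractions and twin-contractions, shows the double wheel has an unbalanced $4$-coloring, and lifts the unbalanced coloring back along each contraction (where, again, a Fisk-type $\bmod\ 3$ argument controls the boundary terms). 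Unless you can actually prove your extension lemma, you would be better off adopting the generating-theorem approach.
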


We will prove \Cref{thm:conn2},
by combining some lemmas
together with the so-called generating theorem.
The following is a well-known property on the number of $+$- and $-$-faces:
\begin{lemma}[{\cite[Lemma~1]{Fis73II}}]
\label{lem:FiskII}
Let $G$ be a triangulation of the $2$-sphere
and $\alpha$ a $4$-coloring of $G$.
Then, for every vertex $v$ of $G$, we have
\[
\#\varepsilon_{\alpha, v}^{-1}(+1) \equiv \#\varepsilon_{\alpha, v}^{-1}(-1)
\pmod{3}.
\]
\end{lemma}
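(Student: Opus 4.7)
The plan is to reduce the claim to a telescoping identity on the link of $v$. Since $G$ is a triangulation of $S^2$, $\Lk(v)$ is a cycle $v_0 v_1 \cdots v_{n-1} v_0$ and $\St^2(v)$ consists of the $n$ faces $f_i := \{v, v_i, v_{i+1}\}$ with indices read modulo $n$. By the symmetry of the four colors I would first reduce to the case $\alpha(v) = 0$, so that $\alpha(v_i) \in \{1,2,3\}$ for every $i$, and $\alpha(v_i) \neq \alpha(v_{i+1})$ because $v_i v_{i+1} \in E$.

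The key step is to translate the signature $\varepsilon_\alpha(f_i)$ into a direction of travel along $\Lk(v)$. Identifying $\{1,2,3\}$ with $\Z/3\Z$ by sending $3 \mapsto 0$ and writing $c_i := \alpha(v_i) \in \Z/3\Z$, one sees that $c_{i+1} - c_i \in \{+1,-1\} \pmod 3$. A direct inspection of the six ordered pairs $(\alpha(v_i), \alpha(v_{i+1}))$ against the defining list of $\varepsilon_\alpha$ will establish the bijection
\[
\varepsilon_\alpha(f_i) = +1 \iff c_{i+1} - c_i \equiv -1 \pmod 3,
\qquad
\varepsilon_\alpha(f_i) = -1 \iff c_{i+1} - c_i \equiv +1 \pmod 3.
\]
Writing $p := \#\varepsilon_{\alpha,v}^{-1}(+1)$ and $q := \#\varepsilon_{\alpha,v}^{-1}(-1)$, this bijection yields
\[
\sum_{i=0}^{n-1} (c_{i+1} - c_i) \equiv q - p \pmod 3.
\]
Since the left-hand side telescopes to $0$ around the cycle $\Lk(v)$, we conclude $p \equiv q \pmod 3$, as desired.

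The only delicate point is the translation in the middle step: one must confirm that, under the counterclockwise convention used to define $[\alpha(f)]$, the sign $\varepsilon_\alpha(f_i)$ really depends only on whether $c_{i+1} - c_i$ equals $+1$ or $-1$ in $\Z/3\Z$ and not on the specific pair of colors involved. This is a finite bookkeeping exercise across the six ordered pairs listed above. Moreover, since the conclusion $p \equiv q \pmod 3$ is symmetric under $p \leftrightarrow q$, even if the orientation convention is reversed from what I described, the argument still applies verbatim; hence the statement reduces entirely to the cycle-sum identity and involves no genuine obstacle.
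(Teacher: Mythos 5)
The paper does not prove this lemma itself; it simply cites Fisk's Lemma 1 from \cite{Fis73II}, so there is no internal proof to compare against. Your argument is correct and is essentially the standard proof that Fisk himself uses, presented here as a clean local winding-number computation. The reduction to $\alpha(v)=0$ is valid because a color permutation $\sigma$ replaces every signature $\varepsilon_\alpha(f)$ by $\sgn(\sigma)\,\varepsilon_\alpha(f)$, which at worst swaps $\#\varepsilon_{\alpha,v}^{-1}(+1)$ with $\#\varepsilon_{\alpha,v}^{-1}(-1)$ and so preserves the congruence; the bijection between $\varepsilon_\alpha(f_i)$ and the sign of $c_{i+1}-c_i$ in $\Z/3\Z$ checks out against the definition via $\partial[0123]$ (the six ordered pairs do verify it, up to the global sign you already flagged as immaterial); and the cyclic telescoping of $\sum_i(c_{i+1}-c_i)$ over $\Lk(v)$ is exactly $0$ in $\Z/3\Z$, giving $p\equiv q\pmod 3$. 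No gap.
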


The following deals with splitting a triangulation to obtain a $4$-connected piece.
\begin{lemma}
\label{lemma:conn2}
Let $G$ be a $3$-colorable triangulation with a separating triangle $C$,
and let $G_1$ and $G_2$ be the two triangulations obtained by splitting along $C$.
Then $G$ has an unbalanced $4$-coloring
if and only if 
$G_1$ or $G_2$ has an unbalanced $4$-coloring.
\end{lemma}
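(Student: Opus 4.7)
The plan is to translate everything into the signed imbalance $I_H(v,\gamma) := \#\varepsilon_{\gamma,v}^{-1}(+1) - \#\varepsilon_{\gamma,v}^{-1}(-1)$ of a 4-coloring $\gamma$ of a triangulation $H$, since by \Cref{thm:main} a 4-coloring is unbalanced iff $I_H(v,\gamma) \neq 0$ for some vertex $v$. The whole lemma then reduces to a vertex-by-vertex comparison of $I_G(v,\alpha)$ with $I_{G_1}(v,\alpha_1)$ and $I_{G_2}(v,\alpha_2)$, where $\alpha_i$ is a 4-coloring on $G_i$ and $\alpha$ is their common gluing.

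Given a 4-coloring $\alpha$ of $G$ and its restrictions $\alpha_i := \alpha|_{V(G_i)}$, the faces around a vertex $v$ behave as follows. For $v \notin V(C)$, all faces of $\St^2_G(v)$ lie on one side of $C$ and $\St^2_G(v) = \St^2_{G_i}(v)$ for the corresponding piece, giving $I_G(v,\alpha) = I_{G_i}(v,\alpha_i)$. For $v\in V(C)$, one has the disjoint decomposition $\St^2_G(v) = (\St^2_{G_1}(v)\setminus\{C\}) \sqcup (\St^2_{G_2}(v)\setminus\{C\})$, and the topological fact that the face $C$ occupies opposite sides of $C$ in $G_1$ and in $G_2$ forces the induced cyclic orderings of $V(C)$, seen from outside the respective $2$-spheres, to be reverses of each other. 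Hence $\varepsilon_{\alpha_1}(C) = -\varepsilon_{\alpha_2}(C)$, and summing signatures yields the key identity
\[
I_G(v,\alpha) \;=\; I_{G_1}(v,\alpha_1) + I_{G_2}(v,\alpha_2) \qquad (v \in V(C)).
\]

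The forward direction is now immediate: an unbalanced $\alpha$ of $G$ witnessed at $v$ forces $I_{G_1}(v,\alpha_1)\neq 0$ or $I_{G_2}(v,\alpha_2)\neq 0$, so $\alpha_1$ or $\alpha_2$ is unbalanced. For the backward direction, assume WLOG that $G_1$ has an unbalanced 4-coloring $\alpha_1$ witnessed at $v$. Since $G$ is $3$-colorable so is $G_2$ (any 3-coloring of $G$ restricts), and we pick a $3$-coloring $\alpha_2^\circ$ of $G_2$; as $\alpha_1|_{V(C)}$ and $\alpha_2^\circ|_{V(C)}$ are both bijections from $V(C)$ onto $3$-element subsets of $\{0,1,2,3\}$, a suitable permutation of colors converts $\alpha_2^\circ$ into a $3$-coloring $\alpha_2$ of $G_2$ agreeing with $\alpha_1$ on $V(C)$. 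The gluing $\alpha$ is then a valid 4-coloring of $G$. Because $\alpha_2$ is a $3$-coloring, it satisfies the balanced condition (B), so $I_{G_2}(\cdot,\alpha_2) \equiv 0$; the identity above gives $I_G(v,\alpha) = I_{G_1}(v,\alpha_1) \neq 0$ whether $v$ lies on $C$ or strictly inside $D_1$. Thus $\alpha$ is unbalanced.

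The only nontrivial ingredient is the orientation identity $\varepsilon_{\alpha_1}(C) = -\varepsilon_{\alpha_2}(C)$; I expect this to be the main sticking point requiring a careful picture, as it is the step that encodes how $G_1$ and $G_2$ glue back to $G$ along $C$. Everything else is bookkeeping: the vertex-wise decomposition of $\St^2_G(v)$ when $v\in V(C)$, and the standard fact (via \Cref{lemma:3-coloring}~(2) and Eulerianness) that a $3$-coloring automatically satisfies (B).
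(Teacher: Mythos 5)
Your proof is correct, and it takes a genuinely different route from the paper's at the one place where the argument is nontrivial: the vertices of the separating triangle. You establish the orientation identity $\varepsilon_{\alpha_1}(f_1) = -\varepsilon_{\alpha_2}(f_2)$ for the two capping faces and deduce the clean additive identity $I_G(v,\alpha) = I_{G_1}(v,\alpha_1) + I_{G_2}(v,\alpha_2)$ for $v \in V(C)$, from which both directions of the lemma drop out immediately. The paper does \emph{not} determine the sign relation between $\varepsilon_{\alpha_1}(f_1)$ and $\varepsilon_{\alpha_2}(f_2)$; it writes the more naive identity $I_G(v,\alpha) = I_{G_1}(v,\alpha_1) - \varepsilon_{\alpha_1}(f_1) + I_{G_2}(v,\alpha_2) - \varepsilon_{\alpha_2}(f_2)$, observes that the correction term lies in $\{-2,0,2\}$, and then invokes Fisk's congruence (Lemma~\ref{lem:FiskII}, $I \equiv 0 \pmod 3$) to kill it. Your route trades that number-theoretic bridge for a purely topological/orientation observation, which is arguably tidier and avoids a citation; the paper's route is slightly more robust because it never needs to pin down the relative orientation of the two pieces. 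As for the step you flagged as the sticking point: it does hold. With $G_1$ and $G_2$ oriented compatibly with $G$ on their shared faces, the face $f_1$ occupies the region of $D_2$ on $G_1$'s sphere and $f_2$ occupies the region of $D_1$ on $G_2$'s sphere, so traversing $C$ with $f_1$ on the left is the reverse of traversing $C$ with $f_2$ on the left; hence the two cyclic orderings of $V(C)$ (and of the colors, once you've matched $\alpha_1$ and $\alpha_2$ on $V(C)$) are reverses, giving $\varepsilon_{\alpha_1}(f_1) = -\varepsilon_{\alpha_2}(f_2)$. The rest of your proof (the unconditional equality $I_G = I_{G_i}$ for $v \notin V(C)$, recoloring a $3$-coloring of $G_2$ to agree with $\alpha_1$ on $V(C)$, and the vanishing of $I_{G_2}(\cdot,\alpha_2)$ for a $3$-coloring) matches the paper's bookkeeping.
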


\begin{proof}
For each $i =1,2$, let $f_i$ be the face of $G_i$ bounded by $C$, see \figurename~\ref{fig:separating_triangle1}.
Note that for each vertex $x$ of $G$,
we have 
$\St_{G}^2(x) = \St_{G_i}^2(x)$ if $x \in V(G_i) \setminus V(C)$ for some $i =1,2$,
and 
$\St_G^2(x) = (\St_{G_1}^2(x)\setminus \{f_1\}) \cup (\St_{G_2}^2(x)\setminus \{f_2\})$
otherwise.

\begin{figure}
    \centering
    \includegraphics{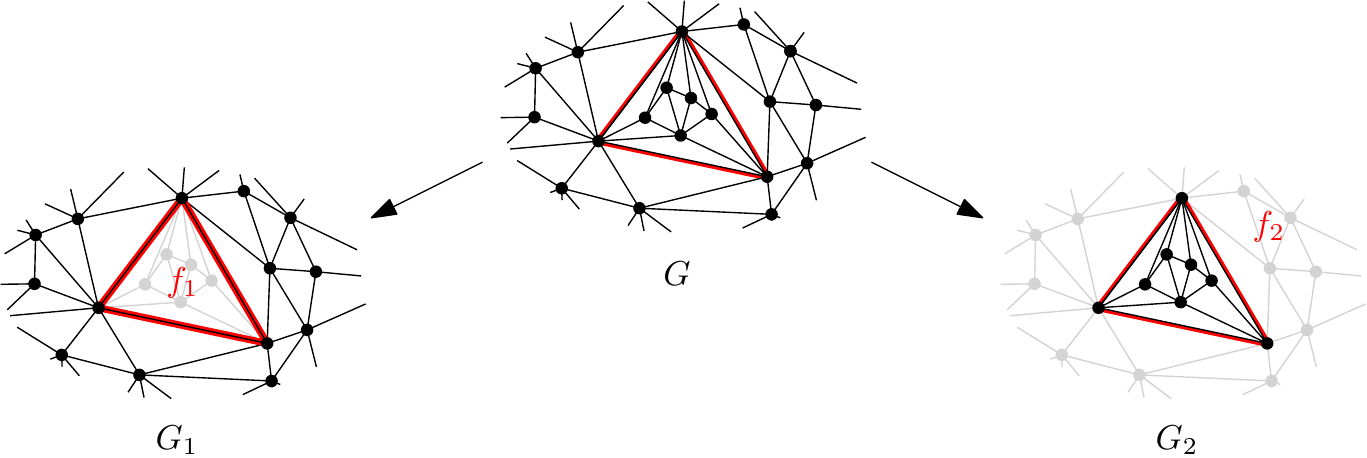}
    \caption{Proof of \Cref{lemma:conn2}. The separating triangle $C$ is highlighted by red lines.}
    \label{fig:separating_triangle1}
\end{figure}

We first prove the if part.
By symmetry, suppose that $G_1$ has an unbalanced $4$-coloring $\alpha_1$.
Since $G$ is $3$-colorable, $G_2$ admits a $3$-coloring, say $\alpha_2$.
Since the three vertices of $C$ receive three distinct colors 
both in $\alpha_1$ and in $\alpha_2$,
we may assume that the colors of the vertices of $C$ coincide in $\alpha_1$ and in $\alpha_2$.
We construct a $4$-coloring $\alpha$ of $G$ as follows:
$\alpha(x) = \alpha_1(x)$ if $x \in V(G_1)$, and 
$\alpha(x) = \alpha_2(x)$ otherwise.
Since $\alpha_1$ is an unbalanced $4$-coloring of $G_1$,
it follows from Theorem \ref{thm:main} that there exists a vertex $x$ of $G_1$ 
that
violates
the equality~\eqref{eq:iff-condition} for $\alpha_1$.
If $x$ is contained in $V(G_1) \setminus V(C)$,
then 
$x$
violates
the equality~\eqref{eq:iff-condition} even for $\alpha$,
and hence $\alpha$ is an unbalanced $4$-coloring of $G$ by Theorem \ref{thm:main}.
Suppose $x \in V(C)$.
By the definition of $\alpha$, 
we have
\begin{eqnarray*}
\lefteqn{
\#\varepsilon_{\alpha, x}^{-1}(+1) - \#\varepsilon_{\alpha, x}^{-1}(-1)
}
\\
&=&
\#\varepsilon_{\alpha_1, x}^{-1}(+1) - \#\varepsilon_{\alpha_1, x}^{-1}(-1)
- \varepsilon_{\alpha_1}(f_1)
+ \#\varepsilon_{\alpha_2, x}^{-1}(+1) - \#\varepsilon_{\alpha_2, x}^{-1}(-1)
- \varepsilon_{\alpha_2}(f_2).
\end{eqnarray*}
Since $\alpha_2$ is a $3$-coloring of $G_2$,
we obtain
$
\#\varepsilon_{\alpha_2, x}^{-1}(+1)
=
\#\varepsilon_{\alpha_2, x}^{-1}(-1)
$.
Thus, if $x$ satisfies the equality~\eqref{eq:iff-condition} even for $\alpha$,
then  
$\#\varepsilon_{\alpha, x}^{-1}(+1)
=
\#\varepsilon_{\alpha, x}^{-1}(-1)
$,
and therefore,
\begin{eqnarray*}
\#\varepsilon_{\alpha_1, x}^{-1}(+1)
- 
\#\varepsilon_{\alpha_1, x}^{-1}(-1)
=
\varepsilon_{\alpha_1}(f_1)
+ \varepsilon_{\alpha_2}(f_2)
\in \{-2, 0, 2\}.
\end{eqnarray*}
By \Cref{lem:FiskII},
we have
$
\#\varepsilon_{\alpha_1, x}^{-1}(+1)
- 
\#\varepsilon_{\alpha_1, x}^{-1}(-1)
\equiv 0 \pmod{3}$,
and hence $x$ satisfies the equality~\eqref{eq:iff-condition} for $\alpha_1$,
a contradiction.
Therefore, 
$x$
violates
the equality~\eqref{eq:iff-condition} even for $\alpha$,
and it follows from Theorem \ref{thm:main} that 
$\alpha$ is an unbalanced $4$-coloring of $G$.
This proves the if part.

We next prove the only-if part by contrapositive.
Suppose that $G_i$ has no unbalanced $4$-coloring for any $i = 1,2$.
Let $\alpha$ be a $4$-coloring of $G$,
and we show that $\alpha$ is not unbalanced.
By Theorem \ref{thm:main},
it suffices to show that 
each vertex $v$ of $G$ satisfies the equality~\eqref{eq:iff-condition}.
For each $i  =1,2$,
let $\alpha_i$ be the restriction of $\alpha$ to the vertices of $G_i$.
Since $\alpha_i$ is not an unbalanced $4$-coloring of $G_i$,
each vertex in $V(G_i) \setminus V(C)$
satisfies the equality~\eqref{eq:iff-condition} for $\alpha_i$.
Let $x$ be a vertex contained in $C$.
For any $i  =1,2$,
since $\alpha_i$ is not an unbalanced $4$-coloring of $G_i$,
it follows from the equality~\eqref{eq:iff-condition} for $\alpha_i$ that
\[
\#\varepsilon_{\alpha_i, x}^{-1}(+1)
- 
\#\varepsilon_{\alpha_i, x}^{-1}(-1)
= 0.
\]
Therefore, we have
\begin{eqnarray*}
\lefteqn{
\#\varepsilon_{\alpha, x}^{-1}(+1) - \#\varepsilon_{\alpha, x}^{-1}(-1)
}\\
&=&
\#\varepsilon_{\alpha_1, x}^{-1}(+1)
- 
\#\varepsilon_{\alpha_1, x}^{-1}(-1)
-\varepsilon_{\alpha_1}(f_1)
+ 
\#\varepsilon_{\alpha_2, x}^{-1}(+1)
-
\#\varepsilon_{\alpha_2, x}^{-1}(-1)
- \varepsilon_{\alpha_2}(f_2)
\\
&\in& \{-2, 0, 2\}.
\end{eqnarray*}
By \Cref{lem:FiskII},
we obtain
$
\#\varepsilon_{\alpha, x}^{-1}(+1) - 
\#\varepsilon_{\alpha, x}^{-1}(-1)
\equiv 0 \pmod{3}$,
and hence $x$ satisfies the equality~\eqref{eq:iff-condition} for $\alpha$.
This proves that
the $4$-coloring $\alpha$ of $G$ is not  unbalanced,
and hence this completes the proof of the only-if part.
\end{proof}

By Lemma \ref{lemma:conn2},
we can focus on a $4$-connected $3$-colorable triangulation $G$.

We now define two operations to reduce $G$
to a smaller triangulation $G'$ as follows.
Here a cycle consisting of $\ell$ vertices $v_1,v_2, \dots , v_{\ell}$
and ${\ell}$ edges $\{v_1, v_{2}\}, \{v_2, v_{3}\}, \dots , \{v_{\ell-1}, v_{\ell}\}, \{v_{\ell}, v_1\}$
is denoted by the sequence $v_1v_2 \cdots v_{\ell}v_1$.
Let $v$ be a vertex of degree four in $G$ and let $w_1w_2w_3w_4w_1$ be the cycle that forms the link of $v$.
The \emph{$4$-contraction} of $v$ at $\{w_1, w_3\}$,
illustrated in \figurename~\ref{fig:four_contraction},
is to remove $v$,
identify the vertices $w_1$ and $w_3$, and
replace the two pairs of multiple edges obtained from $\{ \{w_1,w_2\}, \{w_2,w_3\}\}$ and $\{\{w_1,w_4\}, \{w_3,w_4\}\}$
with two single edges, respectively.
\begin{figure}
\centering
\includegraphics{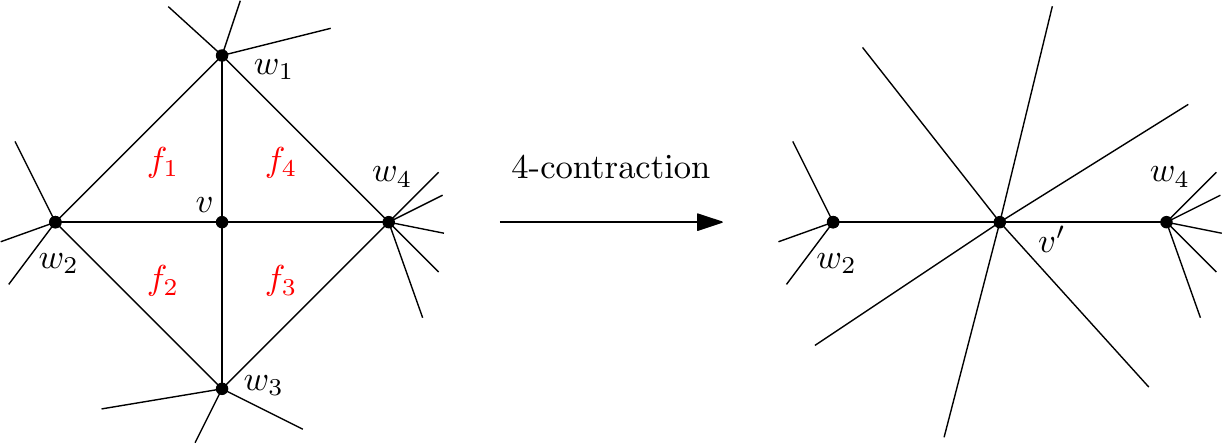}
\caption{The $4$-contraction of $v$ at $\{w_1, w_3\}$.}
\label{fig:four_contraction}
\end{figure}
Let $u$ and $v$ be adjacent vertices of degree four,
where $w_1w_2w_3vw_1$ and $w_1uw_3w_4w_1$ are the cycles
that form the links of $u$ and $v$, respectively.
The \emph{twin-contraction} of $\{u,v\}$ at $\{w_1, w_3\}$,
illustrated in \figurename~\ref{fig:twin_contraction},
is to remove $u$ and $v$,
identify the vertices $w_1$ and $w_3$, and
replace the two pairs of multiple edges obtained from $\{\{w_1,w_2\}, \{w_2,w_3\}\}$ and $\{\{w_1,w_4\}, \{w_3,w_4\}\}$
with two single edges, respectively.
\begin{figure}
\centering
\includegraphics{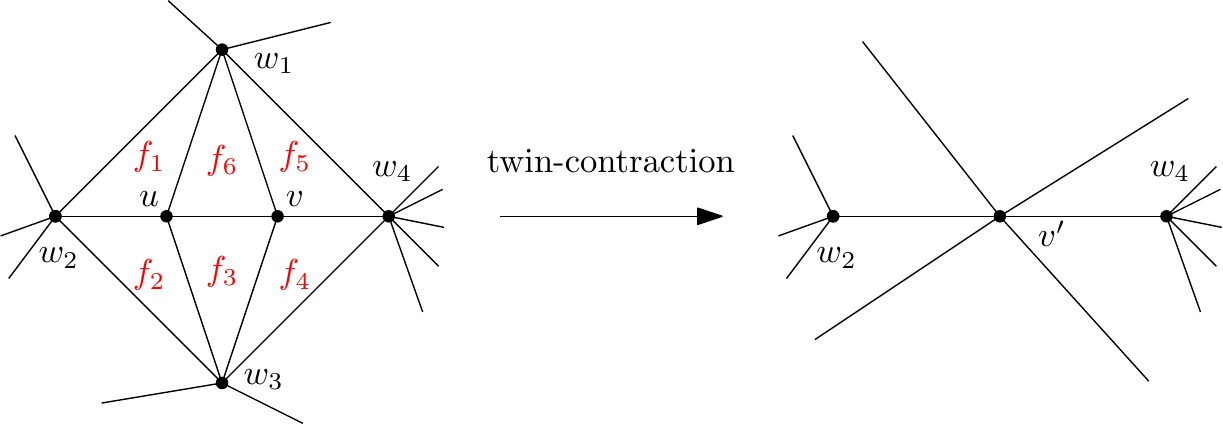}
\caption{The twin-contraction of $\{u, v\}$ at $\{w_1, w_3\}$.}
\label{fig:twin_contraction}
\end{figure}

Notice that we do not perform these operations 
if they give rise to multiple edges.
Matsumoto and Nakamoto proved the following generating theorem.

\begin{theorem}[\cite{Matsumoto}]
\label{thm:Matsu}
For every $4$-connected $3$-colorable triangulation $G$,
there exists a sequence $G_0, G_1, \dots , G_{\ell}$
from $G_{0} := G$
such that 
$G_{\ell}$ is the octahedral graph,
$G_i$ is a $4$-connected $3$-colorable triangulation for $0 \leq i \leq \ell$, 
and
$G_i$ is obtained from $G_{i-1}$ by either a $4$-contraction or a twin-contraction
for $1 \leq i \leq \ell$.
\end{theorem}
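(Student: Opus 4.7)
My plan is to prove Theorem~\ref{thm:Matsu} by induction on $\#V(G)$. The base case $\#V(G)=6$ is settled by Euler's formula: a triangulation of $S^2$ on $6$ vertices has $12$ edges, so the average degree is $4$, and combined with the minimum-degree bound $\deg\geq 3$ for $S^2$-triangulations and the evenness of all degrees (forced by $3$-colorability), every vertex must have degree exactly $4$. The unique $4$-regular triangulation of $S^2$ on $6$ vertices is the octahedral graph, so taking $\ell=0$ handles the base case.

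For the inductive step with $n:=\#V(G)\geq 7$, Euler's formula yields average degree $6-12/n<6$, and parity forces the existence of a vertex $v$ of degree exactly $4$. Let its link be the cycle $w_1 w_2 w_3 w_4 w_1$. Since $G$ is $4$-connected and thus has no separating triangle, neither $\{w_1,w_3\}$ nor $\{w_2,w_4\}$ lies in $E(G)$---otherwise $\{v,w_1,w_3\}$ or $\{v,w_2,w_4\}$ would be a non-facial triangle and hence separating. Consequently both candidate $4$-contractions at $v$ avoid multi-edges, and a direct degree computation ($\deg_{G'}(w)=\deg_G(w_1)+\deg_G(w_3)-4$, $\deg_{G'}(w_i)=\deg_G(w_i)-2$ for $i\in\{2,4\}$, other degrees unchanged) shows that the resulting triangulation $G'$ still has every degree even, hence remains $3$-colorable.

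The heart of the argument is preserving $4$-connectedness under the contraction. A $4$-contraction at $\{w_1,w_3\}$ creates a new separating triangle in $G'$ precisely when there exist vertices $x,y$ with $x\in N_G(w_1)$, $y\in N_G(w_3)$, $\{x,y\}\in E(G)$, such that neither $\{w_1,x,y\}$ nor $\{w_3,x,y\}$ is a face of $G$---a ``crossing edge'' spanning the two sides of the would-be split. I plan to show that if both $4$-contractions at $v$ are blocked by such crossing edges, then a local analysis near $v$ yields either another degree-$4$ vertex admitting a valid $4$-contraction, or a pair of adjacent degree-$4$ vertices admitting a valid twin-contraction (again free of multi-edges by $4$-connectedness, and preserving evenness of degrees by a similar count). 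The main obstacle will be this last case: verifying that when every individual $4$-contraction is globally obstructed, a valid twin-contraction still exists. I expect to address this by combining a discharging-style argument on the degree sequence (the total deficit $6n-12-\sum_v\deg(v)=0$ combined with the scarcity of high-degree vertices forces clustering among degree-$4$ vertices) with a local enumeration of the few possible crossing-edge configurations, exploiting the rigidity imposed jointly by $4$-connectedness and $3$-colorability. Once a valid $4$- or twin-contraction is produced, the inductive hypothesis applied to $G'$ supplies the remaining reduction sequence, which prepended with $G$ finishes the proof.
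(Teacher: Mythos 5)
The paper does not prove \Cref{thm:Matsu}; it is imported verbatim as a black box from Matsumoto and Nakamoto~\cite{Matsumoto}, so there is no internal proof to compare your attempt against. Your base case ($n=6$ forces the octahedron) and the existence of a degree-$4$ vertex for $n\geq 7$ via Euler's formula and evenness are both correct, and the parity bookkeeping for degrees under a $4$- or twin-contraction is sound. However, one of your intermediate claims is false: the non-adjacency of $\{w_1,w_3\}$ and of $\{w_2,w_4\}$ (which does follow from $4$-connectedness) does \emph{not} by itself guarantee that the $4$-contraction at $\{w_1,w_3\}$ avoids multiple edges. The contraction also produces a forbidden double edge whenever $w_1$ and $w_3$ share a common neighbor $u\notin\{v,w_2,w_4\}$, and in a $4$-connected triangulation the set $\{w_1,u,w_3\}$ need not be a triangle (since $w_1w_3\notin E$) and hence need not be a $3$-cut, so nothing in your hypotheses rules this out. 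Your later ``crossing edge'' criterion for the loss of $4$-connectedness is a separate obstruction, so there are genuinely two distinct things that can block a given $4$-contraction, and your enumeration starts from an already incomplete list.

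The larger issue is that the heart of the argument is left as a declaration of intent. You write that a ``local analysis near $v$'' and a ``discharging-style argument'' will produce either an unobstructed $4$-contraction at some degree-$4$ vertex or an admissible twin-contraction when everything at $v$ is blocked, but you do not carry out this case analysis, and you acknowledge as much (``The main obstacle will be this last case\ldots I expect to address this by\ldots''). This is precisely the nontrivial content of the Matsumoto--Nakamoto generating theorem: showing that among all degree-$4$ vertices of a $4$-connected even triangulation on at least seven vertices there is always at least one place where a $4$-contraction or twin-contraction can be performed without creating multi-edges, without destroying $4$-connectedness, and while preserving evenness. Until that combinatorial classification of obstructed neighborhoods is supplied (and the twin-contraction case is justified in full, including why twin-contractions are needed at all and why they cannot all be simultaneously blocked), the proposal is an outline rather than a proof.
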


Since the next lemma deals with only triangulations of order at most nine,
it can be easily shown.
For convenience, its proof is given in \Cref{sec:ap:proof}.
Recall that 
the \emph{double wheel} is a triangulation obtained from a cycle on the plane
by adding a new vertex to each of the inside and the outside of the cycle 
and connecting the added vertices to the all vertices of the cycle.
Note that the octahedral graph is the double wheel of order $6$.

\begin{lemma}
\label{lemma:small_tri}
Each of the following holds.
\begin{itemize}
\item[{\rm (1)}]
The octahedral graph admits no unbalanced $4$-coloring.
\item[{\rm (2)}]
If the octahedral graph is obtained
from a $3$-colorable triangulation $G$ by a $4$-contraction,
then 
$G$
is isomorphic to the double wheel of order $8$.
\item[{\rm (3)}]
The double wheel of order $8$ admits an unbalanced $4$-coloring.
\item[{\rm (4)}]
If the octahedral graph is obtained 
from a $3$-colorable triangulation $G$ by a twin-contraction,
then $G$ has a separating triangle.
\end{itemize}
\end{lemma}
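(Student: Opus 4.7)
All four claims concern small triangulations, and I plan to establish them by direct structural analysis combined with \Cref{thm:main} and \Cref{lem:FiskII}. For (1), the decisive observation is that every vertex of the octahedral graph has degree $4$, so around any vertex $v$ we have $\#\varepsilon_{\alpha,v}^{-1}(+1) + \#\varepsilon_{\alpha,v}^{-1}(-1) = 4$. Combined with \Cref{lem:FiskII}, which forces $\#\varepsilon_{\alpha,v}^{-1}(+1) \equiv \#\varepsilon_{\alpha,v}^{-1}(-1) \pmod{3}$, the only possibility is $\#\varepsilon_{\alpha,v}^{-1}(+1) = \#\varepsilon_{\alpha,v}^{-1}(-1) = 2$. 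Hence every $4$-coloring of the octahedral graph satisfies the balanced condition (B), and \Cref{thm:main} yields the claim.

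Parts (2) and (4) are both handled by analyzing the inverse of the contraction operation on the octahedron. In each case I fix the vertex $x$ of the octahedral graph that results from identifying $w_1$ and $w_3$, pick a pair of vertices $w_2, w_4$ on the link $4$-cycle of $x$, and split the remaining two link vertices into sets $A, C$ of ``private'' neighbors of $w_1$ and $w_3$ respectively, so that $|A| + |C| = 2$. For a $4$-contraction one has $\deg w_1 = 3 + |A|$ and $\deg w_3 = 3 + |C|$, so the requirement that all degrees be even (for $3$-colorability) forces $|A| = |C| = 1$; equivalently, $w_2$ and $w_4$ are opposite on the link of $x$. Tracking the resulting adjacencies inherited from the octahedron shows that $G$ has exactly two non-adjacent vertices of degree $6$ (each adjacent to the other six vertices) and that the six degree-$4$ vertices form a $6$-cycle; this is the double wheel of order $8$, proving (2). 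For a twin-contraction one instead has $\deg w_1 = 4 + |A|$ and $\deg w_3 = 4 + |C|$, so evenness forces $\{|A|, |C|\} = \{0, 2\}$, and by symmetry I may take $|A| = 0$, meaning $w_2$ and $w_4$ are \emph{adjacent} on the link of $x$. With this split, $w_2, w_4$ remain adjacent in $G$ and both adjacent to $w_3$, so $\{w_2, w_3, w_4\}$ spans a triangle; listing the six faces incident to the degree-$6$ vertex $w_3$ shows that this triangle is not a face, and deleting $w_2, w_3, w_4$ separates $\{u, v, w_1\}$ from the remaining three vertices, proving (4).

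For (3), I plan to exhibit an explicit unbalanced $4$-coloring of the double wheel of order $8$. Letting $a, b$ denote the two apex vertices and $c_1, \dots, c_6$ the vertices of the equatorial $6$-cycle, set $\alpha(a) = \alpha(b) = 0$ and $(\alpha(c_1), \dots, \alpha(c_6)) = (1, 2, 3, 1, 2, 3)$, which is a proper $4$-coloring because $a$ and $b$ are non-adjacent. The six faces around $a$ carry color sequences $[0,1,2], [0,2,3], [0,3,1], [0,1,2], [0,2,3], [0,3,1]$, and each of these cyclic orderings lies in the same one of the two sign classes defined in \Cref{subsec:2dim_case}, so the six faces around $a$ share a common sign and give $|\#\varepsilon_{\alpha,a}^{-1}(+1) - \#\varepsilon_{\alpha,a}^{-1}(-1)| = 6 \neq 0$. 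This violates (B) at $a$, so by \Cref{thm:main} the coloring $\alpha$ is unbalanced. The main obstacle I foresee is not conceptual but the careful bookkeeping in (2) and (4): one must track how the cyclic order of the link of $x$ in the octahedron determines the planar embedding of the reconstructed $G$, and then verify both that the reconstructed graph has the claimed structure (in (2)) and that the identified triangle is non-facial and genuinely separating (in (4)). Since only a handful of combinatorial choices arise, these verifications remain finite and direct.
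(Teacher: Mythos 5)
Your proposal is correct, and the overall structure --- reverse the contraction at a vertex $x$ of the octahedron, constrain the split by parity, then read off the reconstructed graph --- matches the paper's argument, which is presented entirely via figures (the paper displays the two possible preimages under each contraction and simply notes which one has odd-degree vertices). Your treatments of (2), (3), and (4) are thus essentially the paper's reasoning made explicit; the bookkeeping you defer (the exact adjacency lists of the reconstructed $8$- and $9$-vertex triangulations, and the verification that $\{w_2,w_3,w_4\}$ is non-facial and separating) is exactly what the paper's figures encode, and the numbers do check out: in (2) the split $\{|A|,|C|\}=\{1,1\}$ forced by parity produces two antipodal degree-$6$ vertices and an equatorial $6$-cycle, and in (4) the forced split $\{0,2\}$ makes $\{w_2,w_3,w_4\}$ a triangle whose deletion splits $G$ into $\{w_1,u,v\}$ and $\{p,q,y\}$.

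Where you genuinely depart from the paper, and in a way worth noting, is part (1). The paper exhausts the (two, up to symmetry) $4$-colorings of the octahedron and checks directly that each is balanced. You instead observe that every vertex has degree $4$, so $\#\varepsilon_{\alpha,v}^{-1}(+1)+\#\varepsilon_{\alpha,v}^{-1}(-1)=4$, and then invoke \Cref{lem:FiskII} to force both counts to be $2$, since $(2,2)$ is the only pair of nonnegative integers summing to $4$ that are congruent mod $3$. This avoids any case enumeration and is arguably cleaner: it makes transparent that the balanced condition at a degree-$4$ vertex is automatic, which is the real reason the octahedron is special. Your explicit unbalanced coloring in (3) (apexes both colored $0$, equator $1,2,3,1,2,3$, giving all six faces around an apex the same sign) is correct and is essentially the coloring the paper exhibits in its figure.
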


We prove the next lemma
for a $4$-contraction and a twin-contraction.
\begin{lemma}
\label{lemma:splitting}
Let $G$ be a $4$-connected $3$-colorable triangulation,
and let $G'$ be a $4$-connected $3$-colorable triangulation obtained from $G$
by either a $4$-contraction or a twin-contraction.
If $G'$ has an unbalanced $4$-coloring, then so does $G$.
\end{lemma}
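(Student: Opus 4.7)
The plan is to lift an unbalanced $4$-coloring $\alpha'$ of $G'$ to a $4$-coloring $\alpha$ of $G$ that is also unbalanced, and then apply Theorem~\ref{thm:main}. In the $4$-contraction case let $v$ be the degree-$4$ vertex in $G$ whose link is $w_1w_2w_3w_4w_1$, and in the twin-contraction case let $u,v$ be the pair of adjacent degree-$4$ vertices in $G$; in either case $w_1$ and $w_3$ are identified into a single vertex $w$ of $G'$. I define the lift $\alpha$ by setting $\alpha(w_1)=\alpha(w_3):=\alpha'(w)$, keeping $\alpha=\alpha'$ on every other old vertex, and choosing $\alpha(v)$ (and $\alpha(u)$ in the twin case) to avoid the colors already appearing on its neighbors. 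This choice always exists, since each new vertex only needs to avoid at most three of the four colors, so $\alpha$ is a valid $4$-coloring of $G$.

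The key observation is the following sign-pairing: because $\alpha(w_1)=\alpha(w_3)$, any two adjacent affected faces whose non-shared vertices are exactly $w_1$ and $w_3$ must carry opposite signatures under $\varepsilon_\alpha$, by the very definition of $\varepsilon_\alpha$. In the $4$-contraction case the four faces around $v$ decompose into two such opposite-sign pairs, namely $(\{v,w_1,w_2\},\{v,w_2,w_3\})$ sharing $\{v,w_2\}$ and $(\{v,w_3,w_4\},\{v,w_4,w_1\})$ sharing $\{v,w_4\}$. In the twin-contraction case the six affected faces decompose into three such pairs: $(\{u,w_1,w_2\},\{u,w_2,w_3\})$ sharing $\{u,w_2\}$, $(\{u,v,w_1\},\{u,v,w_3\})$ sharing $\{u,v\}$, and $(\{v,w_3,w_4\},\{v,w_4,w_1\})$ sharing $\{v,w_4\}$.

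A short Euler-formula check shows that the contractions remove the affected faces without introducing any new face in $G'$, so the star of $w$ in $G'$ is exactly the union of the non-affected faces in $\St^2(w_1)$ and $\St^2(w_3)$ with $w_1,w_3$ relabeled to $w$, and the stars of $w_2$ and $w_4$ simply lose the affected faces incident to them. Combining this with the sign-pairing, the net affected-face contribution to $\#\varepsilon_{\alpha,x}^{-1}(+1)-\#\varepsilon_{\alpha,x}^{-1}(-1)$ vanishes at $v$, at $u$, at $w_2$, and at $w_4$, while at the pair $\{w_1,w_3\}$ the two contributions are exact negatives of each other. This yields the key identity
\begin{align*}
\bigl(\#\varepsilon_{\alpha,w_1}^{-1}(+1)-\#\varepsilon_{\alpha,w_1}^{-1}(-1)\bigr)+\bigl(\#\varepsilon_{\alpha,w_3}^{-1}(+1)-\#\varepsilon_{\alpha,w_3}^{-1}(-1)\bigr)=\#\varepsilon_{\alpha',w}^{-1}(+1)-\#\varepsilon_{\alpha',w}^{-1}(-1),
\end{align*}
together with the fact that the balance at every other old vertex matches between $\alpha$ and $\alpha'$, and that the balance at every new vertex is zero. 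Hence if $\alpha'$ violates condition~(B) at some vertex $x^*$ of $G'$, then either $x^*\neq w$ and the same vertex violates~(B) for $\alpha$, or $x^*=w$ and the displayed identity forces at least one of $w_1,w_3$ to violate~(B) for $\alpha$; either way Theorem~\ref{thm:main} concludes that $\alpha$ is unbalanced.

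The hard part I anticipate is the bookkeeping for the twin-contraction case, where the six affected faces must be organized into the three correct opposite-sign pairs and their contributions tallied at each of the six local vertices $u,v,w_1,w_2,w_3,w_4$. The $4$-contraction case runs along the same lines with four faces and two pairs, so once the twin case is nailed down the $4$-contraction case should follow immediately.
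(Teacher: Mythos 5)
Your proof is correct and takes essentially the same route as the paper's: lift $\alpha'$ to $G$ by setting $\alpha(w_1)=\alpha(w_3)=\alpha'(w)$ and coloring the new degree-$4$ vertex (or vertices) greedily, observe that the affected faces pair off with opposite signatures because the two non-shared vertices $w_1,w_3$ of each pair share a color, and conclude that the witness of the (B)-violation either survives unchanged or transfers from $w$ to one of $w_1,w_3$. The only variation is that you show the balance at $w_2$ and $w_4$ is \emph{exactly} preserved under the lift via the pairing, whereas the paper instead invokes Fisk's mod-$3$ congruence (Lemma~\ref{lem:FiskII}) together with a triangle-inequality bound; both work, and your version is slightly cleaner since it does not need that lemma at those vertices.
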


\begin{proof}
Suppose first that $G'$ is obtained from $G$
by the $4$-contraction of a vertex $v$ at $\{w_1, w_3\}$,
where $w_1w_2w_3w_4w_1$ is the cycle that forms the link of $v$.
Let $v'$ be the vertex obtained by the identification of $w_1$ and $w_3$,
and let $f_1$, $f_2$, $f_3$, and $f_4$ be the faces of $G$
that are bounded by the cycles $w_1w_2vw_1, w_2w_3vw_2, w_3w_4vw_3$ and $w_4w_1vw_4$, respectively. See Figure~\ref{fig:four_contraction}.

By the assumption, $G'$ has an unbalanced $4$-coloring $\alpha'$.
We define $\alpha : V(G) \to \{0,1,2,3\}$ by
\begin{align*}
    \alpha(x) :=
    \begin{cases}
    \alpha'(x) & \text{if $x \in V(G) \setminus \{v,w_1,w_3\}$},\\
    \alpha'(v') & \text{if $x \in \{w_1,w_3\}$},\\
    c & \text{if $x = v$},
    \end{cases}
\end{align*}
where $c$ is some color in $\{0,1,2,3\} \setminus \{\alpha'(v'), \alpha'(w_2), \alpha'(w_4) \}$.
It is easy to see that $\alpha$ is a $4$-coloring of $G$.
We will show that 
$G$ has a vertex
that
violates
the equality~\eqref{eq:iff-condition} for $\alpha$.
Since $\alpha'$ is an unbalanced $4$-coloring of $G'$,
it follows from Theorem \ref{thm:main} that
there exists a vertex $x$ in $G'$ 
that
violates
the equality~\eqref{eq:iff-condition} for $\alpha'$.
If $x$ is contained in $V(G') \setminus \{v', w_2, w_4\}$,
then 
$\St_{G'}^2(x) = \St_{G}^2(x)$,
and hence 
$x$
violates
the equality~\eqref{eq:iff-condition} even for $\alpha$.
Next, consider the case $x \in \{w_2, w_4\}$.
By symmetry, suppose $x = w_2$ without loss of generality.
Note that
$
\left|
\#\varepsilon_{\alpha', w_2}^{-1}(+1)
-
\#\varepsilon_{\alpha', w_2}^{-1}(-1)
\right|
\geq 3$ by \Cref{lem:FiskII}
and that
$\St_{G'}^2(w_2) = \St_{G}^2(w_2)\setminus \{f_1, f_2\}$.
Thus,
\begin{eqnarray*}
\left|
\#\varepsilon_{\alpha, w_2}^{-1}(+1) -
\#\varepsilon_{\alpha, w_2}^{-1}(-1)
\right|
=
\left|
\#\varepsilon_{\alpha', w_2}^{-1}(+1) -
\#\varepsilon_{\alpha', w_2}^{-1}(+1) +
\varepsilon_{\alpha}(f_1) + \varepsilon_{\alpha}(f_2)
\right|
\geq
1.
\end{eqnarray*}
Therefore,
$x$
violates
the equality~\eqref{eq:iff-condition} even for $\alpha$.
For the case $x = v'$,
note that
$\St_{G'}^2(v') = (\St_{G}^2(w_1)\setminus \{f_1, f_4\}) \cup (\St_{G}^2(w_3)\setminus \{f_2, f_3\})$.
By the definition of $\alpha$, we have
$\varepsilon_\alpha(f_1) = - \varepsilon_\alpha(f_2)$
and $\varepsilon_\alpha(f_3) = - \varepsilon_\alpha(f_4)$.
If 
$
\#\varepsilon_{\alpha, w_1}^{-1}(+1)
=
\#\varepsilon_{\alpha, w_1}^{-1}(-1)$
and 
$
\#\varepsilon_{\alpha, w_3}^{-1}(+1)
=
\#\varepsilon_{\alpha, w_3}^{-1}(+1)$,
then
\begin{eqnarray*}
\lefteqn{
\#\varepsilon_{\alpha', v'}^{-1}(+1) -
\#\varepsilon_{\alpha', v'}^{-1}(-1)
}\\
&=&
\#\varepsilon_{\alpha, w_1}^{-1}(+1) + 
\#\varepsilon_{\alpha, w_3}^{-1}(+1) -
\#\varepsilon_{\alpha, w_1}^{-1}(-1) -
\#\varepsilon_{\alpha, w_3}^{-1}(-1)
-\varepsilon_{\alpha}(f_1) - \varepsilon_{\alpha}(f_2)
-\varepsilon_{\alpha}(f_3) - \varepsilon_{\alpha}(f_4)
\\
&=&
0,
\end{eqnarray*}
which contradicts the assumption that
$x=v'$ does not satisfy the equality~\eqref{eq:iff-condition} for $\alpha'$.
Therefore,
at least one of  $w_1$ and $w_3$ 
violates
the equality~\eqref{eq:iff-condition} for $\alpha$,
and $\alpha$ is an unbalanced $4$-coloring of $G$.

Suppose next that $G'$ is obtained from $G$
by the twin-contraction of $\{u,v\}$ at $\{w_1, w_3\}$,
where $w_1w_2w_3vw_1$ and $w_1uw_3w_4w_1$
are the cycles that form the links of $u$ and $v$, respectively.
Let $v'$ be the vertex obtained by the identification of $w_1$ and $w_3$,
and let $f_1, f_2, f_3, f_4, f_5, f_6$ be the faces of $G$
that are bounded by the cycles
$w_1w_2uw_1, w_2w_3uw_2, w_3vuw_3, w_3w_4vw_3, w_4w_1vw_4, w_1uvw_1$, respectively. See Figure~\ref{fig:twin_contraction}.
By the assumption, $G'$ has an unbalanced $4$-coloring $\alpha'$.
We define $\alpha : V(G) \to \{0,1,2,3\}$ by
\begin{align*}
    \alpha(x) :=
    \begin{cases}
    \alpha'(x) & \text{if $x \in V(G) \setminus \{v,w_1,w_3\}$},\\
    \alpha'(v') & \text{if $x \in \{w_1,w_3\}$},\\
    c & \text{if $x = u$},\\
    c' & \text{if $x = v$},
    \end{cases}
\end{align*}
where $c$ is some color in $\{0,1,2,3\} \setminus \{\alpha'(v'),\alpha'(w_2)\}$
and $c'$ is some color in $\{0,1,2,3\} \setminus \{\alpha'(v'), \alpha'(w_4), c\}$.
It is easy to see that $\alpha$ is a $4$-coloring of $G$.
Since $\alpha'$ is an unbalanced $4$-coloring of $G'$,
it follows from Theorem \ref{thm:main} that
there exists a vertex $x$ in $G'$ 
that
violates
the equality~\eqref{eq:iff-condition} for $\alpha'$.
If $x \neq v'$,
then the same argument as above implies that
$x$
violates
the equality~\eqref{eq:iff-condition} even for $\alpha$.
On the other hand,
suppose that $x = v'$.
Note that
$\St_{G'}^2(v') = (\St_{G}^2(w_1)\setminus \{f_1, f_5, f_6\}) \cup (\St_{G}^2(w_3)\setminus \{f_2, f_3, f_4\})$.
By the definition of $\alpha$, we have
$\varepsilon_\alpha(f_1) = - \varepsilon_\alpha(f_2)$,
$\varepsilon_\alpha(f_3) = - \varepsilon_\alpha(f_6)$,
and $\varepsilon_\alpha(f_4) = - \varepsilon_\alpha(f_5)$.
Thus,
if 
$
\#\varepsilon_{\alpha, w_1}^{-1}(+1)=
\#\varepsilon_{\alpha, w_1}^{-1}(-1)$
and 
$
\#\varepsilon_{\alpha, w_3}^{-1}(+1) =
\#\varepsilon_{\alpha, w_3}^{-1}(-1)$,
then
\begin{eqnarray*}
\#\varepsilon_{\alpha', v'}^{-1}(+1) - 
\#\varepsilon_{\alpha', v'}^{-1}(-1)
&=&
\#\varepsilon_{\alpha, w_1}^{-1}(+1) +
\#\varepsilon_{\alpha, w_3}^{-1}(-1) - 
\#\varepsilon_{\alpha, w_1}^{-1}(+1) +
\#\varepsilon_{\alpha, w_3}^{-1}(-1)
\\
&&
-\varepsilon_{\alpha}(f_1) - \varepsilon_{\alpha}(f_2)
-\varepsilon_{\alpha}(f_3) - \varepsilon_{\alpha}(f_4)
-\varepsilon_{\alpha}(f_5) - \varepsilon_{\alpha}(f_6)
\\
&=&
0,
\end{eqnarray*}
which contradicts the assumption that
$x=v'$
violates
the equality~\eqref{eq:iff-condition} for $\alpha'$.
This implies that 
at least one of $w_1$ and $w_3$ 
violates
the equality~\eqref{eq:iff-condition} for $\alpha$.
In both cases,
$\alpha$ is an unbalanced $4$-coloring of $G$,
and this completes the proof of Lemma \ref{lemma:splitting}.
\end{proof}

We are ready to prove Theorem \ref{thm:conn2},
which shows Theorem \ref{thm:conn} as mentioned before.
\begin{proof}[Proof of Theorem~\ref{thm:conn2}]
We first prove the if part.
Suppose that 
every $4$-connected piece of $G$ is isomorphic to the octahedral graph.
By Lemma \ref{lemma:small_tri}~(1),
every $4$-connected piece of $G$ admits no unbalanced $4$-coloring.
By recursively applying Lemma \ref{lemma:conn2},
we see that $G$ has no unbalanced $4$-coloring.
This completes the proof of the if part.

We next prove the only-if part.
Suppose that 
there is a $4$-connected piece $H$ of $G$ that is not isomorphic to the octahedral graph.
By Theorem \ref{thm:Matsu},
there exists a sequence $H_0, H_1, \dots , H_{\ell}$
from $H_0 := H$
such that
$H_{\ell}$ is the octahedral graph,
$H_i$ is a $4$-connected $3$-colorable triangulation for $0 \leq i \leq \ell$,
and
$H_i$ is obtained from $H_{i-1}$ by either a $4$-contraction or a twin-contraction
for $1 \leq i \leq \ell$.
Since $H$ is not isomorphic to the octahedral graph,
we have $\ell \geq 1$.
In particular, $H_{\ell -1}$ exists.
Since $H_{\ell -1}$ is $4$-connected and $3$-colorable,
it follows from Lemma \ref{lemma:small_tri}~(4) that
$H_{\ell -1}$ is obtained from the octahedral graph $H_{\ell}$ by a $4$-contraction.
By Lemma \ref{lemma:small_tri}~(2) and~(3),
$H_{\ell -1}$ is isomorphic to the double wheel of order $8$,
and admits an unbalanced $4$-coloring.
By recursively applying Lemma \ref{lemma:splitting}
to the $4$-connected $3$-colorable triangulations $H_{\ell -2}, \dots , H_{0}$,
the $4$-connected piece $H_{0} = H$ admits an unbalanced $4$-coloring.
Thus,
by applying Lemma \ref{lemma:conn2},
the triangulation $G$ also has an unbalanced $4$-coloring.
This completes the proof of the only if part.
\end{proof}

The criterion in Theorem \ref{thm:conn} can be used to obtain a polynomial-time, particularly, linear-time algorithm 
for \fourconn for a $3$-colorable triangulation of the $2$-sphere,
as follows.

\begin{corollary}
\label{thm:conn_algorithm}
\fourconn for a $3$-colorable triangulation of the $2$-sphere is solvable in $O(\# V)$ time.
\end{corollary}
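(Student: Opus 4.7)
By Theorem~\ref{thm:conn}, $\mathcal{R}_4(G)$ is connected if and only if every $4$-connected piece of $G$ is isomorphic to the octahedral graph, so the task reduces to computing the $4$-connected pieces of $G$ and checking each against the octahedron within $O(\#V)$ time.

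First, we enumerate all $3$-cycles of $G$. Since $G$ is planar, its arboricity is bounded by a constant, so a classical triangle-listing algorithm for planar graphs finds all $3$-cycles in $O(\#E) = O(\#V)$ time. The separating triangles are precisely the $3$-cycles that are not faces of the input triangulation; they can be filtered in the same running time once the face list is hashed. Next, a key observation is that in the dual graph of the triangulation, two faces lie in the same $4$-connected piece if and only if they can be joined by a dual path whose underlying edges of $G$ all avoid every separating triangle. Accordingly, we mark every edge of $G$ lying on at least one separating triangle, delete the corresponding dual edges, and run a single breadth-first search to extract the connected components; this assigns every face of $G$ to a unique piece in $O(\#V)$ total time.

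Finally, each piece is verified against the octahedron. Among triangulations of $S^2$, the octahedron is the unique $4$-connected one with $6$ vertices: by Euler's formula we have $\#F = 2\#V-4$ and $\#E = 3\#V-6$, so a $6$-vertex triangulation has $12$ edges, whence $4$-connectivity forces the graph to be $4$-regular, pinning down $K_{2,2,2}$. It therefore suffices to check, for each piece $P$, that the number of faces of $G$ contained in the region of $P$ plus the number of separating triangles bounding $P$ equals $8$, and this tally can be accumulated during the search. The main subtlety we anticipate is justifying the dual-graph cutting rule when a single edge of $G$ lies on several separating triangles; however, in such a case the two faces sharing this edge lie on opposite sides of any one of those triangles and hence in distinct pieces, so the cutting rule remains correct. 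Summing the running times across all steps yields the claimed $O(\#V)$ bound.
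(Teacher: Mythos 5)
Your proposal follows essentially the same approach as the paper's (rather terse) proof: reduce the task via Theorem~\ref{thm:conn} to checking that every $4$-connected piece is the octahedron, and enumerate separating triangles in linear time by the Chiba--Nishizeki triangle-listing algorithm for bounded-arboricity graphs. You go further than the paper by spelling out the dual-graph cut for extracting the pieces and the Euler-formula argument pinning down the octahedron as the unique $4$-connected $6$-vertex triangulation of $S^2$; both of these steps are sound. The one spot left vague is the assertion that ``the number of separating triangles bounding $P$ \dots\ can be accumulated during the search.'' When two bounding separating triangles of a piece $P$ share an edge $e$, the edge $e$ need not be a boundary (cut) edge of $P$: the two faces of $G$ incident to $e$ can both lie outside $P$'s region. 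Consequently the number of boundary edges of $P$ need not be three times the number of bounding separating triangles, so a naive per-piece tally can miscount and some additional bookkeeping (e.g.\ using the laminar/tree structure of separating triangles) would be needed to make this step precise. A simple global check sidesteps the issue entirely: every $3$-colorable triangulation of $S^2$ with no separating triangle has at least $8$ faces (the octahedron being the smallest such triangulation); if $k$ is the number of separating triangles then there are $k+1$ pieces whose total face count is $\#F + 2k$; hence every piece is the octahedron if and only if $\#F = 6k + 8$, which is verified in $O(\#V)$ time directly from the triangle enumeration without ever extracting the pieces.
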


\begin{proof}
By Theorem \ref{thm:conn},
it suffices to check
whether there exists a $4$-connected piece of $G$
that is not isomorphic to 
the octahedral graph.
This can be obtained
by enumerating all separating triangles
in linear time \cite{DBLP:journals/siamcomp/ChibaN85}.
\end{proof}

In addition to Corollary~\ref{thm:conn_algorithm},
the proof of Theorem~\ref{thm:conn} implies that 
if the answer to \fourconn is NO,
then we can find in polynomial time an unbalanced $4$-coloring $\alpha$ 
in a given $3$-colorable triangulation $G$.
Since $\alpha$ cannot belong to the $3$-coloring component of $\mathcal{R}_4(G)$,
this would be a certificate for being a NO-instance.
We leave the detail to the readers.

\section{PSPACE-completeness}\label{sec:pspace}

As in Introduction, we show the following result in this section.

\begin{theorem}\label{thm:pspace}
For $k\geq 4$, the problem \kplusrecolor for $(k-1)$-colorable triangulation of the $(k-2)$-sphere is PSPACE-complete.
\end{theorem}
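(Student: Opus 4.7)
The plan is to prove the theorem in two parts, membership and hardness.

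\emph{Membership in PSPACE.} This is standard for reconfiguration problems. A nondeterministic algorithm guesses a single-change at each step, storing only the current $(k+1)$-coloring and a step counter. Since the diameter of $\R_{k+1}(G)$ is at most exponential in $\#V(G)$, the counter has polynomially many bits, so the procedure runs in polynomial space. By Savitch's theorem, $\mathrm{NPSPACE}=\mathrm{PSPACE}$.

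\emph{Hardness, base case $k=4$.} I plan to reduce from $5$-Recoloring on planar graphs, which is PSPACE-complete by Bonsma and Cereceda~\cite{bonsma}. Given a planar instance $(H,\alpha,\beta)$, the idea is to embed $H$ in $S^2$ and extend it to a $3$-colorable triangulation $T$ of $S^2$ while preserving the reconfiguration equivalence. Concretely, I would first subdivide the edges of $H$ using paths of lengths chosen to balance vertex parities, and then fill each face of the resulting embedding with a triangulated disk, a \emph{filler gadget}, with two properties: (i) the filler's interior vertices are unrecolorable throughout the reconfiguration because their neighborhoods already use all five colors in every $(k+1)$-coloring extending the initial one, and (ii) the fillers collectively make the final triangulation Eulerian, which is equivalent to $3$-colorability. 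The construction is designed to produce a bijection between reconfiguration sequences in $(H,\alpha,\beta)$ and those in $(T,\alpha',\beta')$ for appropriate $\alpha',\beta'$.

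\emph{Hardness, general $k\geq 5$.} I propose to extend the $k=4$ construction to higher dimensions by embedding a two-dimensional ``working region'' into a background $(k-1)$-colorable triangulation of $S^{k-2}$ built from iterated joins of small triangulated spheres, in the spirit of \Cref{ex:highdim}. The working region hosts the base-case gadgets, while the complementary region is filled with higher-dimensional analogues of filler gadgets whose interior vertices are frozen in the same sense as above. The generalized balanced condition of \Cref{thm:highdim_case} dictates the parity accounting needed to maintain $(k-1)$-colorability globally.

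\emph{Main obstacle.} The principal challenge is the design of the filler gadgets. They must simultaneously triangulate a prescribed region, preserve $(k-1)$-colorability (a global parity condition on $(k-2)$-simplices), and render their interior vertices permanently unrecolorable in every $(k+1)$-coloring reachable from the initial one. These constraints interact nontrivially because $(k-1)$-colorability is not determined locally. In higher dimensions the design is particularly delicate, since the geometric intuition from $S^2$ does not generalize directly, and one must rely on the algebraic characterization provided by \Cref{thm:highdim_case} to control the parity.
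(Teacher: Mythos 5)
Your overall architecture---triangulate a planar graph, freeze the added vertices via gadgets, and push to higher dimensions by a join construction---matches the spirit of the paper's proof. But there is a gap in the choice of source problem that, as stated, breaks the reduction.

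The paper does \emph{not} reduce from \fiverecolor on arbitrary planar graphs. It reduces from \Lrecolor on the specifically structured graphs $H\in\mathcal{H}'$ of Bonsma--Cereceda, built out of forbidding paths with carefully chosen size-$2$ and size-$3$ lists; Lemma~\ref{forbidding} is engineered so that the forbidding paths have even length and their endpoint lists and the special color $c$ interact exactly as required. This structure is essential for the filler step: when a new vertex $v_f$ is inserted into a face $f$ of $H$ and connected to every vertex on $\partial f$, the graph acquires the extra constraint that those vertices avoid $\alpha'(v_f)$, which changes the reconfiguration graph. The paper turns this side effect into a feature: by combining the list structure with a copy of the frozen gadget $J$ (conditions (a)--(d) of a restricted $5$-coloring, in particular (d)) the colors forced on the new vertices around each $v\in V(H)$ are exactly $\{0,\dots,4\}\setminus L_H(v)$, so the triangulated instance simulates the list instance. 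Your proposal, starting from \fiverecolor on plain planar graphs with no lists, has no analogous mechanism: the vertices $v_f$ and the filler interiors would impose unintended constraints on the $H$-vertices, and conversely there is no way to guarantee they remain frozen once $H$-vertices change color, so reconfiguration sequences need not correspond. Making filler gadgets that are simultaneously frozen in all reachable colorings, parity-correcting, and transparent to the recolorings of $H$ is precisely the hard part, and without the list structure it is unclear that it can be done.

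Two secondary remarks. First, the $2$-connectivity step (Lemma~\ref{2-conn}) is needed before triangulating to avoid multi-edges; your outline omits this. Second, for $k\geq 5$ the paper simply takes the \emph{suspension} of the $(k-3)$-sphere triangulation (join with two points, colored with the new color $k$, both automatically frozen), which is a one-line induction step; your ``iterated joins with a two-dimensional working region inside a higher-dimensional background'' is substantially more machinery than is needed and does not obviously yield a single-change-preserving bijection. The membership-in-PSPACE argument you give is standard and correct, although the paper does not bother to spell it out.
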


When restricted to the case $k=4$, \Cref{thm:pspace} implies that \textsc{$5$-Recoloring} is PSPACE-complete even for planar $3$-colorable triangulations (i.e., even triangulations).

A path consisting of $\ell$ vertices $v_1,v_2, \dots , v_{\ell}$
and ${\ell}-1$ edges $\{v_1, v_{2}\}, \{v_2, v_{3}\}, \dots , \{v_{\ell-1}, v_{\ell}\}$
is denoted by the sequence $v_1v_2 \cdots v_{\ell}$ of vertices.

\subsection{\Lrecolor}\label{pre}

In order to prove Theorem~\ref{thm:pspace},
we introduce a new recoloring problem.
For a list coloring, we associate a \emph{list assignment} $L = (L(v))_{v \in V(G)}$ with a graph $G$ such that each $v\in V(G)$ is assigned a list $L(v)$ of colors.
For a list assignment $L$ of a graph $G$,
a map $\alpha$ on $V(G)$ is an \emph{$L$-coloring} if $\alpha(v) \in L(v)$ for every $v \in V(G)$ and $\alpha(u) \neq \alpha(v)$ for every $\{u,v\} \in E(G)$.
For a graph $G$ and a list assignment $L$ of $G$, the $L$-\emph{coloring reconfiguration graph}, denoted by $\mathcal{R}(G,L)$,
is defined as follows:
Its vertex set consists of all $L$-colorings of $G$
and there is an edge between two $L$-colorings $\alpha$ and $\beta$ of $G$
if and only if $\beta$ is obtained from $\alpha$ by recoloring only a single vertex in $G$.
We consider the following reconfiguration problem named \Lrecolor.
\begin{description}
        \item[\underline{\Lrecolor}]
\item[Input:]
A graph $G$, a list assignment $L$ of $G$, 
and two $L$-colorings $\alpha$ and $\beta$ of $G$. 
\item[Output:]
YES if $\alpha$ and $\beta$ are connected in $\mathcal{R}(G,L)$, and NO otherwise.
\end{description}

Let $P$ be a $(u,v)$-path with a list assignment $L$.
An $L$-coloring $\alpha$ of $P$ is a $(c,d)$\emph{-coloring} if $\alpha(u)=c$ and $\alpha(v)=d$.
For $a \in L(u)$ and $b \in L(v)$, we call a pair $(P,L)$ an $(a,b)$\emph{-forbidding path} if the following conditions are satisfied.

\begin{itemize}
\item[(I)] A $(c,d)$-coloring exists if and only if $(c,d) \neq (a,b)$,
\item[(II)] if both a $(c,d)$-coloring and a $(c^{\prime},d)$-coloring exist, then for any $(c,d)$-coloring,
a sequence of recolorings exists that ends with some $(c^{\prime},d)$-coloring, without ever recoloring $v$, and only recoloring $u$ at the last step, and
\item[(III)] if both a $(c,d)$-coloring and a $(c,d^{\prime})$-coloring exist, then for any $(c,d)$-coloring,
a sequence of recolorings exists that ends with some $(c,d^{\prime})$-coloring, without ever recoloring $u$, and only recoloring $v$ at the last step.
\end{itemize}
See \figurename~\ref{fig:forbidding_path}.
In the rest of the paper, a list assignment $L$ of a graph $G$ satisfies $L(v) \subseteq \{0,1,2,3\}$ for $v \in V(G)$.
Bonsma and Cereceda~\cite[Lemma 7]{bonsma} proved that an $(a,b)$-forbidding $(u,v)$-path exists
if $L(u) \neq \{0,1,2,3\}$ and $L(v) \neq \{0,1,2,3\}$.
We need an $(a,b)$-forbidding $(u,v)$-path satisfying additional conditions.

\begin{figure}
    \centering
    \includegraphics{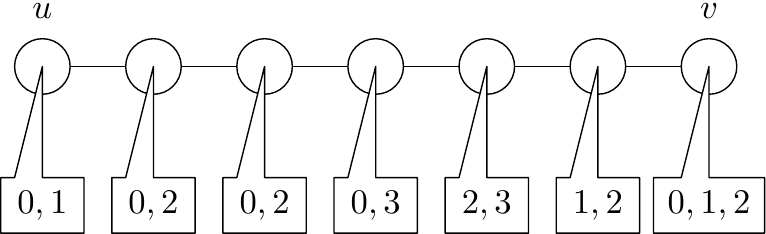}
    \caption{An example of a $(0,1)$-forbidding $(u,v)$-path. Each balloon shows the list of colors associated to each vertex. This example is constructed by the method in the proof of \Cref{forbidding} with $c=3$.}
    \label{fig:forbidding_path}
\end{figure}

\begin{lemma}\label{forbidding}
Let $L_u \subsetneq \{0,1,2,3\}$ and $L_v \subsetneq \{0,1,2,3\}$ with $L_u \cup L_v \neq \{0,1,2,3\}$.
For any $a \in L_u$, $b \in L_v$, and $c \notin L_u\cup L_v$,
there exists an $(a,b)$-forbidding $(u,v)$-path $(P,L)$ satisfying that
\begin{itemize}
\item[{\rm (i)}] $L(u)=L_u$ and $L(v)=L_v$,
\item[{\rm (ii)}] $L(w) \subseteq \{0,1,2,3\}$ and $\#L(w)=2$ for each $w \in V(P)\setminus \{u,v\}$,
\item[{\rm (iii)}] $P$ has even length,
\item[{\rm (iv)}] $\bigcup_{w \in V(P)}L(w)=\{0,1,2,3\}$, and
\item[{\rm (v)}] $L(w)$ contains $c$ for each $w \in N_P(u) \cup N_P(v)$.
\end{itemize}
\end{lemma}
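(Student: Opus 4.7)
The plan is to give an explicit construction of $(P, L)$ based on the ``forced propagation'' forbidding paths of Bonsma and Cereceda~\cite{bonsma}, adapted so that the extra conditions (iii)--(v) are met as well. I will take $P$ to be the path $u\,x_1\,x_2\,x_3\,x_4\,x_5\,v$ of length $6$ and define $L$ by encoding in it a color walk $a = c_0, c_1, c_2, c_3, c_4, c_5 = b$ in $\{0,1,2,3\}$ (with consecutive entries distinct) that becomes rigid as soon as $u$ is fixed to $a$.

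To carry this out, first choose two distinct colors $c_2, c_3 \in \{0,1,2,3\} \setminus \{c\}$ so that $L_u \cup L_v \cup \{c, c_2, c_3\} = \{0,1,2,3\}$. This is always possible because $L_u \cup L_v \supseteq \{a, b\}$, leaving at most two colors of $\{0,1,2,3\} \setminus \{c\}$ to be covered by $\{c_2, c_3\}$. Put $c_1 := c_4 := c$, and set $L(u) := L_u$, $L(v) := L_v$, and $L(x_i) := \{c_{i-1}, c_i\}$ for $i = 1, \ldots, 5$; explicitly, $L(x_1) = \{a, c\}$, $L(x_2) = \{c, c_2\}$, $L(x_3) = \{c_2, c_3\}$, $L(x_4) = \{c_3, c\}$, and $L(x_5) = \{c, b\}$. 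Conditions (i)--(v) are then immediate: each internal list has size $2$ because consecutive walk entries differ, the path has even length $6$, the union of all lists is $\{0,1,2,3\}$ by the choice of $c_2, c_3$, and $c$ lies in both $L(x_1)$ and $L(x_5)$. Moreover, $6$ is the minimum even length compatible with (v), because $c \in L(x_1)$ and $c \in L(x_m)$ (where $m$ is the number of internal vertices) force $c_1 = c_{m-1} = c$, and combined with $c_1 \neq c_2$ and $c \notin \{a, b\}$ this rules out the smaller candidates $m \in \{1, 3\}$.

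It remains to verify the forbidding path axioms. For (I), the forbidden case $(a,b)$ follows by forced propagation: $u = a$ forces $x_1 = c$, and inductively $x_i = c_i$ for all $i$, so $x_5 = b$ and $v \neq b$. For every other admissible pair $(c_u, c_v)$ with $c_u \in L_u$, $c_v \in L_v$, an explicit coloring is produced by propagating from whichever endpoint breaks the forced chain, using the fact that $c_u \neq a$ (respectively $c_v \neq b$) gives at least one free choice at $x_1$ (respectively $x_5$). Axioms (II) and (III) are verified by showing that, with either endpoint fixed, the list-coloring reconfiguration graph of the interior of $P$ is connected; then moving between allowed $(c_u, c_v)$- and $(c_u', c_v)$-colorings reduces to internal recolorings followed by a single final endpoint flip. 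The principal obstacle is exactly this last step: the forced chain admits no slack when $u = a$ (or symmetrically when $v = b$), so the transitions at the ``boundary'' of the set of allowed endpoint pairs require individual handling; however, because $P$ has only five internal vertices each with a list of size two, the case analysis stays bounded and mechanical.
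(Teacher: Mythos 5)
Your approach matches the paper's: a path of length $6$ whose internal size-two lists encode a forced color chain $a, c, c_2, c_3, c, b$. The construction and the verification of (I) are sound, but the verification of (II) and (III) has a genuine gap. You defer it to the assertion that ``with either endpoint fixed, the list-coloring reconfiguration graph of the interior of $P$ is connected,'' with the remaining details left to a ``bounded and mechanical'' case analysis. That connectivity claim is not merely unproven --- it is \emph{false} for some instances of your construction, so no amount of case analysis would close it.

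Your recipe permits $c_2 = a$: take for instance $L_u = L_v = \{0,1\}$, $a = 0$, $b = 1$, $c = 2$, $c_2 = 0$, $c_3 = 3$, which satisfy all your stated conditions. Then $L(x_1) = L(x_2) = \{a, c\}$, and with $u$ held at any $c_u \in L_u \setminus \{a\}$ and $v$ at any $c_v \in L_v \setminus \{b\}$, the interior coloring $(x_1, \dots, x_5) = (c, a, c_3, c, b)$ is \emph{frozen}: $x_1$ and $x_2$ lock each other (adjacent vertices with identical size-two lists), and each of $x_3, x_4, x_5$ is blocked by a neighbor. Moreover this is the \emph{unique} interior coloring with $x_1 = c$, which is exactly the state one must reach before the final recoloring of $u$ to $a$. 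Hence from any other $(c_u, c_v)$-coloring one can never perform the last flip, and (II) fails; symmetrically $c_3 = b$ breaks (III). The fix is to impose $c_2 \neq a$ and $c_3 \neq b$ in addition to your coverage condition (this is always achievable), which makes every pair of consecutive internal lists distinct; after that, (II) and (III) follow by an explicit ``find the first vertex along $x_1,\dots,x_5$ whose other color is not occupied by its far neighbor, flip it, and propagate back toward $x_1$'' argument --- not by a blanket connectivity claim, which is the wrong formulation anyway since one only needs to reach the single target state. It is worth pointing out that the paper's own list assignment in the case $a \neq b$ sets $L(v_1) = L(v_2) = \{a, c\}$, i.e.\ chooses $c_2 = a$ in your notation, and thus produces the very same frozen coloring; so your parametrized construction with the constraints $c_2 \neq a$, $c_3 \neq b$ added is not just one valid choice among several but the repair that is actually needed.
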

\begin{proof}
Let $c \in \{0,1,2,3\}\setminus (L(u) \cup L(v))$. Let $P := uv_1v_2v_3v_4v_5v$ be a path with length six.
If $a \neq b$, then let $L$ be a list assignment with
\[ (L(u),L(v_1),L(v_2),L(v_3),L(v_4),L(v_5),L(v)) :=(L_u,\{a,c\},\{a,c\},\{a,d\},\{c,d\},\{b,c\},L_v), \]
where $d$ is the color in $\{0,1,2,3\}\setminus \{a,b,c\}$.
If $a=b$, then let $L$ be a list assignment with
\[ (L(u),L(v_1),L(v_2),L(v_3),L(v_4),L(v_5),L(v)) :=(L_u,\{a,c\},\{c,d\},\{d,e\},\{c,e\},\{a,c\},L_v), \]
where $d$ and $e$ are the different colors in $\{0,1,2,3\}\setminus \{a,c\}$.

We prove that $(P,L)$ satisfies the condition (I) of an $(a,b)$-forbidding path.
Let $\alpha$ be an $L$-coloring with $\alpha(u)=a$.
If $a \neq b$, then $\alpha(v_1)=c$, $\alpha(v_2)=a$, $\alpha(v_3)=d$, $\alpha(v_4)=c$, and $\alpha(v_5)=b$;
otherwise, $\alpha(v_1)=c$, $\alpha(v_2)=d$, $\alpha(v_3)=e$, $\alpha(v_4)=c$, and $\alpha(v_5)=a$.
In either case, we have $\alpha(v) \neq b$.
We show that for any pair $(x,y) \neq (a,b)$ with $x \in L(u)$ and $y \in L(v)$, there is an $(x,y)$-coloring.
Let $(x,y) \neq (a,b)$ be a pair with $x \in L(u)$ and $y \in L(v)$.
If $x=a$, then the coloring $\alpha$ above can be extended to an $(x,y)$-coloring by letting $\alpha(v) = y$.
By a similar argument, we can conclude the case $y=b$.
If $x\neq a$ and $y \neq b$,
then there is an $(x,y)$-coloring
such that $v_1$ and $v_5$ have the color $a$ and $b$, respectively.

We show that $(P,L)$ satisfies the condition (II) of an $(a,b)$-forbidding path.
We may assume that both an $(x,y)$-coloring and an $(x^{\prime},y)$-coloring exist.
Let $\beta$ be an $(x,y)$-coloring.
If $x^{\prime} \neq a$, then by $c \notin L(u)\cup L(v)$,
we can recolor $\beta(u)$ to $x^{\prime}$ in the case either $\beta(v_1) = a$ or $\beta(v_1) = c$.
Hence, we may assume that $x^{\prime} = a$. Then, we have $y \neq b$.
If $\beta(v_1) \neq a$, then we can recolor $\beta(u)$ to $a$. Hence, we suppose that $\beta(v_1)=a$.
Let $1\leq i\leq 5$ be the smallest integer that satisfies $L(v_i)\setminus \{\beta(v_i)\} \neq \{\beta(v_{i+1})\}$, where $v_6 :=v$.
Since $y\neq b$, such an integer $i$ exists.
Then, we can recolor $\beta(v_j)$ to the color in $L(v_j)\setminus \{\beta(v_j)\}$
for $1 \leq j \leq i$ in the order from $v_i$ to $v_1$.
Hence, we can recolor $\beta(u)$ to $a$ and $(P,L)$ satisfies the condition (II) of an $(a,b)$-forbidding path.
By a similar argument, we can show that $(P,L)$ satisfies the condition (III) of an $(a,b)$-forbidding path.

It is easy to see that $(P,L)$ satisfies the conditions (i)--(v) in Lemma~\ref{forbidding}.
\end{proof}

Bonsma and Cereceda~\cite{bonsma} proved that \Lrecolor is PSPACE-complete
for particularly restricted graphs and list assignments.
To explain the restriction,
we first define the set $\mathcal{X}$ of graphs $X$ satisfying that
\begin{itemize}
\item $X$ contains mutually disjoint 
complete graphs $T_1,T_2,\ldots,T_{\ell}$ and $S_1,S_2,\ldots,S_m$ as a subgraph,
where $\# V(T_i) = 3$ for $1\leq i\leq \ell$
and $\# V(S_j) = 2$ for $1 \leq j \leq m$,
\item $V(X)=\bigcup_{1\leq i\leq \ell}V(T_i) \cup \bigcup_{1\leq j\leq m}V(S_j)$,
\item every vertex in $X$ is of degree two or three and for any $1\leq i\leq \ell$, every vertex in $T_i$ is of degree three in $X$, and
\item $X$ has a planar embedding such that each $T_i$ bounds a face.
\end{itemize}
See \figurename~\ref{fig:reduction2} (Left).

\begin{figure}
    \centering
    \includegraphics[width=0.95\textwidth]{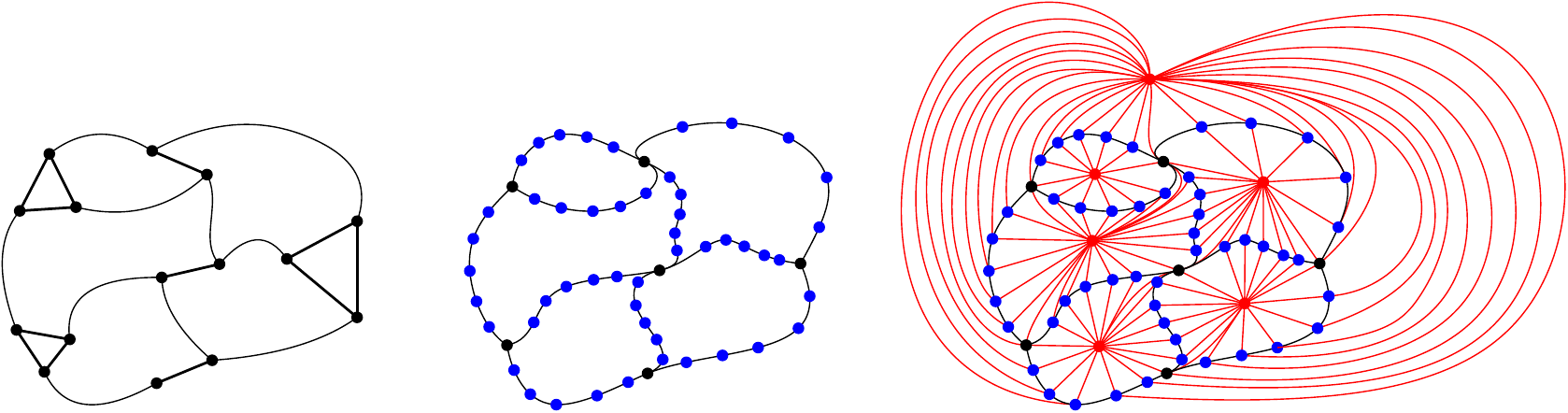}
    \caption{Reduction for the proof of \Cref{thm:pspace}. (Left) A graph $X$ by Bonsma and Cereceda~\cite{bonsma}. (Middle) The graph $H(X)$ obtained from $G$. Black vertices correspond to the contracted triangles and edges. Blue vertices come from forbidding paths. (Right) The graph $G^{\prime}$ obtained from $H(X)$. Red vertices are inserted to the faces of $H(X)$.}
    \label{fig:reduction2}
\end{figure}

Let $X$ be a graph in $\mathcal{X}$.
We give labels $t^0_i,t^1_i,t^2_i$ to the vertices of $T_i$ for $1\leq i\leq \ell$ and labels $s^0_j,s^1_j$ to the vertices in $S_j$ for $1\leq j\leq m$.
Let $X^{\prime}$ be the graph obtained from $X$ by contracting $T_i$ into a vertex $t_i$ for $1\leq i\leq \ell$ and contracting $S_j$ into a vertex $s_j$ for $1\leq j\leq m$.
Let $H(X)$ be the graph obtained from $X^{\prime}$ by replacing
each edge of $X^{\prime}$ with the form $\{t_i, s_j\}$, $\{t_i, t_{i^{\prime}}\}$, or $\{s_j,s_{j^{\prime}}\}$
by an $(a,b)$-forbidding path satisfying the conditions of Lemma~\ref{forbidding}
letting $a$ and $b$ be the integers satisfying $\{t_i^a, s_j^b\},\{t_i^a, t_{i^{\prime}}^b\},\{s_j^a,s_{j^{\prime}}^b\} \in E(X)$,
respectively.
See \figurename~\ref{fig:reduction2} (Middle).
For a vertex $v \in V(X^{\prime})$, the vertices adjacent to $v$ in $X^{\prime}$ are called the \emph{pseudo-neighbors} of $v$ in $H(X)$.
We denote by $\mathcal{H}$ the set of graphs $H(X)$ for $X \in \mathcal{X}$. 
Let $L_H$ be the list assignment of $H$ with $L_H(t_i)=\{0,1,2\}$ for $1\leq i\leq \ell$ and $L_H(s_j)=\{0,1\}$ for $1\leq j\leq m$.

\begin{theorem}[\cite{bonsma}]\label{lrecolor}
\Lrecolor is PSPACE-complete for $H \in \mathcal{H}$ and the list assignment $L_H$.
\end{theorem}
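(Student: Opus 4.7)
The plan is to prove Theorem \ref{lrecolor} in two steps: membership of \Lrecolor in PSPACE and a polynomial-time reduction from a known PSPACE-complete reconfiguration problem to the restricted class of instances $(H,L_H)$ with $H\in\mathcal{H}$. For membership, I would observe that an $L$-coloring is encoded in polynomial space, so a nondeterministic algorithm that repeatedly guesses a next single-vertex recoloring, verifies validity with respect to $L$, and halts when $\beta$ is reached runs in NPSPACE; Savitch's theorem then yields PSPACE. This part is essentially identical to the argument used for \krecolor in \cite{CHJ}.

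For hardness, I would reduce from planar Nondeterministic Constraint Logic (NCL) reconfiguration, which is PSPACE-complete even on planar AND/OR graphs of maximum degree three. Given an NCL instance $(D,c_0,c_1)$, I would build $X\in\mathcal{X}$ by encoding each OR vertex of $D$ as a triangle $T_i$ — the three distinct $\{0,1,2\}$-assignments at the contracted vertex $t_i$ represent the three permissible states of an OR gate — and each AND vertex as a pair $S_j$, whose two $\{0,1\}$-assignments at $s_j$ represent the two permissible AND states. Each NCL edge becomes an edge of $X$ joining the appropriate pair of vertices, labelled so that its endpoint labels $(t_i^a, s_j^b)$, $(t_i^a, t_{i'}^b)$, or $(s_j^a, s_{j'}^b)$ identify exactly the local joint state that would violate the NCL orientation constraint. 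Planarity and the degree-$3$ bound on $D$ ensure that $X$ admits a planar embedding in which every $T_i$ bounds a face, so $X\in\mathcal{X}$. Passing to $H(X)\in\mathcal{H}$ via contraction and Lemma~\ref{forbidding}, each edge of $X^{\prime}$ is replaced by an $(a,b)$-forbidding path whose forbidden pair encodes the corresponding NCL violation; the initial and target $L$-colorings are obtained by extending the encodings of $c_0$ and $c_1$ along these paths using condition (I).

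Correctness rests on the three properties of forbidding paths. Condition (I) guarantees that valid $L$-colorings of $H(X)$ correspond bijectively to legal NCL configurations of $D$, since no encoded NCL edge may be in its forbidden joint state. Conditions (II) and (III) let us lift any legal NCL flip of a gate into a sequence of single recolorings in $(H(X),L_H)$: first modify the intermediate vertices of every incident forbidding path so that the endpoint at the gate can change (without ever touching the neighboring gate vertex), then recolor $t_i$ or $s_j$, then clean up. Conversely, any reconfiguration in $(H(X),L_H)$ projects to a reconfiguration of the corresponding NCL configurations, because altering a pseudo-neighbor pair $(t_i,s_j)$ requires a legal flip. Hence reachability in $\mathcal{R}(H(X),L_H)$ is equivalent to NCL-reachability in $D$, yielding PSPACE-hardness.

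I expect the main obstacle to be the gadget calibration: choosing the forbidden pairs $(a,b)$ consistently across all edges of $X^{\prime}$ so that they simultaneously encode exactly the AND/OR orientation violations, and verifying that the flexibility provided by conditions (II) and (III) of Lemma~\ref{forbidding} really suffices to lift every single legal NCL flip to a valid sequence of single recolorings in $H(X)$ without ever forcing a recoloring of a neighboring gate vertex $t_{i'}$ or $s_{j'}$. Handling edge cases (parallel NCL edges between two gates, and the positioning of the distinguished color $c\notin L(u)\cup L(v)$ at the ends of a forbidding path to ensure free propagation) is the most delicate bookkeeping in the reduction.
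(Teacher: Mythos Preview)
This theorem is not proved in the present paper; it is quoted verbatim from Bonsma and Cereceda~\cite{bonsma} and used as a black box to derive Theorem~\ref{thm:2-conn}. There is therefore no proof in the paper to compare your proposal against.

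That said, your plan is broadly in the spirit of the original argument in~\cite{bonsma}: the triangles $T_i$ and edges $S_j$ of $X\in\mathcal{X}$ do arise from encoding gate-like vertices of a source reconfiguration problem, and the forbidding paths enforce pairwise incompatibilities along the edges of $X'$. However, what you have written is a plan rather than a proof, and the part you yourself flag as the main obstacle---the gadget calibration---is precisely where the content lies. In particular, your claim that ``the three distinct $\{0,1,2\}$-assignments at $t_i$ represent the three permissible states of an OR gate'' does not match standard NCL semantics (an OR vertex with three incident edges has seven legal orientations, not three), so either you intend a restricted variant of NCL or a nontrivial quotient of its state space, and neither is spelled out. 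Likewise, the bijection you assert between $L_H$-colorings of $H(X)$ and legal NCL configurations requires care: condition~(I) of a forbidding path only rules out one ordered pair $(a,b)$ per edge, so encoding the full NCL edge constraint for weight-$1$ and weight-$2$ edges simultaneously is not automatic. If you want a self-contained proof, you should either consult~\cite{bonsma} directly or replace NCL by the intermediate problem actually used there, whose state space aligns exactly with the $\{0,1,2\}$/$\{0,1\}$ lists.
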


We prove the following lemma.

\begin{lemma}\label{2-conn}
Let $(H, L_H, \alpha,\beta)$ be an instance of \Lrecolor with $H \in \mathcal{H}$.
Then there is an instance $(H^{\prime},L,\alpha^{\prime},\beta^{\prime})$ of \Lrecolor
such that $H^{\prime}$ is a $2$-connected planar graph and $(H, L_H, \alpha,\beta)$ is a YES-instance if and only if $(H^{\prime},L,\alpha^{\prime},\beta^{\prime})$ is a YES-instance.
\end{lemma}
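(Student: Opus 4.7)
Since $H \in \mathcal{H}$ inherits a planar embedding from $X \in \mathcal{X}$, I will fix such an embedding and construct $H'$ by inserting, in each face $F$ of $H$, a new \emph{face vertex} $v_F$ joined to every vertex on the boundary of $F$; the resulting graph $H'$ is planar by construction. To see that $H'$ is $2$-connected, observe that if $u$ is a cut vertex of $H$, then for each face $F$ of $H$ incident to $u$ the face vertex $v_F$ connects the remaining boundary vertices of $F$ together without passing through $u$, so $u$ is not a cut vertex of $H'$; and since $H$ is connected, no face vertex is itself a cut vertex of $H'$. A short verification will complete the $2$-connectedness argument.

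Next I will extend $L_H$ to a list assignment $L$ of $H'$ by specifying the lists $L(v_F)$ of the face vertices. The essential requirement is that, in every $L_H$-colouring of $H$ reachable from $\alpha$ or $\beta$, each $v_F$ has at least one colour in $L(v_F)$ that is not used by any of its neighbours; moreover, when an original vertex $w$ on the boundary of $F$ must be recoloured to a colour $c$, the face vertex $v_F$ should be recolourable (if necessary) to free $c$. A natural design is to include in $L(v_F)$ the free colour $c \notin L_u \cup L_v$ provided by the forbidding paths constructed in Lemma~\ref{forbidding}(v), thereby leveraging the slack already engineered into $H$. I will define $\alpha'$ and $\beta'$ as extensions of $\alpha$ and $\beta$ that assign some valid colour from $L(v_F)$ to each face vertex.

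For the ``only-if'' direction of the equivalence, I will lift any reconfiguration sequence $\alpha \to \beta$ in $\mathcal{R}(H, L_H)$ to one in $\mathcal{R}(H', L)$ by inserting, before each recolouring of an original vertex, any auxiliary recolourings of face vertices needed to preserve validity, and by adjusting the face vertices at the end to match $\beta'$. For the ``if'' direction, I will project a reconfiguration sequence in $\mathcal{R}(H', L)$ to $V(H)$: the projection is a sequence of $L_H$-colourings of $H$, and after collapsing consecutive repeats (which correspond to steps that only recolour face vertices) it forms a valid reconfiguration sequence from $\alpha$ to $\beta$ in $\mathcal{R}(H, L_H)$.

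The main obstacle will be the design of the lists $L(v_F)$. In principle the union of lists along the boundary of $F$ could cover all of $\{0,1,2,3\}$, in which case no single colour for $v_F$ avoids every neighbour's list simultaneously; the face vertex will therefore sometimes have to be recoloured, and I must check that such moves introduce no spurious obstructions to reconfiguration. The free colour guaranteed by Lemma~\ref{forbidding}(v), together with the restricted lists on pivot vertices ($L_H(t_i) = \{0,1,2\}$ and $L_H(s_j) = \{0,1\}$), should provide the necessary slack; if a single face vertex per face turns out not to be enough, I will insert several auxiliary vertices inside each face to gain further freedom, while preserving planarity and $2$-connectedness.
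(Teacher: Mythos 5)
Your overall strategy—add auxiliary vertices to make $H$ $2$-connected, give them lists that never obstruct the original dynamics, then show the projection/lifting of reconfiguration sequences preserves the answer—is the right shape of argument. But the specific construction you propose (a face vertex $v_F$ adjacent to \emph{every} boundary vertex of every face $F$) runs into the very obstacle you flag, and that obstacle is not incidental: it is fatal to this variant and you do not resolve it. By \Cref{forbidding}(iv), every forbidding path $P$ already satisfies $\bigcup_{w\in V(P)}L(w)=\{0,1,2,3\}$, so any face whose boundary contains a whole forbidding path can realize all four colours simultaneously on that boundary. Since the statement of the lemma (together with the convention stated just before \Cref{forbidding}) requires $L(v_F)\subseteq\{0,1,2,3\}$, there is no list you can give $v_F$ that is guaranteed a free colour; worse, a $v_F$ that is sometimes blocked can forbid exactly the recolouring the $H$-side sequence needs, so the ``lift'' direction breaks. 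Adding more auxiliary vertices inside the face does not help, because they inherit the same crowded neighbourhood or force you to further subdivide in ways you do not analyze.

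The paper avoids this entirely by being far more parsimonious. Instead of stellating every face, it iteratively picks a cut vertex $v\in V(X')$ and joins two of $v$'s \emph{pseudo-neighbours} $x,y$ (these are always $t_i$- or $s_j$-type vertices, hence with lists $\{0,1,2\}$ or $\{0,1\}$) by a length-two path $xu_iy$ drawn inside a face, with $L(u_i)=\{2,3\}$. Because no neighbour of $u_i$ ever uses colour $3$, $u_i$ can be permanently frozen at $3$; the new vertex neither constrains nor is constrained by any move, so the reconfiguration graphs are literally in bijection (no lifting/collapsing argument is needed). Termination follows because each step strictly reduces the number of blocks. The key idea you are missing is precisely this targeting: connect only to vertices whose lists omit a common colour, so the auxiliary vertex can be frozen. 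If you want to keep a stellation-style construction, you would need to first route around the forbidding paths—which is essentially what the paper's path-insertion does, but locally and only where needed.
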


Let $G$ be a connected graph.
For $v \in V(G)$,
we denote by $G - v$ the graph obtained from $G$ by removing $v$ (and all incident edges to $v$).
A vertex $v \in V(G)$ is called a \emph{cut vertex}
if $G-v$ is not connected.
A \emph{block} of $G$ is a maximal connected subgraph of $G$ without any cut vertex.

\begin{proof}[Proof of Lemma~\ref{2-conn}]
We may assume that $H$ is connected.
Let $X$ be a graph in $\mathcal{X}$ such that $H=H(X)$.
Let $H_0:=H$ and for $i\geq 0$,
suppose that $H_i$ contains a cut vertex $v$ of $H_i$ with $v \in V(X^{\prime})$.
Then there are two pseudo-neighbors $x$ and $y$ of $v$
such that $x$ and $y$ are contained in different components of $H_i-v$
and lie on the boundary of a face $f$ of $H_i$.
Let $H_{i+1}$ be the plane graph obtained from $H_i$ by
connecting $x$ and $y$
by a path $xu_iy$ so that
it divides the face $f$ into two new faces.
Note that the number of blocks of $H_{i+1}$ is strictly less than the number of blocks of $H_i$.
Thus, performing this operation as long as possible,
we can find an integer $\ell$ such that $H_{\ell}$ has no cut vertex $v$ with $v \in V(H)$.
It is easy to see that $H_{\ell}$ is indeed $2$-connected.

Let $L$ be the list assignment of $H_{\ell}$ defined by
\[
L(v):=\begin{cases}
L_H(v) &\text{if $v \in V(H)$,} \\
\{2,3\} &\text{if $v \in \{u_i \mid  1\leq i\leq \ell\}$.}
\end{cases}
\]
For a map $\gamma$ from $V(G)$ to $\{0,1,2,3\}$,
let $\gamma^{\prime}$ be defined as
\begin{align*}
\gamma^{\prime}(v) &:=
\begin{cases}
\gamma(v) &\text{if $v \in V(H)$,} \\
3 &\text{if $v \in \{u_i \mid  1\leq i\leq \ell\}$}.
\end{cases}
\end{align*}
Since the list of a vertex in $\bigcup_{1\leq i\leq \ell}\left(N_{H_{\ell}}(u_i)\right)$ does not contain $3$,
a map $\gamma$ from $V(G)$ to $\{0,1,2,3\}$ is an $L_H$-coloring of $H$ if and only if
$\gamma^{\prime}$ is an $L$-coloring of $H_{\ell}$.
Hence $(H,L_H,\alpha,\beta)$ is a YES-instance if and only if $(H_{\ell},L,\alpha^{\prime},\beta^{\prime})$ is a YES-instance.
This completes the proof of Lemma~\ref{2-conn}.
\end{proof}

We denote by $\mathcal{H}^{\prime}$
the set of graphs $H^{\prime}$ defined as in Lemma~\ref{2-conn} for $H \in \mathcal{H}$.
For $H \in \mathcal{H}^{\prime}$, let $L_H$ be the list assignment defined as $L$ in the proof of Lemma~\ref{2-conn}.
By the condition of Lemma~\ref{forbidding} and the definition of $L_H$, we obtain the following lemma, which is used in the proof of Theorem~\ref{thm:equivalent}.

\begin{lemma}\label{useful}
Let $H$ be a graph in $\mathcal{H}^{\prime}$.
For a vertex $v \in V(H)$ with $\#L(v)=3$, the list size of a neighbor of $v$ is two and its list contains $3$.
\end{lemma}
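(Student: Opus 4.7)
The plan is a direct case analysis based on how the list $L$ was built. First I would observe that the only vertices of $H^{\prime}$ with list size $3$ are the $t_i$'s arising from contracted triangles of $X$: by definition, $L_H(s_j)=\{0,1\}$, the augmentation vertices have $L(u_k)=\{2,3\}$, and every internal vertex of a forbidding path has list size exactly $2$ by Lemma~\ref{forbidding}(ii). So it suffices to fix $v=t_i$ and examine an arbitrary neighbor $w$ of $t_i$ in $H^{\prime}$.

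The neighbors of $t_i$ come from two sources. If $w$ already lies in $H(X)$, then $w$ must be the first internal vertex (i.e.\ the unique neighbor of $t_i$ on the path) of some forbidding path that replaced an edge of $X^{\prime}$ incident to $t_i$; the other endpoint of that edge in $X^{\prime}$ is either another $t_{i^{\prime}}$ or some $s_j$. In both subcases the two endpoint lists are contained in $\{0,1,2\}$, so their union is a proper subset of $\{0,1,2\}$ in fact equal to $\{0,1,2\}$, forcing the ``free color'' $c$ of Lemma~\ref{forbidding} to be $3$. Condition~(v) of that lemma then yields $3\in L(w)$, and condition~(ii) yields $\#L(w)=2$. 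If instead $w=u_k$ is an augmentation vertex added in the proof of Lemma~\ref{2-conn}, then by construction $L(u_k)=\{2,3\}$, which again has size $2$ and contains $3$.

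In both cases the conclusion holds, so the lemma follows. No real obstacle arises here; the statement is essentially a bookkeeping record of the fact that the parameter $c$ in Lemma~\ref{forbidding} is forced to equal $3$ on every edge of $X^{\prime}$ touching a $t_i$, together with the deliberate choice $L(u_k)=\{2,3\}$ made in the $2$-connectivity augmentation.
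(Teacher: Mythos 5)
Your argument is correct and is exactly the verification the paper leaves implicit when it says the lemma follows ``by the condition of Lemma~\ref{forbidding} and the definition of $L_H$'': you correctly identify that only the $t_i$'s have list size~$3$, that their neighbours are either first internal vertices of forbidding paths (where the two endpoint lists $\{0,1,2\}$ and $\{0,1,2\}$ or $\{0,1\}$ union to $\{0,1,2\}$, forcing $c=3$, so conditions~(ii) and~(v) of Lemma~\ref{forbidding} give $\#L(w)=2$ and $3\in L(w)$) or augmentation vertices $u_k$ with $L(u_k)=\{2,3\}$. One small slip: the phrase ``a proper subset of $\{0,1,2\}$'' should read ``a proper subset of $\{0,1,2,3\}$'' (or simply ``a subset of $\{0,1,2\}$''), since you immediately note the union equals $\{0,1,2\}$; this is a typo and does not affect the argument.
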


By Theorem \ref{lrecolor} and Lemma~\ref{2-conn}, we obtain the following theorem.

\begin{theorem}\label{thm:2-conn}
\Lrecolor is PSPACE-complete for $H \in \mathcal{H}^{\prime}$ and the list assignment $L_H$.
\end{theorem}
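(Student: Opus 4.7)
The plan is to derive the theorem immediately by combining \Cref{lrecolor} with \Cref{2-conn}, so the only substantive work is to verify that the reduction in \Cref{2-conn} is polynomial-time and to check PSPACE-membership.

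For membership in PSPACE, I would appeal to the standard argument that \Lrecolor lies in PSPACE regardless of the input class: the reconfiguration graph $\mathcal{R}(G,L)$ has at most $4^{\#V(G)}$ vertices and its edges can be recognized in polynomial time from the descriptions of two colorings, so reachability can be decided in nondeterministic polynomial space and hence (by Savitch) in PSPACE. This gives membership for $H \in \mathcal{H}'$ as a special case.

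For PSPACE-hardness, I would reduce from \Lrecolor on the class $(\mathcal{H}, L_H)$, which is PSPACE-complete by \Cref{lrecolor}. Given an instance $(H, L_H, \alpha, \beta)$ with $H \in \mathcal{H}$, apply the construction in the proof of \Cref{2-conn} to produce $(H', L, \alpha', \beta')$ with $H' \in \mathcal{H}'$. That lemma already guarantees the equivalence of YES/NO answers between the two instances, so the main point to check is that this construction is polynomial-time computable. Each iteration of the construction attaches a path of length two through a new vertex $u_i$ across some face of the current graph, strictly decreasing the number of blocks; hence the number of iterations is at most the initial number of blocks of $H$, which is bounded by $\#V(H)$. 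Each iteration adds $O(1)$ vertices and edges and can be carried out in polynomial time once a suitable face and pair of pseudo-neighbors $x, y$ are identified, which is standard planar-graph bookkeeping. Defining $L$ and extending $\alpha, \beta$ to $\alpha', \beta'$ by assigning color $3$ to every new vertex $u_i$ (this is consistent since $L(u_i) = \{2,3\}$ and no neighbor of $u_i$ has $3$ in its list) takes linear additional time.

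There is essentially no obstacle here beyond the verification above: all the structural work has been done in the proof of \Cref{2-conn}, and PSPACE-membership is a routine observation. Combining polynomial-time reduction with YES/NO equivalence and PSPACE-membership yields PSPACE-completeness of \Lrecolor for $H \in \mathcal{H}'$ with list assignment $L_H$.
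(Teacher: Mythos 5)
Your proof is correct and matches the paper's approach, which is simply to combine \Cref{lrecolor} (PSPACE-hardness for $\mathcal{H}$) with \Cref{2-conn} (the reduction from $\mathcal{H}$ to $\mathcal{H}'$); the extra checks you supply—polynomial running time of the reduction and the routine PSPACE-membership argument—are implicit in the paper but correctly verified in your write-up.
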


\subsection{Proof of Theorem~\ref{thm:pspace}}

The following theorem is crucial in the proof of \Cref{thm:pspace}:
\begin{theorem}\label{thm:equivalent}
Let $(H, L_H, \alpha,\beta)$ be an instance of \Lrecolor
with $H \in \mathcal{H}^{\prime}$ defined as in Section~\ref{pre}.
Then, for every $k\geq 4$,
there is an instance $(G, \alpha^{\prime}, \beta^{\prime})$ 
of \kplusrecolor,
where
$G$ is a $(k-1)$-colorable triangulation of the $(k-2)$-sphere,
such that
$(H,L_H,\alpha,\beta)$ is a YES-instance if and only if $(G,\alpha^{\prime}, \beta^{\prime})$ is a YES-instance.
\end{theorem}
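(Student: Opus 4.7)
The plan is to construct, from the given \Lrecolor instance $(H,L_H,\alpha,\beta)$, a $(k-1)$-colorable triangulation $G$ of $S^{k-2}$ together with two $(k+1)$-colorings $\alpha',\beta'$ whose reconfiguration equivalence mirrors that of the original list-recoloring problem. I would handle the planar case $k=4$ first, producing a $3$-colorable triangulation of $S^2$, and then lift to $k\geq 5$ by iterated suspension.

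For $k=4$, I would start from the $2$-connected plane graph $H\in\mathcal{H}'$, subdivide each edge once so that every face of the refined graph has even length, and then insert a new vertex $w_f$ in each face $f$ and join $w_f$ to every vertex on the boundary of $f$. The resulting graph $G_0$ is a triangulation of $S^2$ in which every vertex has even degree (each $v\in V(H)$ receives $2\deg_H(v)$, each subdivision vertex receives $4$, and each face vertex receives an even degree equal to the length of its face in the refined graph), hence $G_0$ is $3$-colorable. The coloring $\alpha'_0$ is built by keeping $\alpha$ on $V(H)$ and choosing ``reference'' colors on the auxiliary vertices so that, around each $v\in V(H)$, the non-$V(H)$ neighbors collectively exhibit every color in $\{0,1,2,3,4\}\setminus L_H(v)$, blocking $v$ from being recolored outside its list. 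To make these blockings robust under single-changes I would also arrange that every auxiliary vertex is \emph{locked}: it has four distinctly-colored neighbors, so that its own color is forced. If a direct assignment does not suffice for enforcement and locking to coexist, I would replace each $v\in V(H)$ by a small choosing gadget whose outer interface effectively has list $L_H(v)$ and whose interior vertices are locked. The coloring $\beta'_0$ is defined analogously on the same auxiliary layout.

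For $k>4$, I would iterate suspension: set $G_d:=G_{d-1}\ast\{a_d,b_d\}$, where $a_d,b_d$ are two non-adjacent new vertices each joined to every vertex of $G_{d-1}$. Using $S^d\ast S^0=S^{d+1}$, after $k-4$ suspensions $G:=G_{k-2}$ is a $(k-1)$-colorable triangulation of $S^{k-2}$. The colorings $\alpha',\beta'$ extend $\alpha'_0,\beta'_0$ by assigning each pair $(a_i,b_i)$ the common color $2+i$ (permissible since they are non-adjacent). Since each apex is joined to the entire underlying triangulation, it is essentially pinned to its reference color in every reachable coloring, so the $(k+1)$-recoloring dynamics on $V(G)$ collapse to the $5$-recoloring dynamics on $V(G_0)$. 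The forward direction of the equivalence (YES for $H$ implies YES for $G$) is then immediate: each single-change of $v\in V(H)$ in an $L_H$-recoloring is mirrored by the same single-change at $v\in V(G)$, with all auxiliary and apex vertices left untouched.

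The main obstacle I foresee is the converse direction: I must show that any $(k+1)$-recoloring sequence from $\alpha'$ to $\beta'$ in $G$ can be canonicalized so that no auxiliary or apex vertex ever deviates from its reference color, after which its projection to $V(H)$ is a legal $L_H$-recoloring of $H$. This rests on the locking property together with a move-reordering argument that replaces spurious auxiliary recolorings by no-ops. Making locking compatible with the even-degree and planarity constraints of a triangulation of $S^2$, using only two colors beyond the chromatic number in the two-dimensional base, is the delicate part; this is where the restricted list sizes (at most three) in $\mathcal{H}'$ become essential, since they leave enough forbidden colors to enforce locking through a finite case analysis on the list patterns appearing in $\mathcal{H}'$.
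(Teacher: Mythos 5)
Your high-level plan (embed $H$ into a $3$-colorable even triangulation of $S^2$, lock the auxiliary vertices so that only $V(H)$ can move within its list constraints, and then suspend for $k>4$) is the same as the paper's, and the suspension step matches exactly. But the concrete $k=4$ construction has two genuine problems, the second of which is the crux of the whole proof.

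First, the subdivision step is both unnecessary and harmful. It is unnecessary because the forbidding paths produced by Lemma~\ref{forbidding} already have even length (condition~(iii)), so every face of $H$ already has even length; no subdivision is needed for $3$-colorability. It is harmful because subdividing an edge $\{u,v\}$ replaces the hard constraint $\alpha(u)\neq\alpha(v)$ by two constraints through a subdivision vertex $s$, and once $s$ is locked to a fixed color $c$ it only forces $\alpha(u)\neq c$ and $\alpha(v)\neq c$ --- it no longer forces $\alpha(u)\neq\alpha(v)$. The forbidding-path machinery relies on consecutive path vertices having overlapping lists of size $2$ (e.g.\ $L(v_1)=L(v_2)=\{a,c\}$), where the edge constraint genuinely does work; subdividing destroys this and changes the reachability structure.

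Second, and more seriously, you have deferred the actual construction of the locking gadget (``if a direct assignment does not suffice\dots I would replace each $v$ by a small choosing gadget''). This is precisely where the content of the proof lies. Plain face vertices do not suffice: a vertex $v\in V(H)$ with $\#L_H(v)=2$ needs \emph{three} distinct blocked colors supplied by its non-$V(H)$ neighbors (so that $L_H(v)=\{0,\dots,4\}\setminus\bigcup_{w\in N_G(v)\setminus V(H)}\alpha'(w)$ holds), while a single layer of face vertices forced to color $4$ supplies only one. The paper resolves this by inserting into every triangular face of the intermediate graph $G'$ a copy of a fixed gadget $J$, a $3$-colorable even triangulation of $S^2$ that admits a \emph{frozen} $5$-coloring, glued in so that the three interface vertices $w_1^f,w_2^f,w_3^f$ both stay even-degree and carry carefully chosen colors (conditions (a)--(d) of the ``restricted'' coloring, verified via two subclaims). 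Building such a gadget, keeping the whole graph a $3$-colorable even triangulation, and proving that the resulting auxiliary vertices are all frozen and that the list constraints on $V(H)$ are exactly reproduced, is the part your proposal does not supply. Without it there is no argument for the converse direction (that every $5$-recoloring sequence in $G$ projects to a legal $L_H$-recoloring of $H$), which you correctly identify as the main obstacle but leave unresolved.
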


If \Cref{thm:equivalent} holds,
we can obtain \Cref{thm:pspace} as follows.
\begin{proof}[Proof of Theorem~\ref{thm:pspace}]
Let $(H, L_H, \alpha, \beta)$ be an instance of \Lrecolor
with $H \in \mathcal{H}$,
defined as in Section~\ref{pre}.
By Theorem~\ref{thm:equivalent},
we can construct in polynomial time
an instance $(G_k, \alpha^{\prime}, \beta^{\prime})$
of \kplusrecolor,
where $G_k$ is a $(k-1)$-colorable triangulation of $(k-2)$-sphere,
such that
$(H,L_H,\alpha,\beta)$ is a YES-instance of \Lrecolor if and only if $(G_k,\alpha^{\prime}, \beta^{\prime})$ is a YES-instance of \kplusrecolor.
This together with Theorem~\ref{thm:2-conn} show that
\kplusrecolor for $(k-1)$-colorable triangulations of the $(k-2)$-sphere is PSPACE-complete.
\end{proof}

In the following, we present the proof of \Cref{thm:equivalent}.
For a graph $G$ and a vertex $v \in V(G)$,
let $\deg_G(v)$ denote the number of edges incident to $v$,
i.e., $\deg_G(v) := \# \delta_G(v)$.
Recall that
a triangulation $K$ of a simply-connected closed $d$-manifold is $(d+1)$-colorable if and only if it is even,
i.e., $\#\St^{d}(\sigma^{d-2})$ is even for every $(d-2)$-simplex $\sigma^{d-2}\in K$.

\begin{proof}[Proof of \Cref{thm:equivalent}]
We prove Theorem~\ref{thm:equivalent} by induction on $k \geq 4$.
We first show the case of $k=4$.
We construct a $3$-colorable triangulation $G$ of the $2$-sphere from $H$.
For each face $f$ of $H$, we add a new vertex $v_f$ to $f$ and add an edge connecting $v_f$ and every vertex lying on the boundary of $f$.
Let $G^{\prime}$ be the resulting graph. Then, every face of $G^{\prime}$ contains a vertex not in $V(H)$.
See \figurename~\ref{fig:reduction2} (Right).

\begin{claim}\label{even_tri1}
$G^{\prime}$ is a $3$-colorable triangulation of the $2$-sphere.
\end{claim}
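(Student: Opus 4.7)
The plan is to verify two things about $G^{\prime}$: that it is a triangulation of the $2$-sphere, and that its $1$-skeleton is $3$-colorable, which by the discussion in \Cref{sec:preliminaries} is equivalent to every vertex having even degree.

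For the first part, I would use that $H \in \mathcal{H}^{\prime}$ is $2$-connected and planar by \Cref{2-conn}, so in its embedding on $S^2$ the boundary of every face is a simple cycle. Adding $v_f$ inside each face $f$ and joining it to every vertex of $\partial f$ subdivides $f$ into $|\partial f|$ triangles. Hence every face of $G^{\prime}$ is a triangle, and $G^{\prime}$ is a triangulation of $S^2$.

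For the second part, I would compute degrees. For an original vertex $u \in V(H)$, the $2$-connectedness of $H$ ensures that each edge-angle at $u$ in the planar embedding lies on the boundary of a distinct face, so $u$ is incident to exactly $\deg_H(u)$ faces of $H$; hence $\deg_{G^{\prime}}(u) = 2\deg_H(u)$, which is even. For a new vertex $v_f$, we have $\deg_{G^{\prime}}(v_f) = |\partial f|$, so the claim reduces to the key fact that every face of $H$ has even boundary length.

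The cleanest route to this is to prove that $H$ is bipartite. Indeed, $H(X)$ is obtained from $X^{\prime}$ by replacing each edge with a forbidding path, which by condition (iii) of \Cref{forbidding} has even length (explicitly, the construction uses length $6$). Any cycle of $H(X)$ projects to a closed walk in $X^{\prime}$ with each step replaced by a path of even length, so every cycle of $H(X)$ has even length and $H(X)$ is bipartite. Moreover, the augmentation in the proof of \Cref{2-conn} adds a path $x u_i y$ of length $2$ whose endpoints $x, y$ are pseudo-neighbors of a cut vertex $v \in V(X^{\prime})$; since $x, y, v$ are all vertices of $X^{\prime}$ and are pairwise connected by forbidding paths of even length in $H_i$, the vertices $x$ and $y$ lie in the same bipartition class of $H_i$, so inserting a length-$2$ path between them preserves bipartiteness. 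By induction, $H$ is bipartite, every face of $H$ has even boundary length, and the claim follows. The main potential pitfall is the parity analysis during the $2$-connectivity augmentation, since, a priori, a length-$2$ path subdividing a face need not split it into two subfaces of even length; the argument above avoids this by tracking a global bipartition rather than individual face lengths.
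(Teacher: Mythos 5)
Your proof is correct and follows the same degree-counting approach as the paper: both observe that $G'$ being a triangulation follows from $2$-connectedness of $H$, that original vertices have $\deg_{G'}(u)=2\deg_H(u)$, and that new vertices $v_f$ have degree equal to the face boundary length, so the crux is that every face of $H$ has an even boundary. The paper disposes of this last step with the single sentence ``Since every forbidding path in $H$ has even length, the number of vertices lying on the boundary of each face of $H$ is even,'' which glosses over exactly the issue you flag: the $2$-connectivity augmentation of \Cref{2-conn} inserts paths $xu_iy$ that are not forbidding paths, and splitting a face with a length-$2$ path does not, in isolation, obviously preserve evenness of both resulting subfaces. Your bipartiteness argument --- $H(X)$ is bipartite because each edge of $X'$ is replaced by an even path, the pseudo-neighbors $x,y$ lie in $V(X')$ hence in the same part, and inserting a length-$2$ path between same-part vertices preserves bipartiteness --- is a clean, global way to establish what the paper asserts locally, and it removes the potential gap. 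So: same overall strategy, but your version supplies a justification for the face-parity step that the paper leaves implicit.
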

\begin{proof}
Since $H$ is $2$-connected, there is no multiple edges in $G^{\prime}$.
Thus, it is easy to see that $G^{\prime}$ is a triangulation.
We only have to prove that the degree of every vertex of $G^{\prime}$ is even.
Let $v$ be a vertex of $G^{\prime}$. If $v$ is contained in $V(H)$, then $\deg_{G^{\prime}}(v)=\# \St^2_{H}(v)+\deg_{H}(v)=2\deg_{H}(v)$.
If $v$ is not contained in $V(H)$, then $\deg_{G^{\prime}}(v)$ is equal to the number of vertices lying on the boundary of the face of $H$ containing $v$.
Since every forbidding path in $H$ has even length,
the number of vertices lying on the boundary of each face of $H$ is even.
In either case, $\deg_{G^{\prime}}(v)$ is even.
\end{proof}

\begin{figure}
\centering
\includegraphics{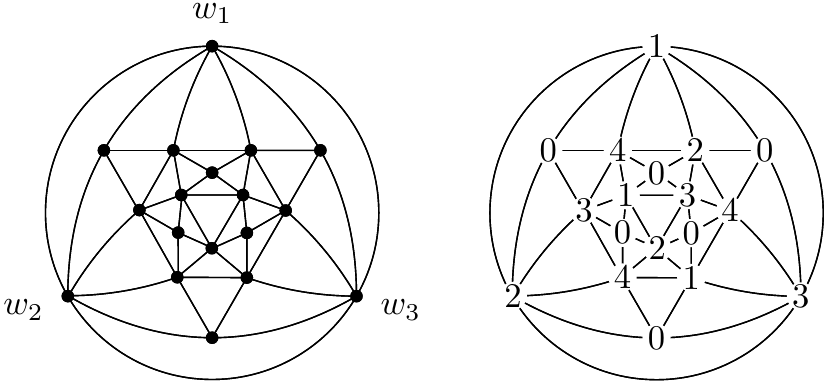}
\caption{(Left) The graph $J$. (Right) A frozen $5$-coloring of $J$.}\label{frozen}
\end{figure}

Let $J$ be the $3$-colorable triangulation of the $2$-sphere shown in \figurename~\ref{frozen} (Left) and
let $w_1$, $w_2$, and $w_3$ be the vertices lying on the boundary of the outer face of $J$.
There is a $5$-coloring $c$ of $J$ such that $\bigcup_{w \in N_J(v)}c(w)=\{0,1,2,3,4\}\setminus \{c(v)\}$ for every vertex $v \in V(J)$, i.e.,
all colors except for $c(v)$ appear in the neighbors of $v$; see \figurename~\ref{frozen} (Right).
Such a $5$-coloring is said to be \emph{frozen} since no single-change can be performed.
We refer to a frozen $5$-coloring $\alpha$ as a \emph{$(c_1,c_2,c_3)$-frozen $5$-coloring} if $\alpha(w_i)=c_i$ for $1\leq i\leq 3$.
Let $G$ be the plane graph obtained from $G^{\prime}$ as follows:
For each face $f=\{x^f,y^f,z^f\}$ of $G^{\prime}$ with $x^f \notin V(H)$, we add the graph $J_f$,
which is isomorphic to $J$, to $f$ and edges $w_1^fx^f,w_1^fy^f,w_2^fx^f,w_2^fz^f,w_3^fy^f,w_3^fz^f$
such that $\{w_1^f,w_2^f,x^f\}, \{w_1^f, w_3^f, y^f\},\{w_2^f, w_3^f, z^f\}$ are faces of $G$, where $w_i^f$ is the vertex in $J_f$ corresponding to $w_i$ for $1\leq i\leq 3$.
See \figurename~\ref{fig:inserting_J1}.

\begin{figure}
    \centering
    \includegraphics{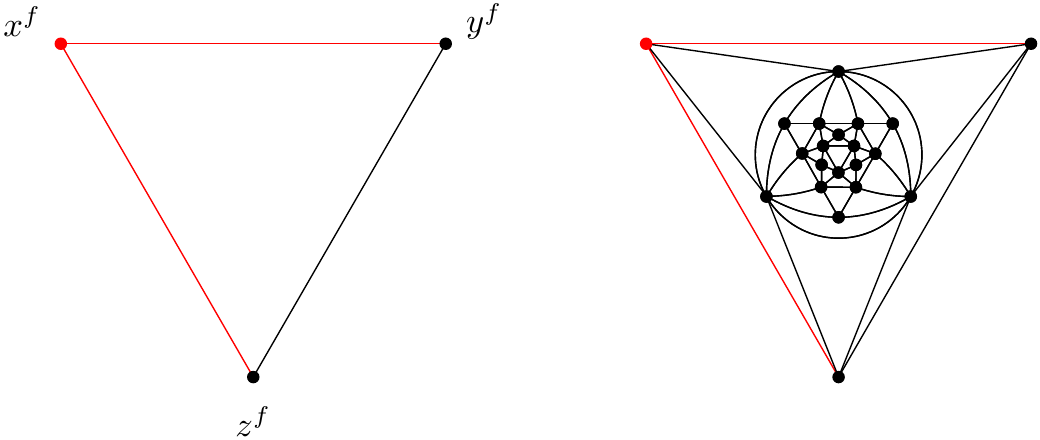}
    \caption{Inserting a copy of $J$ in each face $f$ of $G'$ to obtain the graph $G$.}
    \label{fig:inserting_J1}
\end{figure}

\begin{claim}\label{even_tri2}
$G$ is a $3$-colorable triangulation of the $2$-sphere.
\end{claim}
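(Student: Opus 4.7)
My approach is to verify the two halves of the claim separately: first that $G$ is a (simplicial) triangulation of $S^2$, and second that every vertex has even degree, which by the Eulerian criterion for triangulations of $S^2$ yields $3$-colorability. By Claim~\ref{even_tri1}, $G'$ is already a triangulation of $S^2$. The construction subdivides each face $f = \{x^f, y^f, z^f\}$ of $G'$ by inserting a copy $J_f$ of $J$ and six new edges $w_i^f x^f, w_i^f y^f, w_i^f z^f$. The three triangles named in the construction, namely $\{w_1^f,w_2^f,x^f\}$, $\{w_1^f,w_3^f,y^f\}$, $\{w_2^f,w_3^f,z^f\}$, are the ``corner'' triangles at $x^f,y^f,z^f$; a short Euler-characteristic count (using $\#V(J)-\#E(J)+\#F(J)=2$) shows that three additional ``edge'' triangles $\{x^f,y^f,w_1^f\}$, $\{y^f,z^f,w_3^f\}$, $\{x^f,z^f,w_2^f\}$ must also be faces of $G$. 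These six triangles together with the faces of $J_f$ other than its outer face $\{w_1^f,w_2^f,w_3^f\}$ tile the interior of $f$. Simplicity of $G$ is immediate from the disjointness of $V(J_f)$, $V(J_{f'})$ for $f\neq f'$, and $V(G')$.

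For the even-degree part I would proceed by a four-case analysis on $v \in V(G)$. If $v \in V(H)$, then $v$ is never a vertex of any $J_f$, and in every face $f'$ of $G'$ containing $v$ the vertex $v$ plays the role of $y^{f'}$ or $z^{f'}$ (never $x^{f'}$, since $x^{f'}\notin V(H)$), gaining exactly two new neighbors per incident face; hence $\deg_G(v) = \deg_{G'}(v) + 2\#\St^2_{G'}(v) = 3\deg_{G'}(v) = 6\deg_H(v)$. If $v = x^f \in V(G') \setminus V(H)$, then $v$ is the $x$-vertex in every face of $G'$ containing it, again gaining two new neighbors per face, so $\deg_G(v) = 3\deg_{G'}(v) = 3n_f$, where $n_f$ is the number of vertices on the boundary of the corresponding face of $H$. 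If $v = w_i^f$ for some $i \in \{1,2,3\}$, then $\deg_G(v) = \deg_J(w_i) + 2$; and if $v$ is an interior vertex of some $J_f$, then $\deg_G(v) = \deg_J(v)$. In all four cases the result is even: in the last two cases because $J$ is $3$-colorable hence Eulerian, and in the first two because $\deg_H(v)$ and $n_f$ are even (the latter a sum of even forbidding-path lengths, as observed in the proof of Claim~\ref{even_tri1}).

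The main obstacle is identifying the face structure of $G$ correctly: although the construction explicitly names only three triangles per inserted $J_f$, Euler's formula forces three additional ``edge'' triangles, and one must recognize them in order to verify that every bounded region is a triangular face. Once the face structure is pinned down, the degree bookkeeping is routine, with the key observation being that a vertex in $V(H)$ is never selected as the $x$-vertex of any face of $G'$ (since $x^{f'}\notin V(H)$ by the choice made in the construction), so its neighbor-gain per incident face is uniformly $+2$.
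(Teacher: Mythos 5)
Your proof is correct and follows essentially the same route as the paper's: verify the triangulation structure, then check even degrees by cases. The paper treats all of $V(G')$ in a single case via $\deg_G(v) = \deg_{G'}(v) + 2\cdot\#\St^2_{G'}(v)$, which is the same ``$+2$ per incident face'' count you use, just without splitting into the $V(H)$ and $V(G')\setminus V(H)$ subcases; your extra subdivision is harmless. One small correction of emphasis: the remark you call the ``key observation''---that a vertex of $V(H)$ is never chosen as $x^f$---is true but irrelevant to the degree bookkeeping, since \emph{each} of $x^f, y^f, z^f$ appears in exactly two of the six added edges $w_1^fx^f, w_1^fy^f, w_2^fx^f, w_2^fz^f, w_3^fy^f, w_3^fz^f$, so the ``$+2$ per face'' holds for all three corner roles, not just for $y^f$ and $z^f$. (The constraint $x^f \notin V(H)$ in the construction serves a different purpose in the later argument, namely ensuring each $J_f$ abuts a vertex that will carry color $4$.)
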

\begin{proof}
It is easy to see that $G$ is a triangulation of the $2$-sphere.
We only have to prove that the degree of every vertex of $G$ is even.
Let $v$ be a vertex of $G$. If $v$ is contained in $V(G^{\prime})$, then $\deg_G(v)=\deg_{G^{\prime}}(v)+2 \cdot \# \St^2_{G^{\prime}}(v)$ is even since $\deg_{G^{\prime}}(v)$ is even.
If $v$ is not contained in $V(G^{\prime})$, then $v$ is contained in $J_f$ for some face $f$ of $G^{\prime}$ and
\[ \deg_G(v)=
\begin{cases}
\deg_{J_f}(v) & \text{if $v \notin \{w_1^f,w_2^f,w_3^f\}$,} \\
\deg_{J_f}(v)+2 & \text{if $v \in \{w_1^f,w_2^f,w_3^f\}$,}
\end{cases}
\]
which is even since $\deg_{J_f}(v)$ is even.
Hence, $G$ is a $3$-colorable triangulation of the $2$-sphere.
\end{proof}

For an $L_H$-coloring $\alpha$ of $H$, a $5$-coloring $\alpha^{\prime}$ of $G$ is said to be \emph{restricted} with respect to $\alpha$
if $\alpha^{\prime}$ satisfies the following conditions:
\begin{enumerate}
\item[(a)] $\alpha^{\prime}(v)=\alpha(v)$ for every $v \in V(H)$,
\item[(b)] $\alpha^{\prime}(v)=4$ for each vertex $v \in \{w_3^f \mid f \in F(G^{\prime})\} \cup V(G^{\prime})\setminus V(H)$,
\item[(c)] 
$\alpha^{\prime}(w_1^f) \in \{0,1,2,3\} \setminus L_H(y^f)$ and
$\alpha^{\prime}(w_2^f) \in \{0,1,2,3\} \setminus L_H(z^f)$ for every face $f$ of $G^{\prime}$, and
\item[(d)] $L_H(v)=\{0,1,2,3,4\}\setminus \bigcup_{w \in N_G(v)\setminus V(H)}\alpha^{\prime}(w)$ for every vertex $v \in V(H)$.
\end{enumerate}
\begin{claim}
For an $L_H$-coloring $\alpha$ of $H$, there is a restricted $5$-coloring $\alpha^{\prime}$ of $G$ with respect to $\alpha$.
\end{claim}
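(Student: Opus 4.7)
The plan is to construct $\alpha'$ in layers. First I would set $\alpha'|_{V(H)}=\alpha$, giving (a), and color every face-center in $V(G^{\prime})\setminus V(H)$ together with every $w_3^f$ by $4$, giving (b). Since $L_H(v)\subseteq\{0,1,2,3\}$ for every $v\in V(H)$, the color $4$ never belongs to $L_H(v)$, so this partial assignment is already compatible with (d).

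Next, for each face $f=\{x^f,y^f,z^f\}$ of $G^{\prime}$ (where $x^f\notin V(H)$ is a face-center of $H$ and $y^f,z^f\in V(H)$), I would choose an ordering of the two $V(H)$-vertices as $y^f$ and $z^f$ and pick $\alpha'(w_1^f)\in\{0,1,2,3\}\setminus L_H(y^f)$ and $\alpha'(w_2^f)\in\{0,1,2,3\}\setminus L_H(z^f)$ distinct from each other. This gives (c) and properly colors the outer triangle $w_1^f w_2^f w_3^f$ of $J_f$. I would then extend $\alpha'$ to the interior of $J_f$ using a $(\alpha'(w_1^f),\alpha'(w_2^f),4)$-frozen $5$-coloring of $J$, which exists by permuting the colors in the frozen $5$-coloring of \figurename~\ref{frozen}.

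The only non-trivial task is verifying (d). For $v\in V(H)$, the non-$V(H)$ neighbors in $G$ consist of the face-centers $v_{f'}$ and the $w_3^f$'s (all colored $4$), and, for each face $f$ of $G^{\prime}$ containing $v$, exactly one of $w_1^f$ or $w_2^f$, whose color lies in $\{0,1,2,3\}\setminus L_H(v)$ by (c). Thus no color of $L_H(v)$ appears on a non-$V(H)$ neighbor of $v$, and it only remains to ensure that every color of $\{0,1,2,3\}\setminus L_H(v)$ does appear. When $\#L_H(v)=3$ the complement is a singleton and is forced by (c), so (d) is immediate. When $\#L_H(v)=2$ the complement has two elements; since $v$ lies on $2\deg_H(v)\geq 4$ faces of $G^{\prime}$, there is enough room to arrange the choices so that both colors appear, and the interaction between adjacent $V(H)$-vertices is controlled by Lemma~\ref{useful}. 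Establishing this global consistency of the per-face choices is the main obstacle, to be handled by a short case analysis on the list types of the two $V(H)$-vertices on each face of $G^{\prime}$.
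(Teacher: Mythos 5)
Your outline matches the paper's construction closely (color the face-centers and the $w_3^f$'s with $4$, place $w_1^f,w_2^f$ in the complements of the adjacent lists, and fill each $J_f$ with a permuted frozen $5$-coloring), and the reduction of the problem to condition~(d) is also right. But you stop exactly where the real work begins. Saying the ``global consistency of the per-face choices'' is ``to be handled by a short case analysis on the list types of the two $V(H)$-vertices on each face of $G'$'' mislocates the difficulty: the constraint is not local to a single face. Each $u\in V(H)$ with $\#L_H(u)=2$ receives one $w$-neighbor from every face of $G'$ incident to $u$, and those faces belong to several different stars $\St^2_{G'}(v_{f'})$; you must ensure both colors of $\{0,1,2,3\}\setminus L_H(u)$ actually occur among these choices, while each individual choice is simultaneously constrained by the list of the \emph{other} $V(H)$-vertex on its face, and all of this has to close up consistently around a cycle.

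The paper handles this by working one face-center $v$ at a time: it orders the faces $f_1^v,\dots,f_{m(v)}^v$ cyclically around $v$ so that $z^{f_i^v}=y^{f_{i+1}^v}$, and constructs $\alpha_v$ \emph{sequentially}, choosing $\alpha_v(w_1^{f_i^v})$ so that it differs from $\alpha_v(w_2^{f_{i-1}^v})$ whenever the shared $V(H)$-vertex has a list of size two. This forces the two $w$-neighbors of that vertex coming from the star of $v$ alone to exhaust $\{0,1,2,3\}\setminus L_H(u)$, which already gives (d) globally because (c) prevents any stray color from entering. Two additional ingredients you omit are needed to make this sequential choice well-defined: the Subclaim that for any $c\in\{0,1,2,3\}\setminus L_H(y^{f_i^v})$ there exists a color in $\{0,1,2,3\}\setminus(L_H(z^{f_i^v})\cup\{c\})$ (which rests on Lemma~\ref{useful} and on the structure of the forbidding-path lists), and the normalization ``we may assume $y^{f_1^v}$ is an end-vertex of some forbidding path'' together with the special starting value $\alpha_v(w_1^{f_1^v})$, which is what lets the sequence close up around the cycle. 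Without these the argument has a genuine gap; as written, you have identified the target but not built the coloring that hits it.
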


\begin{proof}
For convenience, let $m(v):=\# \St^2_{G^{\prime}}(v)$ for $v \in V(G^{\prime})\setminus V(H)$.
For a vertex $v \in V(G^{\prime})\setminus V(H)$,
let $f_1^v,f_2^v,\ldots,f_{m(v)}^v$ be the faces in $\St^2_{G^{\prime}}(v)$ such that
$f_1^v,f_2^v,\ldots,f_{m(v)}^v$ appear on $\St^2_{G^{\prime}}(v)$ in this clockwise order and $z^{f_i^v}=y^{f_{i+1}^v}$ for $1\leq i\leq m(v)$,
where $y^{f_{m(v)+1}^v} := y^{f_1^v}$.
We prove the following subclaim.
\begin{subclaim} Let $v$ be a vertex in $V(G^{\prime})\setminus V(H)$ and let $i$ be an integer with $1\leq i\leq m(v)$.
Then the following hold.
\begin{enumerate}
    \item[{\rm (1)}] For each color $c \in \{0,1,2,3\}\setminus L_H(y^{f_i})$, there exists a color in $\{0,1,2,3\}\setminus (L_H(z^{f_i})\cup \{c\})$.
    \item[{\rm (2)}] If $\# L_H(z^{f_i^v})=3$, then for each color $c \in \{0,1,2,3\}\setminus L_H(y^{f_i})$,
    a color $3$ is contained in $\{0,1,2,3\}\setminus (L_H(z^{f_i})\cup \{c\})$.
\end{enumerate}
\end{subclaim}
\begin{proof}
(1). The construction of $H$ implies that $\# L_H(y^{f_i^v})$ and $\# L_H(z^{f_i^v}) $ are at most three and one of $\# L_H(y^{f_i^v}) $ and $\# L_H(z^{f_i^v})$ is exactly two.
Moreover, if one of $\# L_H(y^{f_i^v})$ and $\# L_H(z^{f_i^v})$ is three, then its list is $\{0,1,2\}$ and the other list contains $3$
by Lemma~\ref{useful}.
Hence for each color $c \in \{0,1,2,3\} \setminus L_H(y^{f_i^v})$,
there exists a color $c^{\prime} \in \{0,1,2,3\} \setminus (L_H(z^{f_i^v}) \cup \{c\})$.

(2) If $\# L_H(z^{f_i^v})=3$, then $L_H(z^{f_i^v})=\{0,1,2\}$ and $L_H(y^{f_i^v})$ contains $3$ and so the claim holds.
\end{proof}

We may assume that $y^{f_1^v}$ is an end-vertex of some forbidding path.
Let $\alpha_v$ be a map from $\{w_j^{f_i^v} \mid 1\leq i\leq m(v),\ 1\leq j\leq 2 \}$ to $\{0,1,2,3\}$ defined as follows:
\[ \alpha_v(w_1^{f_1^v}) :=\begin{cases}
2 &\text{if $L_H(y^{f_1^v})=\{0,1\}$}, \\
3 &\text{otherwise,}
\end{cases} \]
$\alpha_v(w_2^{f_1^v}) \in \{0,1,2,3\}\setminus (L_H(z^{f_1^v}) \cup \{\alpha_v(w_1^{f_1^v})\})$, and
for $2\leq i\leq m(v)$,
\begin{align*}
\alpha_v(w_1^{f_i^v})
&\begin{cases}
\in \{0,1,2,3\}\setminus (L_H(y^{f_i^v}) \cup \{\alpha_v(w_2^{f_{i-1}^v})\}) &\text{if $\# L_H(y^{f_i^v})=2$}, \\
:= 3 &\text{otherwise},
\end{cases}\\
\alpha_v(w_2^{f_i^v})
&\begin{cases}
\in \{0,1,2,3\}\setminus (L_H(z^{f_i^v}) \cup \{\alpha_v(w_1^{f_i^v})\}) &\text{if $\# L_H(z^{f_i^v})=2$}, \\
:= 3&\text{otherwise.}
\end{cases}
\end{align*}
By Subclaim above, there exists such a map $\alpha_v$ for every $v \in V(G^{\prime})\setminus V(H)$.
\begin{subclaim} For every vertex $v \in V(G^{\prime})\setminus V(H)$, the map $\alpha_v$ satisfies the following conditions: For every $1\leq i\leq m(v)$,
\begin{enumerate}
    \item[{\rm (i)}] $\alpha_v(w_1^{f_i^v}) \in \{0,1,2,3\} \setminus L_H(y^{f_i^v})$ and $\alpha_v(w_2^{f_i^v}) \in \{0,1,2,3\} \setminus L_H(z^{f_i^v})$,
    \item[{\rm (ii)}] $\alpha_v(w_1^{f_i^v}) \neq \alpha_v(w_2^{f_i^v})$, and
    \item[{\rm (iii)}] $L_H(y^{f_i^v})=\{0,1,2,3\}\setminus \{\alpha_v(w_2^{f_{i-1}^v}),\alpha_v(w_1^{f_i^v})\}$ and $L_H(z^{f_i^v})=\{0,1,2,3\}\setminus \{\alpha_v(w_2^{f_i^v}),\alpha_v(w_1^{f_{i+1}^v})\}$.
\end{enumerate}
\end{subclaim}
\begin{proof}
Let $i$ be an integer with $1\leq i\leq m(v)$.

We first prove (i) and (ii) together. If $\# L_H(z^{f_i^v})=2$, then (i) and (ii) hold by the definition of $\alpha_v$.
Suppose that $\# L_H(z^{f_i^v}) \neq 2$. Then $L_H(z^{f_i^v})=\{0,1,2\}$ and so (i) holds. By the construction of $H$ and Lemma~\ref{useful},
$\# L_H(y^{f_i^v})=2$ and $L_H(y^{f_i^v})$ contains $3$.
Hence, $\alpha_v(w_1^{f_i^v}) \neq 3$ and so (ii) holds.

We next prove (iii). If $\# L_H(y^{f_i^v})\neq 2$ (resp.\ $\# L_H(z^{f_i^v})\neq 2$),
then $L_H(y^{f_i^v})=\{0,1,2\}$ (resp.\ $L_H(z^{f_i^v})=\{0,1,2\}$) and $\alpha_v(w_1^{f_i^v})=3$ (resp.\  $\alpha_v(w_2^{f_i^v})=3$).
Hence (iii) holds.
If $\# L_H(y^{f_i^v})=2$ (resp.\ $\# L_H(z^{f_i^v})=2$), then $\alpha_v(w_2^{f_{i-1}^v}) \neq \alpha_v(w_1^{f_i^v})$
(resp. $\alpha_v(w_2^{f_i^v}) \neq \alpha_v(w_1^{f_{i+1}^v})$) and this together with (i) shows that (iii) holds.
\end{proof}

Let $\alpha^{\prime}$ be a map from $V(G)$ to $\{0,1,2,3,4\}$ as follows:
\begin{itemize}
\item $\alpha^{\prime}(v) = \alpha(v)$ for every $v \in V(H)$,
\item $\alpha^{\prime}(v) = 4$ for each vertex $v \in \{w_3^f \mid f \in F(G^{\prime})\} \cup V(G^{\prime})\setminus V(H)$, and
\item
$\alpha^{\prime}(x) = \alpha_{f_i^v}(x)$
for each vertex $v \in V(G^{\prime})\setminus V(H)$,
$1\leq i\leq m(v)$, 
and $x \in V(J_{f_i^v})\setminus \{w_3^{f_i^v}\}$,
where $\alpha_{f_i^v}$ is an $(\alpha_v(w_1^{f_i^v}),\alpha_v(w_2^{f_i^v}),\alpha^{\prime}(w_3^{f_i^v}))$-frozen $5$-coloring of $J_{f_i^v}$.
\if0
\item for each vertex $v \in V(G^{\prime})\setminus V(H_{\ell})$ and $1\leq i\leq m(v)$, 
\begin{itemize}
    \item 
    $\alpha^{\prime}(w_1^{f_i^v})=\alpha_v(w_1^{f_i^v})$,
    \item
    $\alpha^{\prime}(w_2^{f_i^v})=\alpha_v(w_2^{f_i^v})$,
    \item
    for every $x \in V(J_{f_i^v})\setminus \{w_1^{f_i^v},w_2^{f_i^v},w_3^{f_i^v}\}$,
$\alpha^{\prime}(x) = \alpha_{f_i^v}(x)$.
\end{itemize}
\fi
\end{itemize}
By Subclaim above, $\alpha^{\prime}$ is a restricted $5$-coloring with respect to $\alpha$.
\end{proof}

Let $\alpha^{\prime}$ and $\beta^{\prime}$ be restricted $5$-colorings of $G$ with respect to $\alpha$ and $\beta$, respectively.
By the condition (iv) of Lemma~\ref{forbidding}, for a vertex $v \in V(G^{\prime})\setminus V(H)$,
we have $\bigcup_{w \in N_G(v)}\alpha^{\prime}(w)=\bigcup_{w \in N_G(v)}\beta^{\prime}(w)=\{0,1,2,3\}$, i.e., 
all colors in $\{0,1,2,3\}$ appear in the neighbors of $v$ in the colorings $\alpha^{\prime}$ and $\beta^{\prime}$.
For $f \in F(G^{\prime})$, since the restrictions of $\alpha^{\prime}$ and $\beta^{\prime}$ to $J_f$ are frozen $5$-colorings,
we have $\bigcup_{w \in N_G(v)}\alpha^{\prime}(w)=\{0,1,2,3,4\}\setminus \{\alpha^{\prime}(v)\}$ and
$\bigcup_{w \in N_G(v)}\beta^{\prime}(w)=\{0,1,2,3,4\}\setminus \{\beta^{\prime}(v)\}$
for every vertex $v \in V(G)\setminus V(H)$.
Hence, $(H,L_H,\alpha,\beta)$ is a YES-instance of \Lrecolor if and only if $(G,\alpha^{\prime}$, $\beta^{\prime})$ is a YES-instance of \krecolor, and Theorem~\ref{thm:pspace} holds for $k=4$.

We prove the case $k > 4$ by the induction on $k$.
To this end, we introduce the suspension of a simplicial complex.
Let $K$ be a triangulation of the $d$-sphere.
Since the $(d+1)$-sphere is obtained by gluing two $(d+1)$-balls along their boundaries, which are the $d$-spheres,
the join of $K$ with additional two points gives a triangulation $S(K)$ of
the $(d+1)$-sphere,
which is called the \emph{suspension} of $K$ (\cite[Exercise in Section~8]{Mun84}).
Here, if $K$ is even, so is $S(K)$.
See \figurename~\ref{fig:suspension_example1}.
\begin{figure}
    \centering
    \includegraphics[scale=.5]{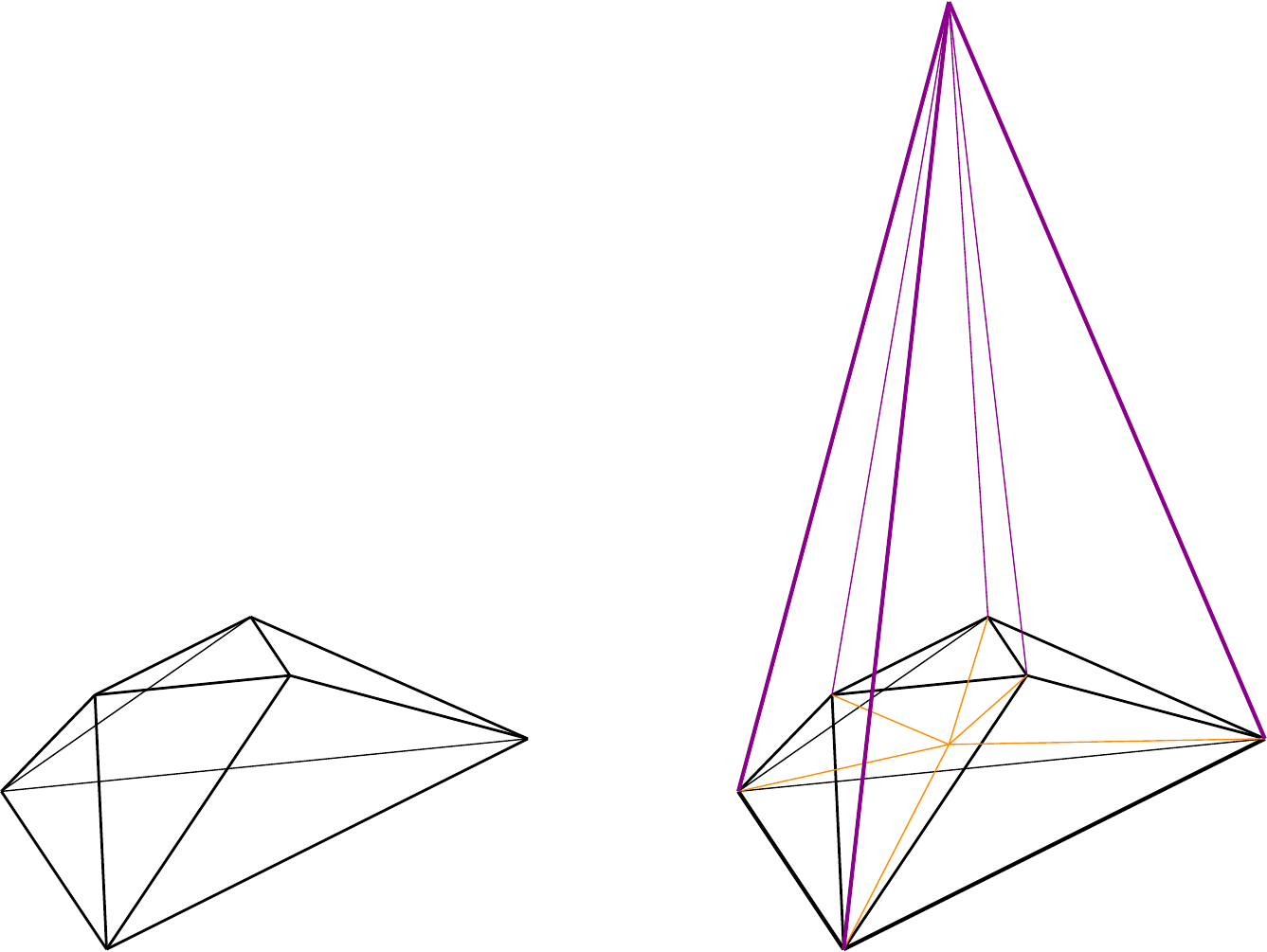}
    \caption{An example of the suspension of a triangulation of the $2$-sphere. (Left) A triangulation of the $2$-sphere. In this case, it is the boundary of an octahedron. (Right) The suspension of the left triangulation. We add a point in the interior of the sphere and connect the vertices by edges (shown in orange), and add another point in the exterior of the sphere and connect the vertices by edges (shown in purple). A $3$-simplex in the suspension is created as the join of a $2$-simplex (i.e., a face) of the triangulation and one of the added points.}
    \label{fig:suspension_example1}
\end{figure}

By the induction hypothesis, there is an instance
$(G_{k-1}, \alpha_{k-1}, \beta_{k-1})$ of \krecolor,
where $G_{k-1}$ is a $(k-2)$-colorable triangulation of the $(k-3)$-sphere,
such that
$(H,L_H,\alpha,\beta)$ is a YES-instance of \Lrecolor
if and only if 
$(G_{k-1}, \alpha_{k-1}, \beta_{k-1})$ is a YES-instance of \krecolor.
Let $G_k$ be the $1$-skeleton of the suspension of $G_{k-1}$.
Note that $G_k$ is an even triangulation of the $(k-2)$-sphere.
Let $\alpha_k$ and $\beta_k$ be the $(k+1)$-colorings of $G_k$ defined as follows:
\[
\alpha_k(v):=
\begin{cases}
\alpha_{k-1}(v) &\text{if $v \in V(G_{k-1})$},\\
k &\text{otherwise},
\end{cases} \qquad
\beta_k(v):=
\begin{cases}
\beta_{k-1}(v) &\text{if $v \in V(G_{k-1})$},\\
k &\text{otherwise}.
\end{cases}
\]
Let $x$ and $y$ be the vertices contained in $V(G_k)\setminus V(G_{k-1})$.
Then, $\bigcup_{w \in N_{G_k}(x)}\alpha_k(w)=\bigcup_{w \in N_{G_k}(x)}\beta_k(w)=\{0,1,\ldots,k-1\}$ and
$\bigcup_{w \in N_{G_k}(y)}\alpha_k(w)=\bigcup_{w \in N_{G_k}(y)}\beta_k(w)=\{0,1,\ldots,k-1\}$.
This implies that $(G_{k-1}, \alpha_{k-1}, \beta_{k-1})$ is a YES-instance
of \krecolor
if and only if $(G_k, \alpha_k, \beta_k)$ is a YES-instance
of \kplusrecolor,
and so
Theorem~\ref{thm:equivalent} holds.
\end{proof}

\section{Concluding remarks}\label{sec:conclusion}
In this paper,
we obtain the following results.
\begin{itemize}
    \item[(i)]
    For a $3$-colorable triangulation $G$ of the $2$-sphere,
    the balanced condition (B) characterizes the $3$-coloring component of $\R_4(G)$ (\Cref{thm:main}).
    More generally, for a $(k-1)$-colorable triangulation $G$ of the $(k-2)$-sphere,
    the equation~\eqref{eq:highdim B},
    which is
    the high-dimensional version of the balanced condition (B), characterizes the $(k-1)$-coloring component of $\R_k(G)$ (\Cref{thm:highdim_case}).
    \item[(ii)]
    For a $3$-colorable triangulation $G$ of the $2$-sphere,
    the $4$-coloring reconfiguration graph $\R_4(G)$ is connected if and only if every $4$-connected piece of $G$ is isomorphic to the octahedral graph (\Cref{thm:conn}).
    \item[(iii)]
    For a $(k-1)$-colorable triangulation $G$ of the $(k-2)$-sphere,
    \kplusrecolor is PSPACE-complete (\Cref{thm:pspace}).
\end{itemize}

We conclude this paper with several open problems related to each of the results (i)--(iii).

Related to (i), it is natural to ask the characterization for two $4$-colorings of a $3$-colorable triangulation $G$ of the $2$-sphere
to belong to the same connected component of $\R_4(G)$,
which is not necessarily the $3$-coloring component.
As we have seen in \Cref{subsec:2dim_case},
the equation~\eqref{eq:necessary} is a necessary condition for two $4$-colorings $\alpha, \alpha'$ to be in the same connected component.
However, it is not sufficient in general, even though it is sufficient for the $3$-coloring component.
Figure~\ref{fig:ozeki} illustrates two $4$-colorings that satisfy the equation~\eqref{eq:necessary} but do not belong to the same component.
\begin{figure}
\centering
\includegraphics[scale=0.7]{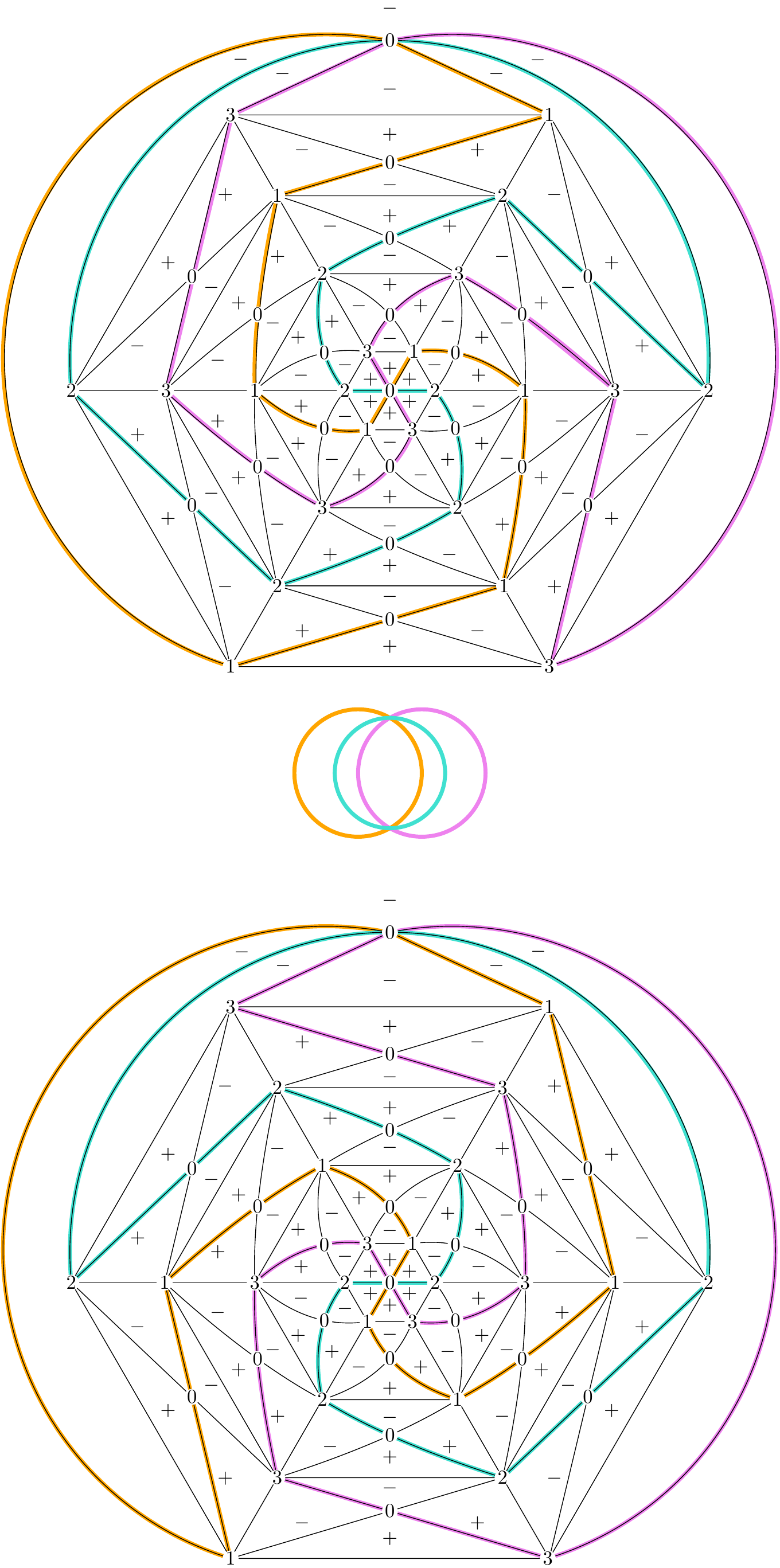}
\caption{An example showing that the equation~\eqref{eq:necessary} cannot be a characterization for two $4$-colorings of a $3$-colorable triangulation $G$ of the $2$-sphere
to belong to the same connected component of $\R_4(G)$.}
\label{fig:ozeki}
\end{figure}
We can obtain a more sophisticated necessary condition such that the 
arrangement of the Kempe-chains consisting of nonsingular edges are ``topologically equivalent'' in some sense,
but Figure~\ref{fig:ozeki} also tells us that it is still not sufficient.

The reason why such a ``topological condition'' cannot be a characterization
is that a graph may not be fine enough to approximate an ``isotopy'' of the graph in $S^2$, where an isotopy is a continuous deformation used in topology.
For example, no vertex is recolorable in the graphs in Figure~\ref{fig:ozeki}.
In order to obtain a necessary and sufficient condition,
we are required to find a ``combinatorial condition'' in addition to a ``topological condition,''
which is an interesting open problem.

A high-dimensional generalization of (ii) is also a natural problem:
In what $(k-1)$-colorable triangulation $G$ of the $(k-2)$-sphere,
$\R_k(G)$ is connected?
In our case ($k = 4$), the generating theorem (\Cref{thm:Matsu}) for $4$-connected $3$-colorable triangulation of the $2$-sphere by Matsumoto and Nakamoto~\cite{Matsumoto}
plays an important role in our proof.
Unfortunately, a high-dimensional generalization of the generating theorem is not known, 
which could be an interesting research question in its own right.

It is also natural to consider a characterization of $3$-colorable triangulations $G$ of the $2$-sphere such that $\mathcal{R}_k(G)$ is connected
and the computational complexity of \conn with $5\leq k\leq 6$,
where we recall that $\mathcal{R}_k(G)$ is always connected if $k \geq 7$~\cite{CHJ}.

We do not know the computational complexity of \sixrecolor for $3$-colorable triangulations of the $2$-sphere,
which is an open problem related to the result (iii).
We would expect the existence of a $3$-colorable triangulation $J'$ of the $2$-sphere that has a frozen $6$-coloring.
If one exists, we are able to show that \sixrecolor for $3$-colorable triangulations of the $2$-sphere is PSPACE-complete by an argument similar to the proof of \Cref{thm:equivalent} by replaceing $J$ with $J'$.
Moreover, if \sixrecolor for $3$-colorable triangulations of the $2$-sphere is PSPACE-complete,
then we are able to show that for $k\geq 4$, the problem \textsc{$(k+2)$-Recoloring} for $(k-1)$-colorable triangulation of the $(k-2)$-sphere is PSPACE-complete.

\section*{Acknowledgment}
This work was supported by JSPS KAKENHI Grant Numbers JP20H05793, JP20H05795, JP20K11670, JP20K14317, JP20K23323, JP22K17854, JP22K13956.

\appendix
\section{Proof of \Cref{lemma:small_tri}}\label{sec:ap:proof}

For simplicity of the argument,
we call a $4$-coloring that is not unbalanced \emph{balanced}.

For (1), see \figurename~\ref{fig:proof_small_tri_i}.
Since these two $4$-colorings exhaust all the cases and both of them are balanced, the octahedral graph admits no unbalanced $4$-coloring.

\begin{figure}
    \centering
    \includegraphics{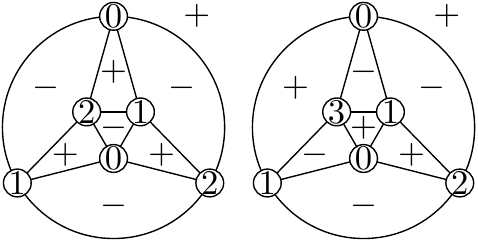}
    \caption{Proof of \Cref{lemma:small_tri}~(1): Every $4$-coloring of the octahedral graph is balanced.}
    \label{fig:proof_small_tri_i}
\end{figure}

For (2), see \figurename~\ref{fig:proof_small_tri_ii}.
There are two nonisomorphic triangulations from which the octahedral graph is obtained by a $4$-contraction.
The left one is not $3$-colorable since there are odd-degree vertices.
The right one is isomorphic to the double wheel of order $8$, where a cycle of length $6$ is highlighted with a color.

\begin{figure}
    \centering
    \includegraphics[width=.9\textwidth]{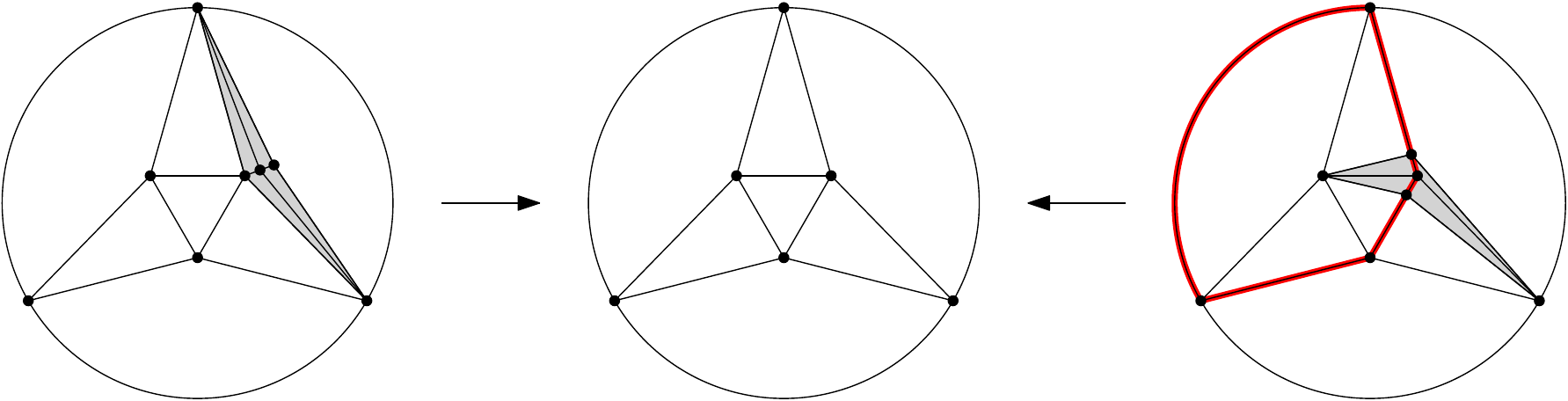}
    \caption{Proof of \Cref{lemma:small_tri}~(2): Two triangulations that result in the octahedral graph after a $4$-contraction.}
    \label{fig:proof_small_tri_ii}
\end{figure}

For~(3), see \figurename~\ref{fig:proof_small_tri_iii}.
This shows an unbalanced $4$-coloring of the double wheel of order $8$.

\begin{figure}
    \centering
    \includegraphics{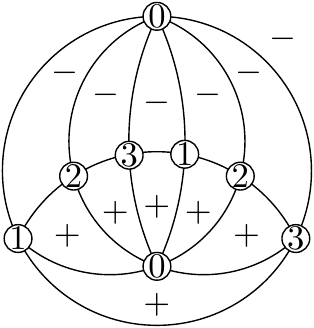}
    \caption{Proof of \Cref{lemma:small_tri}~(3): The double wheel of order $8$ with an unbalanced $4$-coloring.}
    \label{fig:proof_small_tri_iii}
\end{figure}

For~(4), see \figurename~\ref{fig:proof_small_tri_iv}.
There are two nonisomorphic triangulations from which the octahedral graph is obtained by a twin-contraction.
The left one is not $3$-colorable since there are odd-degree vertices.
The right one contains a separating triangle as highlighted with a color.

\begin{figure}
    \centering
    \includegraphics[width=0.9\textwidth]{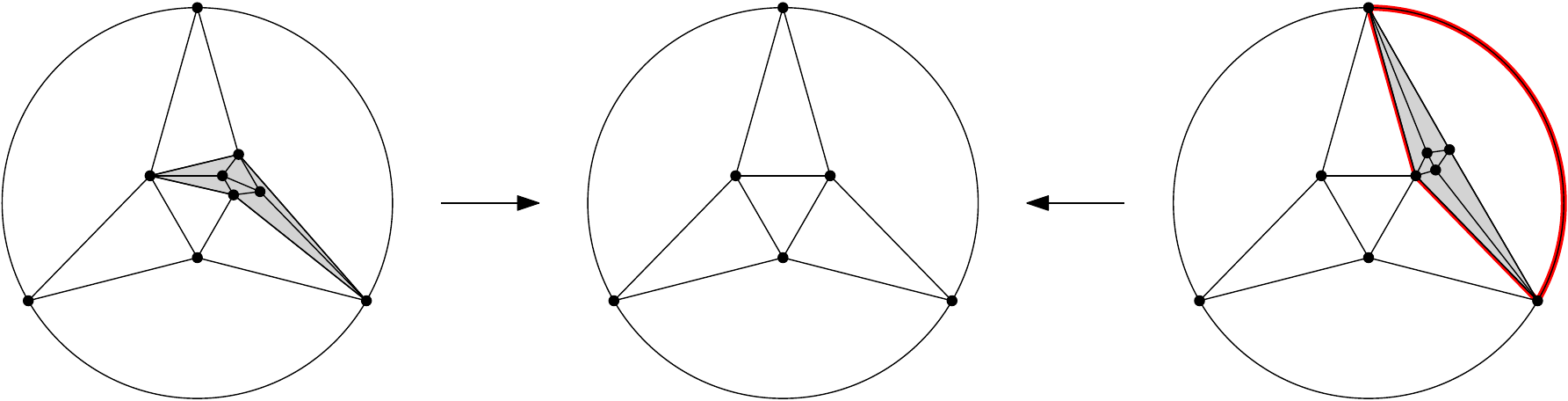}
    \caption{Proof of \Cref{lemma:small_tri}~(4): Two triangulations that result in the octahedral graph after a twin-contraction.}
    \label{fig:proof_small_tri_iv}
\end{figure}

\section{Brief review of homology groups}
\label{sec:homology}
For the convenience of the reader, we review the definition of homology groups with $\Z/2\Z$-coefficients.
See \cite[Section~10]{Mun84} for more details.
Let $K$ be a simplicial complex and let $C_q(K)$ denote the $\Z/2\Z$-vector space generated by the $q$-simplices in $K$.
For a $q$-simplex $\sigma^q=[v_0v_1\cdots v_{q}]\in K$, define $\partial\sigma^q \in C_{q-1}(K)$ by
\[
\partial\sigma^q := \sum_{i=0}^q [v_0\cdots v_{i-1} v_{i+1}\cdots v_q],
\]
and extend it to a linear map $\partial_q\colon C_{q}(K)\to C_{q-1}(K)$.
Note that we need suitable signs in the definition of $\partial\sigma^q$ in the case of $\Z$-coefficients.
One can check that $\partial_q\circ\partial_{q+1}$ is the zero map, namely $\Im\partial_{q+1} \subseteq \Ker\partial_q$.
We now define the \emph{$q$th homology group} $H_q(K;\Z/2\Z)$ of $K$ with $\Z/2\Z$-coefficients by $H_q(K;\Z/2\Z):=\Ker\partial_q/\Im\partial_{q+1}$.
Here, elements in $C_q$, $\Ker\partial_q$, and $\Im\partial_{q+1}$ are called \emph{$q$-chains}, \emph{$q$-cycles}, and \emph{$q$-boundaries}, respectively.

Let $M$ be a manifold with triangulation $K$.
We define the \emph{$q$th homology group} of $M$ by $H_q(M;\Z/2\Z):=H_q(K;\Z/2\Z)$.
This is well-defined since $H_q(K;\Z/2\Z)$ is known to be canonically isomorphic to $H_q(K';\Z/2\Z)$ for another triangulation $K'$.
For instance, $H_q(S^d;\Z/2\Z)\cong \Z/2\Z$ if $q=0,d$ and $H_q(S^d;\Z/2\Z)=\{0\}$ otherwise.

Let $M$ be a closed $d$-manifold and let $C$ be a set of $(d-1)$-simplices.
Note that $C$ can also be regarded as a $(d-1)$-chain.
Then, $C$ is a $(d-1)$-boundary if and only if $M\setminus|C|$ admits a checkerboard coloring.
Indeed, if $C \in \Im\partial_q$, then there is $x \in C_q(M)$ satisfying $\partial_q(x)=C$, and thus we obtain a checkerboard coloring by assigning black to the $d$-simplices corresponding to $x$.
Conversely, when $M\setminus|C|$ admits a checkerboard coloring, we obtain a $d$-chain $x$ by collecting the black $d$-simplices, and then $\partial_q(x)=C$.

Let us exhibit a $3$-manifold satisfying the assumption of \Cref{thm:highdim_case} except $S^3$ and a $4$-colorable triangulation of the manifold as illustrated in Figure~\ref{fig:Poincare}.
\begin{figure}
\centering
\includegraphics{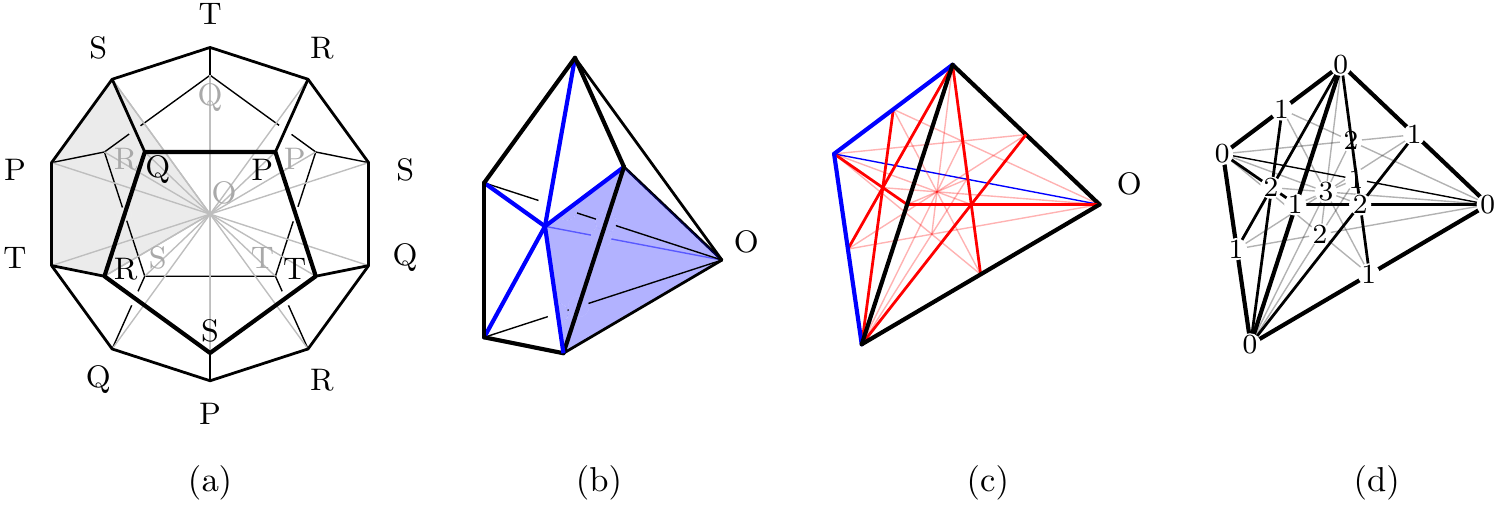}
\caption{The Poincar\'e homology $3$-sphere $P$ (\cite[p.~117]{KiSc79}). (a) To construct the Poincar\'e homology $3$-sphere, take the regular dodecahedron, and identify the antipodal pentagons with $36^{\circ}$ rotation. (b) To produce a (singular) triangulation of $P$, first subdivide each pentagonal face with one extra point in the middle, and take the join with the origin $\mathrm{O}$. (c) To make it $4$-colorable, take the barycentric subdivision. (d) We can see that this gives a $4$-colorable triangulation of $P$.}
\label{fig:Poincare}
\end{figure}

\bibliographystyle{abbrvurl}

\end{document}